\documentclass[draft,reqno]{amsproc}
\usepackage{amsmath,amssymb}
\usepackage{euscript} 
\usepackage{mathrsfs} 
\usepackage{color,pifont}
\usepackage{graphicx}

\newcommand{\vertiii}[1]{{\left\vert\kern-0.25ex\left\vert\kern-0.25ex\left\vert #1
\right\vert\kern-0.25ex\right\vert\kern-0.25ex\right\vert}}

\newcommand{\isii}[2]{{\left\langle\kern-0.4ex\left\langle
#1, #2 \right\rangle\kern-0.4ex\right\rangle}}


\makeatletter
\@namedef{subjclassname@2010}{%
\textup{2010} Mathematics Subject Classification}
\makeatother

\numberwithin{equation}{subsection}

\newtheorem{thm}{Theorem}[subsection]
\newtheorem{cor}[thm]{Corollary}
\newtheorem{lem}[thm]{Lemma}
\newtheorem{pro}[thm]{Proposition}

\newtheorem*{thm*}{Theorem}
\newtheorem{opq}[thm]{Problem}
\newtheorem*{opq*}{Problem}

\newtheorem*{conj*}{Problem}

\newtheorem{athm}{Theorem}[section]

\newtheorem{alem}[athm]{Lemma}

\theoremstyle{remark}
\newtheorem{rem}[thm]{Remark}

\theoremstyle{definition}
\newtheorem{exa}[thm]{Example}

\newtheorem{dfn}[thm]{Definition}

\DeclareMathOperator{\lin}{\mbox{span}}
\DeclareMathOperator{\E}{e}

\newcommand*{\ascr}{\mathscr{A}}

\newcommand*{\bfrak}{\mathfrak{b}}
\newcommand*{\borel}[1]{{\mathfrak B}(#1)}
\newcommand*{\bscr}{\mathscr{B}}
\newcommand*{\card}[1]{\mathrm{card}(#1)}
\newcommand*{\cbb}{\mathbb C}
\newcommand*{\cfrak}{\mathfrak{c}}
\newcommand*{\D}{\mathrm{d}}
\newcommand*{\dbb}{{\mathbb D}}

\newcommand*{\ee}{\mathcal E}

\newcommand*{\Ge}{\geqslant}
\newcommand*{\gammab}{\boldsymbol{\gamma}}
\newcommand*{\hh}{\mathcal H}

\newcommand*{\I}{{\mathrm i\hspace{.1ex}}}

\newcommand*{\is}[2]{\langle#1,#2\rangle}
\newcommand*{\jd}[1]{\mathscr N(#1)}

\newcommand*{\kk}{\mathcal K}
\newcommand*{\lambdab}{\boldsymbol{\lambda}}
\newcommand*{\Le}{\leqslant}
\newcommand*{\mcal}{\mathcal M}

\newcommand*{\nbb}{\mathbb N}

\newcommand*{\nn}{\mathcal N}

\newcommand*{\omegab}{\boldsymbol{\omega}}
\newcommand*{\ogr}[1]{\boldsymbol B(#1)}
\newcommand*{\ob}[1]{{\mathscr R}(#1)}
\newcommand*{\rbb}{\mathbb R}

\newcommand*{\supp}[1]{\mathrm{supp}(#1)}
\newcommand*{\tbb}{{\mathbb T}}

\newcommand*{\wlam}{W_{\lambdab}}

\newcommand*{\zbb}{\mathbb Z}


   \begin{document}
   \title[When is a CPD weighted shift similar to a subnormal
operator?] {When is a CPD weighted shift similar to a
subnormal operator?}
   \author[Z.\ J.\ Jab{\l}o\'nski]{Zenon Jan
Jab{\l}o\'nski}
   \address{Instytut Matematyki,
Uniwersytet Jagiello\'nski, ul.\ \L ojasiewicza 6,
PL-30348 Kra\-k\'ow, Poland}
\email{Zenon.Jablonski@im.uj.edu.pl}
   \author[I.\ B.\ Jung]{Il Bong Jung}
   \address{Department of Mathematics, Kyungpook National University,
Da\-egu 41566, Korea}
   \email{ibjung@knu.ac.kr}
   \author[J.\ Stochel]{Jan Stochel}
\address{Instytut Matematyki, Uniwersytet
Jagiello\'nski, ul.\ \L ojasiewicza 6, PL-30348
Kra\-k\'ow, Poland} \email{Jan.Stochel@im.uj.edu.pl}
   \thanks{The research of the first and third
authors was supported by the National Science Center
(NCN) Grant OPUS No.\ DEC-2021/43/B/ST1/01651. The
research of the second author was supported by Basic
Science Research Program through the National Research
Foundation of Korea (NRF) funded by the Ministry of
Education (NRF-2021R111A1A01043569).}

   \subjclass[2020]{Primary 47B20, 47B37;
Secondary 43A35}

   \keywords{Weighted shift operator,
conditionally positive definite operator,
subnormal operator, similarity, quasi-affine
transform}

\maketitle

   \begin{abstract}
We prove that a CPD unilateral weighted
shift $\wlam$ of type III is a quasi-affine
transform of the operator $M_z$ of
multiplication by the independent variable
on the $L^2(\rho)$-closure of analytic
complex polynomials on the complex plane,
where $\rho$ is a measure precisely
determined by $\wlam$. By using this model,
we provide necessary and sufficient
conditions for similarity of $\wlam$ to
$M_z$. Necessary conditions for a CPD
operator to be similar to a subnormal one
are given. A variety of concrete classes of
non-subnormal CPD unilateral weighted shifts
similar to subnormal operators are
established.
   \end{abstract}
   \section{Prerequisites}
   \subsection{Introduction}
The similarity problems for Hilbert space operators
have attracted the attention of many researchers over
many decades and have resulted in many non-trivial
theorems, not to mention one of the earliest, namely
the Sz.-Nagy similarity theorem \cite{Sz-N47}, saying
that an invertible operator is similar to a unitary
operator if and only if both the operator itself and
its inverse are power bounded. Another famous
similarity problem, posed by Halmos
\cite[Problem~6]{hal70} and solved negatively by
Pisier \cite{Pi97}, concerns whether a polynomially
bounded operator is similar to a contraction. Yet
another problem, called the Kadison similarity problem
\cite{Kad55}, asking whether a bounded homomorphism of
a $C^*$-algebra into the algebra of operators on a
Hilbert space is similar to a $*$-homomorphism, has
remained unsolved for almost $70$ years.

The aforementioned similarity problems do indeed apply
to operator-valued functions on algebraic structures.
The two questions described below can be seen more as
similarity problems for single operators. The first,
stated by Halmos \cite[Problem~2]{hal70}, asks whether
every part of a weighted shift is similar to a
weighted shift. The second, in the spirit of our
paper, is whether similarity of two normal operators
implies their unitary equivalence. An affirmative
solution was given by Putnam \cite[Theorem~I]{Put51}.
D. E. Sarason showed that this is no longer true for
subnormal operators, namely he constructed an explicit
example of two similar subnormal unilateral weighted
shifts that are not unitary equivalent (see
\cite[Problem~199]{Hal82}).

Every (injective) unilateral weighted shift
is unitarily equivalent to the operator of
multiplication by the independent variable
on some Hilbert space of formal series (see
\cite[Sec.~3]{shi74}). It is known that
every subnormal unilateral weighted shift is
unitarily equivalent to the operator $M_z$
of multiplication by the independent
variable $z$ on $H^{2}(\mu)$, where $\mu$ is
a finite Borel measure on a compact subset
of the complex plane $\cbb$ and $H^{2}(\mu)$
is the $L^2(\mu)$-closure of analytic
complex polynomials (see
\cite[Theorem~II.6.10]{Con91}). This
together with \cite[Theorem~4.1]{Sto91}
implies that every contractive conditionally
positive definite (CPD) unilateral weighted
shift is unitarily equivalent to $M_z$ on
$H^{2}(\mu)$. In general, a CPD unilateral
weighted shift may not be unitarily
equivalent to $M_z$ on $H^{2}(\mu)$. Worse,
it may not even be similar to such an
operator (see, e.g., Theorem~\ref{feqas} and
Proposition~\ref{nydseq}). A natural
question then arises: when is a CPD
unilateral weighted shift similar to the
operator of multiplication by $z$ on some
$H^{2}(\mu)$? In fact, according to
Lemma~\ref{incsq}, this question is
equivalent to the following problem:
   \begin{opq} \label{s2so}
When is a CPD unilateral weighted shift
similar to a subnormal operator?
   \end{opq}
In this paper, we will analyze
Problem~\ref{s2so}, namely, we will look for
necessary and/or sufficient conditions,
written in terms of the scalar representing
triplet, for a CPD unilateral weighted shift
to be similar to a subnormal operator. It
can be seen that CPD operators are cousins
of subnormal operators in the sense of
harmonic analysis on the additive semigroup
$\zbb_+$ of all nonnegative integers, that
is, a Hilbert space operator $T$ is
subnormal (resp., CPD) if and only if the
sequence $\{\|T^n h\|^2\}_{n=0}^{\infty}$ is
positive definite (resp., conditionally
positive definite) on $\zbb_+$ for every
vector $h$ (see \cite{Con91,Ja-Ju-St22}). To
better understand the similarity problem, we
divide the class of all CPD unilateral
weighted shifts into three disjoint classes
the members of which are called of types I,
II and III, respectively (see
Definition~\ref{kgsw}). CPD unilateral
weighted shifts of class I are exactly
$2$-isometries.

The following theorem is the most general
result obtained by the authors in this area.
On this occasion, we refer the reader to
\eqref{b2t}, \eqref{wtsht} and
Theorem~\ref{Ber-G-W} for the definitions of
$\bscr_2(\cdot)$, $\wlam$ and ``Berger
measure'', respectively. Recall that an
operator from a Hilbert space $\hh$ into a
Hilbert space $\kk$ is called {\em
quasi-invertible} if it is injective and has
dense range.
   \begin{thm} \label{mainth}
Let $\wlam\in \ogr{\ell^2}$ be a CPD
unilateral weighted shift with weights
$\lambdab=\{\lambda_n\}_{n=0}^{\infty}$ that
is not of type~I. Then there exists a
nonzero, compactly supported, finite,
rotation-invariant Borel measure $\rho$ on
$\cbb$ and an operator $X\in \ogr{\ell^2,
H^2(\rho)}$ with dense range such~that
   \begin{enumerate}
   \item[(i)] $X \wlam= M_z X$,
   \item[(ii)] $X^*X=\bscr_2(\wlam)$,
   \item[(iii)] $X$ is
quasi-invertible if and only if $\wlam$ is
of type~III,
   \item[(iv)] if $\wlam$ is of type~III, then $M_z
\in \ogr{H^2(\rho)}$ is unitarily equivalent
to a subnormal unilateral weighted shift
with the Berger measure
$\frac{1}{\rho(\cbb)} \rho\circ \phi^{-1}$,
where $\phi(z)=|z|^2$ for $z\in \cbb$ and
$\rho\circ
\phi^{-1}(\varDelta)=\rho(\phi^{-1}(\varDelta))$
for any Borel subset $\varDelta$ of the
closed half-line $[0,\infty)$,
   \item[(v)] $X$ has a bounded inverse if
and only if there exists $\varepsilon > 0$ such that
$\bscr_2(\wlam) \Ge \varepsilon I$, which is
equivalent to
   \begin{align} \label{bydqa}
\beta_n=\beta_n(\wlam):= 1-2\lambda_n^2 + \lambda_n^2
\lambda_{n+1}^2 \Ge \varepsilon, \quad n\in \zbb_+;
   \end{align}
if this is the case, then $\wlam$ is of type~III and
is similar to a subnormal unilateral weighted shift
with the Berger measure $\frac{1}{\rho(\cbb)}
\rho\circ \phi^{-1}$.
   \end{enumerate}
   \end{thm}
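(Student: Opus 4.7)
The plan is to construct the model space $H^2(\rho)$ and the intertwiner $X$ directly from scalar moment data extracted from the CPD hypothesis, and then to read off (i)--(v) from the construction.

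First I would translate the CPD hypothesis into a Stieltjes moment problem. With $\gamma_n = \prod_{k<n}\lambda_k^2$ (so $\gamma_0 = 1$), the algebraic identity $\gamma_{n+2} - 2\gamma_{n+1} + \gamma_n = \gamma_n\beta_n$ together with CPD of $\wlam$ makes $\{\gamma_n\beta_n\}_{n\Ge 0}$ a positive definite sequence on $\zbb_+$ with nonnegative terms. Boundedness of $\wlam$ forces the growth bound $\gamma_n\beta_n = O(\|\wlam\|^{2n})$, which localizes the representing measure on a bounded subset of $[0,\infty)$; hence there is a unique compactly supported finite Borel measure $\eta$ on $[0,\infty)$ with $\int t^n\,d\eta(t) = \gamma_n\beta_n$, and $\eta\ne 0$ since $\wlam$ is not of type~I. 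Let $\rho$ be the rotation-invariant lift of $\eta$ to $\cbb$ given by
\[
\int_\cbb f\,d\rho \;=\; \int_{[0,\infty)} \frac{1}{2\pi}\int_0^{2\pi} f(\sqrt{t}\,e^{\I\theta})\,d\theta\,d\eta(t).
\]
Then $\rho$ is nonzero, compactly supported, finite, and rotation-invariant on $\cbb$, with $\rho\circ\phi^{-1}=\eta$, $\int|z|^{2n}\,d\rho = \gamma_n\beta_n$, and $\rho(\cbb)=\beta_0$.

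Next I would define $X\colon\ell^2 \to H^2(\rho)$ as the linear extension of $X e_n := \gamma_n^{-1/2} z^n$. Rotation invariance of $\rho$ makes $\{z^n\}_{n\Ge 0}$ pairwise orthogonal in $L^2(\rho)$ with $\|z^n\|^2 = \gamma_n\beta_n$, so $\{X e_n\}$ is orthogonal with $\|X e_n\|^2 = \beta_n$. Consequently $X$ is bounded (since $\sup_n\beta_n<\infty$ by boundedness of $\wlam$), has dense range because polynomials span $H^2(\rho)$ by definition, and $X^*X$ is the diagonal operator with entries $\beta_n$, which agrees with $\bscr_2(\wlam)$ by \eqref{b2t}, giving (ii). The one-line calculation $M_z X e_n = \gamma_n^{-1/2} z^{n+1} = \lambda_n\gamma_{n+1}^{-1/2} z^{n+1} = X\wlam e_n$ yields (i).

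For (iii), orthogonality of $\{X e_n\}$ gives $\ker X = \overline{\lin}\{e_n : \beta_n=0\}$, so $X$ is quasi-invertible iff every $\beta_n$ is strictly positive, which is precisely the characterization of type~III by Definition~\ref{kgsw}. For (iv), $M_z$ on $H^2(\rho)$ is subnormal as the restriction to the invariant subspace $H^2(\rho)$ of the normal multiplication operator on $L^2(\rho)$; rotation invariance of $\rho$ turns it into a weighted shift in the orthonormal basis $\{z^n/\|z^n\|\}$, and its Berger moments $\rho(\cbb)^{-1}\int|z|^{2n}\,d\rho$ coincide, via the change of variables $\phi(z)=|z|^2$, with the moments of $\rho(\cbb)^{-1}\rho\circ\phi^{-1}$. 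Finally (v) follows by polar decomposition: $X$ has a bounded inverse iff $X^*X=\bscr_2(\wlam)\Ge\varepsilon I$, equivalently $\inf_n\beta_n\Ge\varepsilon$, which is \eqref{bydqa}; in this case all $\beta_n>0$ places $\wlam$ in type~III, and $\wlam = X^{-1}M_z X$ is the claimed similarity.

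The principal obstacle I anticipate is the rigorous Stieltjes extraction in the first step: one must verify that the operator-level CPD hypothesis really forces $\{\gamma_n\beta_n\}$ to be positive definite as a scalar sequence on $\zbb_+$ (not something weaker), and that boundedness of $\wlam$ localizes the representing measure on a compact subset of $[0,\infty)$. This is precisely where the ``scalar representing triplet'' of $\wlam$ enters, and it is also where the classification of types I/II/III from Definition~\ref{kgsw} must be matched with the sign pattern of the $\beta_n$. Once $\rho$ is produced, the remaining verifications are routine orthogonality computations against a rotation-invariant measure.
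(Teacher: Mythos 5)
Your proposal is correct and follows essentially the same route as the paper: your measure is exactly the paper's $\rho$ (there the paper takes $\eta=\nu+2\cfrak\delta_1$ directly from the scalar representing triplet via $(\triangle^2\hat{\lambdab})_n=2\cfrak+\int_{\rbb_+} x^n\,\D\nu(x)$, then pushes forward by $\sqrt{\cdot}$ and rotates), and your map $e_n\mapsto\gamma_n^{-1/2}z^n$ is literally the paper's $X=UJ$ written out on the basis, the paper merely packaging it through a semi-inner-product/quotient/completion construction. The one step to tighten is the extraction of $\eta$: positive definiteness of a scalar sequence plus a growth bound yields a compactly supported representing measure on $\rbb$, not automatically on $[0,\infty)$, so you should read off $\eta=\nu+2\cfrak\delta_1$ from Theorem~\ref{cpdws} and \eqref{del2} rather than from an abstract Hamburger/Stieltjes argument; likewise the equivalence in (iii) between ``all $\beta_n>0$'' and type~III is not Definition~\ref{kgsw} verbatim, but it follows in one line from $\gamma_n\beta_n=\int t^n\,\D\eta(t)$, since $\supp{\eta}\nsubseteq\{0\}$ exactly when these integrals are positive for $n\Ge 1$ (and $\beta_0=\eta(\rbb_+)>0$ is automatic once $\wlam$ is not of type~I).
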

Observe that the condition $\bscr_2(T) \Ge
\varepsilon I$ is not necessary for a CPD
unilateral weighted shift $\wlam$ to be
similar to a subnormal operator, e.g., an
isometry $T$ being subnormal is CPD and
$\bscr_2(T) = 0$. Necessary conditions for a
CPD operator are given in
Subsection~\ref{Sec.2.1} (see
Theorem~\ref{feqas}). As a consequence, it
is proved that a non-isometric $3$-isometry
is not similar to any subnormal operator
(see Corollary~\ref{ns3ic}). The proof of
Theorem~\ref{mainth} is provided in
Subsection~\ref{Sec.2.2}. In this subsection
we also show that non-subnormal CPD weighted
shifts of types I and II are not similar to
subnormal operators (see
Proposition~\ref{nydseq}). In fact, CPD
unilateral weighted shifts of types I and II
satisfy the following dichotomy property:
they are either subnormal or not similar to
a subnormal operator. This dichotomy is no
longer valid for unilateral weighted shifts
of type III (see Theorems~\ref{kdwq} and
\ref{ineqsuf}). As shown in
Proposition~\ref{alevy}, in contrast to
Theorem~\ref{mainth}, a single non-subnormal
CPD unilateral weighted shift is a
quasi-affine transform of a huge number of
subnormal unilateral weighted shifts.
Subsection~\ref{Sec.2.3} provides criteria
for a CPD unilateral weighted shift to be of
type III. In fact, we give a model for CPD
unilateral weighted shifts of types I and II
(see Theorem~\ref{bacyt}). Subnormality of
the model unilateral weighted shift is
characterized in Corollary~\ref{abWn}. In
Subsections~\ref{Sec.3.1} and \ref{Sec.3.2}
we present several concrete classes of
non-subnormal CPD unilateral weighted shifts
that are similar to subnormal operators (see
Theorems~\ref{kdwq} and \ref{ineqsuf}). The
paper concludes with an appendix
characterizing when one unilateral weighted
shift is a quasi-affine transform of
another, and showing that the
quasi-similarity and similarity relations
coincide for unilateral weighted shifts.
   \subsection{A little harmonic analysis}
Let $\rbb$ and $\cbb$ stand for the fields of real and
complex numbers, respectively, and let $\rbb_+=\{x \in
\rbb \colon x \Ge 0\}$, $\dbb=\{z\in \cbb\colon
|z|<1\}$ and $\tbb=\{z\in \cbb\colon |z|=1\}$. Denote
by $\zbb_+$ and $\nbb$ the sets of nonnegative and
positive integers respectively. We write
$\borel{\varOmega}$ for the $\sigma$-algebra of Borel
subsets of a topological space $\varOmega$. Unless
otherwise stated, all scalar measures we consider in
this paper are assumed to be positive. We call a
finite Borel measure $\rho$ on $\cbb$ {\em
rotation-invariant} if $\rho(\E^{\I \varphi}
\varDelta) = \rho(\varDelta)$ for every $\varDelta\in
\borel{\cbb}$ and every $\varphi\in [0,2\pi)$. The
closed support of a finite Borel measure $\mu$ on
$\cbb$ is denoted by $\supp{\mu}$. We write $\delta_t$
for the Borel probability measure on $\rbb$
concentrated at the point $t\in \rbb$. Let $\cbb[z]$
stand for the ring of all polynomials in complex
variable $z$ with complex coefficients. We regard them
as complex functions on $\cbb$.

We now recall some basic facts from harmonic analysis
on semigroups (see \cite{B-C-R}; see also
\cite{Ja-Ju-St22}). Let $\gammab =
\{\gamma_n\}_{n=0}^{\infty}$ be a sequence of real
numbers. We say that $\gammab$ is {\em positive
definite} ({\em PD} for brevity) if
   \begin{align} \label{wicher}
\sum_{i,j=0}^k \gamma_{i+j} \lambda_i \bar\lambda_j
\Ge 0,
   \end{align}
for all finite sequences $\lambda_0, \ldots, \lambda_k
\in \cbb$. If the inequality \eqref{wicher} holds for
all finite sequences $\lambda_0, \ldots, \lambda_k \in
\cbb$ such that $\sum_{j=0}^k \lambda_j=0$, then we
call $\gammab$ {\em conditionally positive definite}
({\em CPD} for brevity). Obviously, PD sequences are
CPD but not conversely. The discrete differentiation
transformation $\triangle\colon \rbb^{\zbb_+} \to
\rbb^{\zbb_+}$ is given by
   \begin{align*}
(\triangle \gammab)_n = \gamma_{n+1} - \gamma_n, \quad
n\in \zbb_+, \, \gammab = \{\gamma_n\}_{n=0}^{\infty}
\in \rbb^{\zbb_+}.
   \end{align*}
We denote by $\triangle^k$ the $k$th composition power
of $\triangle$. Given $n\in \zbb_+$, we define the
polynomial $Q_n$ in real variable $x$ by
   \begin{align} \label{klaud}
Q_n(x) =
   \begin{cases}
0 & \text{if } n=0,1,
   \\
\sum_{j=0}^{n-2} (n -j -1) x^j & \text{if } n\Ge 2,
   \end{cases}
\quad x \in \rbb.
   \end{align}
Then $Q_n$ takes the following explicit form:
   \begin{align} \label{rnx-1}
Q_n(x) & = \frac{x^n-1 - n (x-1)}{(x-1)^2}, \quad n
\in \zbb_+, \, x\in \rbb\setminus \{1\}.
   \end{align}
The polynomials $Q_n$ have the following properties:
   \begin{align} \label{rnx-0}
Q_{n+1}(x) & = x Q_n(x) + n, \quad n \in \zbb_+, \,
x\in \rbb,
   \\ \label{del2}
(\triangle^2 Q_{(\cdot)}(x))_n & = x^n,
\quad n\in \zbb_+, \, x\in \rbb,
   \end{align}
where $\triangle^j Q_{(\cdot)}(x)$ denotes the action
of the transformation $\triangle^j$ on the sequence
$\{Q_{n}(x)\}_{n=0}^{\infty}$ for $x\in \rbb$ and
$j\in \nbb$.

CPD sequences of (at most) exponential growth can be
characterized as follows. The formula \eqref{optwm} is
derived from the L\'{e}vy-Khinchin formula for
negative definite functions on $*$-semigroups (see
\cite[Theorems~4.3.19 and 6.2.6]{B-C-R}).
   \begin{thm}[\mbox{\cite[Theorem~2.2.5]{Ja-Ju-St22}}]
A sequence $\{\gamma_n\}_{n=0}^{\infty}$ of real
numbers is CPD and $\limsup_{n\to
\infty}|\gamma_n|^{1/n} < \infty$ if and only if there
exist $b\in \rbb$, $c\in \rbb_+$ and a finite
compactly supported Borel measure $\nu$ on $\rbb$ such
that $\nu(\{1\})=0$ and
   \begin{align} \label{optwm}
\gamma_n = \gamma_0 + bn + c n^2 + \int_{\rbb} Q_n(x)
\D\nu(x), \quad n\in \zbb_+.
   \end{align}
Such a triplet $(b,c,\nu)$ is unique.
   \end{thm}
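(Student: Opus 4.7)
My plan is to convert the CPD property into a classical moment problem via the second-difference operator $\triangle$. The key observation is that if $\gammab=\{\gamma_n\}_{n=0}^\infty$ is CPD, then $\triangle^2\gammab$ is a \emph{positive definite} sequence: fixing $\mu_0,\ldots,\mu_N\in\cbb$ and setting $\lambda_k=\mu_k-\mu_{k-1}$ for $k=0,\ldots,N+1$ (with $\mu_{-1}=\mu_{N+1}=0$), one has $\sum_k\lambda_k=0$ by telescoping, and two elementary index shifts yield
\begin{align*}
\sum_{i,j=0}^{N+1}\gamma_{i+j}\lambda_i\bar\lambda_j=\sum_{i,j=0}^N(\triangle^2\gammab)_{i+j}\mu_i\bar\mu_j,
\end{align*}
which is nonnegative by \eqref{wicher}. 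Exponential growth of $\gammab$ clearly transfers to $\triangle^2\gammab$.

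Since $\zbb_+$ carries the trivial involution, its real semicharacters are the maps $n\mapsto x^n$ for $x\in\rbb$, and the Hamburger-type moment theorem (a special case of \cite[Theorems~4.3.19 and 6.2.6]{B-C-R}) furnishes a finite, compactly supported Borel measure $\mu$ on $\rbb$ with $(\triangle^2\gammab)_n=\int_\rbb x^n\,\D\mu(x)$. Combined with \eqref{del2}, the sequence $n\mapsto\gamma_n-\int Q_n(x)\,\D\mu(x)$ has vanishing second difference, hence is affine in $n$, and since \eqref{klaud} gives $Q_0=Q_1=0$ this affine function must equal $\gamma_0+(\gamma_1-\gamma_0)n$. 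Writing $\mu=\nu+2c\,\delta_1$ with $c=\tfrac12\mu(\{1\})\in\rbb_+$ and $\nu(\{1\})=0$, and using $Q_n(1)=\tfrac{n(n-1)}{2}$ (immediate from \eqref{klaud}), the $\delta_1$-contribution is absorbed into the polynomial part to produce \eqref{optwm} with $b=\gamma_1-\gamma_0-c$.

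For the converse direction, since CPD is preserved under positive linear combinations and integration against positive measures, it suffices to check each summand in \eqref{optwm} is CPD: with $\sum_j\lambda_j=0$, the constant and linear-in-$n$ terms contribute $0$ to \eqref{wicher}, the $cn^2$ term contributes $2c\bigl|\sum_i i\lambda_i\bigr|^2\Ge 0$, and \eqref{rnx-1} together with $\sum\lambda_j=0$ gives $\sum_{i,j}Q_{i+j}(x)\lambda_i\bar\lambda_j=(x-1)^{-2}\bigl|\sum_i x^i\lambda_i\bigr|^2\Ge 0$ for $x\in\rbb\setminus\{1\}$; compactness of $\supp{\nu}$ secures exponential growth. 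For uniqueness, applying $\triangle^2$ to \eqref{optwm} gives $(\triangle^2\gammab)_n=\int x^n\,\D(\nu+2c\,\delta_1)(x)$, and since compactly supported Borel measures on $\rbb$ are determined by their moments (Stone--Weierstrass), $\nu+2c\,\delta_1$ is unique; isolating the atom at $1$ recovers $c$, then $\nu$, and finally $b$. The most delicate step is the moment representation of $\triangle^2\gammab$, where the semigroup L\'evy--Khinchin framework of \cite{B-C-R} enters; the exponential growth bound is precisely what guarantees compact support for the representing measure rather than a representation on the larger space of arbitrary semicharacters.
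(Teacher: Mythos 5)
Your proof is correct. The paper does not actually prove this statement --- it is imported verbatim from \cite{Ja-Ju-St22} (Theorem~2.2.5 there) with a pointer to the L\'evy--Khinchin machinery of \cite{B-C-R} --- but your argument (summation by parts shows $\triangle^2\gammab$ is PD, the Hamburger theorem plus the exponential bound yields a compactly supported representing measure, the second difference of $Q_{(\cdot)}(x)$ recovers $x^n$, and the atom at $1$ is split off into the $cn^2$ term via $Q_n(1)=\tfrac{n(n-1)}{2}$) is precisely the standard derivation that citation refers to; indeed, the identity $(\triangle^2\gammab)_n=\int_{\rbb} x^n\,\D(\nu+2c\,\delta_1)(x)$ on which you base uniqueness is the same one the paper records as \eqref{tryjg} and settles by the same moment-determinacy argument.
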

   \subsection{A little operator theory}
Given (complex) Hilbert spaces $\hh$ and
$\kk$, we denote by $\ogr{\hh,\kk}$ the
Banach space of all bounded linear operators
from $\hh$ to $\kk$. We abbreviate
$\ogr{\hh,\hh}$ to $\ogr{\hh}$ and regard
$\ogr{\hh}$ as a unital $C^*$-algebra with
the identity operator $I$ on $\hh$ as the
unit. The kernel and the range of $T\in
\ogr{\hh,\kk}$ are denoted by $\jd{T}$ and
$\ob{T}$, respectively. Two operators $A\in
\ogr{\hh}$ and $B\in \ogr{\kk}$ are said to
be {\em similar} if there exists a bijective
operator $X \in \ogr{\hh,\kk}$ such that
$XA=B X$. By the inverse mapping theorem,
$X^{-1}\in \ogr{\kk,\hh}$. We say that an
operator $X \in \ogr{\hh,\kk}$ is {\em
quasi-invertible}, or that $X$ is a {\em
quasi-affinity}, if $X$ is injective and has
dense range. Following \cite{Sz-F-B-K10} we
say that the operator $A$ is a {\em
quasi-affine transform} of the operator $B$
if there exists a quasi-invertible operator
$X\in \ogr{\hh,\kk}$ such that $XA=BX$. The
operators $A$ and $B$ are called {\em
quasi-similar} if they are quasi-affine
transforms of each another. Obviously,
similarity implies quasi-similarity (see
\cite{Sz-F-B-K10} for more information).

An operator $T\in \ogr{\hh}$ is said to be
{\em power bounded} if $\sup_{n\in \zbb_+}
\|T^n\|<\infty$. By the uniform boundedness
principle, an operator $T\in \ogr{\hh}$ is
power bounded if and only if $\limsup_{n\to
\infty} \|T^n h\|<\infty$ for every $h\in
\hh$. Given an operator $T\in \ogr{\hh}$ and
an integer $m\Ge 1$, we set
   \begin{align} \label{b2t}
\bscr_m(T) = \sum_{k=0}^m (-1)^k \binom{m}{k}
T^{*k}T^k.
   \end{align}
We say that $T$ is am $m$-isometry if $\bscr_m(T)=0$.
We refer the reader to \cite{Ag-St1,Ag-St2,Ag-St3} for
the basics of the theory of $m$-isometries.

   An operator $T\in \ogr{\hh}$ is said to be {\em
subnormal} if there exists a Hilbert space $\kk$ and a
normal operator $N\in \ogr{\kk}$ such that
$\hh\subseteq \kk$ (isometric embedding) and $Th=Nh$
for every $h\in \hh$. If $\kk$ has no proper closed
subspace that reduces $N$ and contains $\hh$, then $N$
is called a {\em minimal normal extension} of $T$.
According to Lambert's theorem $T$ is subnormal if and
only if for all $h\in \hh$, the sequence $\{\|T^n
h\|^2\}_{n=0}^{\infty}$ is positive definite (see
\cite{lam} and \cite[Theorem~1.1.1]{Ja-Ju-St22}).
Following \cite{Ja-Ju-St22}, we say that an operator
$T\in \ogr{\hh}$ is {\em conditionally positive
definite} ({\em {CPD}} for brevity) if for all $h\in
\hh$, the sequence $\{\|T^n h\|^2\}_{n=0}^{\infty}$ is
conditionally positive definite. The class of CPD
operators contains in particular subnormal operators
\cite{Hal50,Con91}, complete hypercontractions of
order $2$ \cite{Cha-Sh}, $3$-isometries
\cite{Ag-St1,Ag-St2,Ag-St3} and many~others.

Let $F\colon \ascr \to \ogr{\hh}$ be a {\em
semispectral measure} on a $\sigma$-algebra $\ascr$ of
subsets of a set $\varOmega$, i.e., $F$ is
$\sigma$-additive in the weak operator topology and
$F(\varDelta)\Ge 0$ for every $\varDelta\in \ascr$. We
{\em do not assume} that $F(\varOmega)=I$. Denote by
$L^1(F)$ the linear space of all complex
$\ascr$-measurable functions $\zeta$ on $\varOmega$
such that $\int_{\varOmega} |\zeta(x)| \is{F(\D x)h}h
< \infty$ for all $h\in \hh$. Then for every $\zeta\in
L^1(F)$, there exists a unique operator
$\int_\varOmega \zeta \D F \in \ogr{\hh}$ such that
(see e.g., \cite[Appendix]{Sto92})
   \begin{align*}
\Big\langle\int_\varOmega \zeta \D F h, h\Big\rangle =
\int_\varOmega \zeta(x) \is{F(\D x)h}h, \quad h\in\hh.
   \end{align*}
If $\varOmega=\rbb, \cbb$ and $F\colon
\borel{\varOmega} \to \ogr{\hh}$ is a semispectral
measure, then its closed support is denoted by
$\supp{F}$ (recall that such $F$ is automatically
regular so $\supp{F}$ exists). By a {\em semispectral
measure} of a subnormal operator $T\in \ogr{\hh}$ we
mean a compactly supported semispectral measure
$G\colon \borel{\cbb} \to \ogr{\hh}$ defined by
$G(\varDelta) = PE(\varDelta)|_{\hh}$ for $\varDelta
\in\borel{\cbb}$, where $E\colon \borel{\cbb} \to
\ogr{\kk}$ is the spectral measure of a minimal normal
extension $N\in \ogr{\kk}$ of $T$ and $P\in \ogr{\kk}$
is the orthogonal projection of $\kk$ onto $\hh$.
Clearly $F$ is normalized, i.e., $F(\cbb)=I$. It
follows from \cite[Proposition~5]{Ju-St08} and
\cite[Proposition~II.2.5]{Con91} that a subnormal
operator has exactly one semispectral measure. It is
also easily seen that (see, e.g.,
\cite[Proposition~3]{Ju-St08})
   \begin{align}  \label{tobemom}
T^{*m}T^n = \int_{\cbb} \bar z^{m}z^{n} G(\D z), \quad
m,n\in \zbb_+.
   \end{align}
We refer the reader to \cite{Con91} for the
foundations of the theory of subnormal operators.

The CPD operators can be characterized as follows.
   \begin{thm}[{\cite[Theorem~3.1.1]{Ja-Ju-St22}}]
\label{cpdops} An operator $T\in \ogr{\hh}$ is CPD if
and only if there exist operators $B,C\in \ogr{\hh}$
and a compactly supported semispectral measure
$F\colon \borel{\rbb_+} \to \ogr{\hh}$ such that
$B=B^*$, $C\Ge 0$, $F(\{1\})=0$ and
   \begin{align} \label{tynst}
T^{*n}T^n = I + n B + n^2 C + \int_{\rbb_+} Q_n(x)
F(\D x), \quad n\in \zbb_+.
   \end{align}
Moreover, such a triplet $(B,C,F)$ is unique.
   \end{thm}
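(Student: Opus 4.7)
The strategy is to reduce Theorem~\ref{cpdops} to the scalar CPD characterization stated just above by applying it pointwise to $\gamma_n(h) := \|T^n h\|^2 = \is{T^{*n}T^n h}{h}$ for each $h \in \hh$, and then extracting the operator triplet $(B,C,F)$ by polarization of the resulting scalar data.

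For the ``if'' direction, assume a triplet $(B, C, F)$ as stated. Pairing \eqref{tynst} with $h$ yields
\[
\gamma_n(h) = \|h\|^2 + n \is{Bh}{h} + n^2 \is{Ch}{h} + \int_{\rbb_+} Q_n(x) \D \nu_h(x),
\]
where $\nu_h(\varDelta) := \is{F(\varDelta)h}{h}$ is a compactly supported positive Borel measure on $\rbb_+$ with $\nu_h(\{1\}) = 0$, and $\is{Ch}{h} \Ge 0$. The scalar theorem then shows each $\{\gamma_n(h)\}$ is CPD, hence $T$ is CPD.

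For the ``only if'' direction, I would apply the scalar theorem to each $\{\gamma_n(h)\}$, which is legitimate because $\gamma_n(h) \Le \|T\|^{2n}\|h\|^2$. This yields uniquely determined scalars $b_h \in \rbb$, $c_h \in \rbb_+$ and a compactly supported measure $\nu_h$ on $\rbb$ satisfying $\nu_h(\{1\}) = 0$ and representing $\gamma_n(h)$. Polarization is then applied: expanding $\gamma_n(h + \alpha k) = \gamma_n(h) + 2\re(\alpha \is{T^{*n}T^n k}{h}) + |\alpha|^2 \gamma_n(k)$ for $\alpha \in \{1, \I, -1, -\I\}$ and comparing with the scalar decomposition term by term (legitimized by uniqueness) produces sesquilinear forms $b(h,k)$, $c(h,k)$ and $F_\varDelta(h,k)$ whose diagonals recover $b_h$, $c_h$, $\nu_h(\varDelta)$. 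Bounds of the type $|b_h|, c_h \Le \mathrm{const}(\|T\|)\|h\|^2$, read off from $\|T^{*n}T^n\| \Le \|T\|^{2n}$ for small $n$ via a linear-algebraic inversion of the moments $\gamma_0, \gamma_1, \gamma_2$, give boundedness of the forms, producing operators $B = B^*$, $C \Ge 0$ and $F(\varDelta) \Ge 0$ in $\ogr{\hh}$ matching \eqref{tynst}.

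The main obstacle I anticipate is establishing $\supp{F} \subseteq \rbb_+$ rather than merely $\rbb$, together with $\sigma$-additivity of $F$ in the weak operator topology. The scalar theorem by itself permits $\nu_h$ to charge $(-\infty, 0)$, so restricting the support to $\rbb_+$ is a genuinely operator-theoretic phenomenon: one must exploit the joint positivity of $\{T^{*n}T^n\}$ across all pairs of test vectors, for instance via Cauchy–Schwarz-type inequalities $|\is{T^{*n}T^n h}{k}|^2 \Le \gamma_n(h)\gamma_n(k)$ which couple $F_\varDelta(h,k)$ to $\nu_h$ and $\nu_k$ and thereby force the sign structure of $Q_n(x)$ for $x < 0$ to violate operator positivity unless $\nu_h$ is supported on $\rbb_+$. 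Once the support inclusion is in hand, $\sigma$-additivity of $F$ follows from $\sigma$-additivity of each scalar $\nu_h$ and uniform boundedness in $\|h\|=1$. Uniqueness of $(B, C, F)$ descends directly from uniqueness of the scalar triplet applied vector by vector.
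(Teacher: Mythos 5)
First, a point of order: the paper does not prove Theorem~\ref{cpdops} at all --- it is imported verbatim from \cite[Theorem~3.1.1]{Ja-Ju-St22} --- so there is no in-paper argument to compare against, and I can only assess your plan on its own terms.

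Your overall architecture --- apply the scalar L\'evy--Khinchin-type characterization to $\gamma_n(h)=\|T^nh\|^2$ for each $h$, then recover $(B,C,F)$ by polarization, with boundedness read off from $\gamma_0,\gamma_1,\gamma_2$ --- is the natural route. The ``if'' direction, the polarization step (parallelogram law plus homogeneity for $h\mapsto b_h$, $c_h$, $\nu_h(\varDelta)$, justified by uniqueness of the scalar triplet), the $\sigma$-additivity, and the uniqueness claim are all sound as sketched. The genuine gap is exactly where you place it, but your proposed remedy does not work as described. The scalar theorem hands you a measure $\nu_h$ on $\rbb$, and neither ``Cauchy--Schwarz coupling of $F_\varDelta(h,k)$ to $\nu_h$ and $\nu_k$'' nor ``the sign structure of $Q_n(x)$ for $x<0$'' forces $\supp{\nu_h}\subseteq\rbb_+$: a scalar representation whose measure charges $(-\infty,0)$ is perfectly consistent with conditional positive definiteness of a single sequence, and the obstruction is not visible at the level of pairs of test vectors. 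The property you must exploit is that the shifted sequence is again of the same form for another vector, namely $\gamma_{n+1}(h)=\gamma_n(Th)$. Applying $\triangle^2$ to the scalar representation and using \eqref{del2} gives
\begin{align*}
(\triangle^2\gamma(h))_n=\int_{\rbb}x^n\,\D\mu_h(x), \qquad \mu_h:=\nu_h+2c_h\delta_1\Ge 0,
\end{align*}
while $(\triangle^2\gamma(h))_{n+1}=(\triangle^2\gamma(Th))_n=\int_{\rbb}x^n\,\D\mu_{Th}(x)$. Since $\mu_h$ is compactly supported, its Hamburger moment sequence is determinate, so $x\,\D\mu_h(x)=\D\mu_{Th}(x)$ as measures; as $\mu_{Th}\Ge 0$, this forces $\mu_h((-\infty,0))=0$, hence $\supp{\nu_h}\subseteq\rbb_+$. (This is the same determinacy device the paper itself uses in the proof of Proposition~\ref{rewa}.) With that step supplied your plan goes through; without it, it is incomplete at its most delicate point.
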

Call $(B,C,F)$ appearing in Theorem~\ref{cpdops} the
{\em representing triplet} of $T$.
   \subsection{CPD weighted shifts}
Let $\ell^2$ stand for the Hilbert space of square
summable complex sequences
$\{\alpha_n\}_{n=0}^{\infty}$ and let
$\{e_n\}_{n=0}^{\infty}$ be the standard orthonormal
basis of $\ell^2$, i.e.,
$e_n=\{\delta_{n,k}\}_{k=0}^{\infty}$, where
$\delta_{n,k}$ is the Kronecker delta. For any bounded
sequence $\lambdab=\{\lambda_n\}_{n=0}^{\infty}
\subseteq (0,\infty)$, there exits a unique operator
$\wlam\in \ogr{\ell^2}$ such that
   \begin{align} \label{wtsht}
\wlam e_n = \lambda_n e_{n+1}, \quad n \in \zbb_+.
   \end{align}
We call $\wlam$ a {\em unilateral weighted
shift} with weights $\lambdab$. Clearly,
such $\wlam$ is always injective. If
$\lambda_n=1$ for all $n \in \zbb_+$, then
we call $\wlam$ the {\em unilateral shift}.
In this paper, we only consider bounded
unilateral weighted shifts with positive
real weights. We refer the reader to the
classical treatise \cite{shi74} for the
fundamentals of the theory o weighted
shifts.

If $\wlam\in \ogr{\ell^2}$ is a unilateral weighted
shift, then the sequence
$\hat{\lambdab}=\{\hat\lambda_n\}_{n=0}^{\infty}$
associated to $\wlam$ is defined by
   \begin{align} \label{mur-hupy}
\hat{\lambda}_n =
   \begin{cases}
1 & \text{if } n=0,
   \\
\lambda_0^2 \cdots \lambda_{n-1}^2 & \text{if } n \Ge
1,
   \end{cases}
\quad n\in \zbb_+.
   \end{align}
Call the sequence $\hat{\lambdab}$ the {\em formal
moment sequence} of $\wlam$. The famous Berger
theorem, which characterizes the subnormality of
unilateral weight\-ed shifts, can be stated as
follows.
   \begin{thm}[\mbox{\cite{g-w70,hal70}};
\mbox{\cite[Proposition~25]{shi74}}]
\label{Ber-G-W} $\wlam$ is subnormal if and
only if there exists a $($unique$)$
compactly supported Borel probability
measure $\mu$ on $\rbb_+$ such that
   \begin{align}  \label{Stieq}
\hat \lambda_n = \int_{\rbb_+} x^n \D \mu(x), \quad n
\in \zbb_+.
   \end{align}
Moreover $\supp{\mu}\subseteq
[0,\|\wlam\|^2]$. The measure $\mu$ is
called the Berger measure of~$\wlam$.
   \end{thm}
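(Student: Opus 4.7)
The plan is to establish both implications by means of a rotationally invariant lift of $\mu$ from $\rbb_+$ to $\cbb$, and to derive the support bound and uniqueness from a moment growth estimate together with the determinacy of the Hausdorff moment problem on a compact interval.

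For necessity, assume $\wlam$ is subnormal and let $G\colon \borel{\cbb} \to \ogr{\ell^2}$ be its semispectral measure. Specializing \eqref{tobemom} to $m=n$ and pairing with $e_0$ yields
\begin{align*}
\hat\lambda_n = \|\wlam^n e_0\|^2 = \is{\wlam^{*n}\wlam^n e_0}{e_0} = \int_\cbb |z|^{2n}\,\is{G(\D z)e_0}{e_0}.
\end{align*}
Since $G(\cbb)=I$, the scalar measure $\is{G(\cdot)e_0}{e_0}$ is a compactly supported Borel probability measure on $\cbb$, and its pushforward $\mu$ under $\phi(z)=|z|^2$ is the sought compactly supported Borel probability measure on $\rbb_+$ satisfying \eqref{Stieq}.

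For sufficiency, given $\mu$ as in \eqref{Stieq}, introduce the rotationally invariant Borel measure $\rho$ on $\cbb$ determined by
\begin{align*}
\int_\cbb f\,\D\rho = \frac{1}{2\pi}\int_0^{2\pi}\int_{\rbb_+} f(\sqrt{t}\,\E^{\I\theta})\,\D\mu(t)\,\D\theta, \quad f\in C_c(\cbb).
\end{align*}
By rotation invariance and \eqref{Stieq} one gets $\int_\cbb \bar z^m z^n \,\D\rho = \delta_{m,n}\hat\lambda_n$, so the monomials $\{z^n\}_{n=0}^\infty$ are pairwise orthogonal in $L^2(\rho)$ with $\|z^n\|^2 = \hat\lambda_n > 0$. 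Using $\sqrt{\hat\lambda_{n+1}} = \lambda_n\sqrt{\hat\lambda_n}$ from \eqref{mur-hupy}, the map defined on basis vectors by $Ue_n = z^n/\sqrt{\hat\lambda_n}$ extends to an isometry $U\colon \ell^2 \to L^2(\rho)$ whose range is the $L^2(\rho)$-closure $H^2(\rho)$ of $\cbb[z]$, and it intertwines $\wlam$ with $M_z|_{H^2(\rho)}$. Since $H^2(\rho)$ is invariant under the normal operator $M_z$ on $L^2(\rho)$, the operator $M_z|_{H^2(\rho)}$, and therefore $\wlam$, is subnormal.

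For the support bound and uniqueness, the elementary estimate $\hat\lambda_n = \|\wlam^n e_0\|^2 \Le \|\wlam\|^{2n}$ combined with \eqref{Stieq} forces $\supp\mu \subseteq [0,\|\wlam\|^2]$, for otherwise, isolating mass of $\mu$ outside this interval would give $\int x^n\,\D\mu$ growing strictly faster than $\|\wlam\|^{2n}$. Uniqueness of $\mu$ then follows from the determinacy of the Hausdorff moment problem on this compact interval. The only delicate point I anticipate is in the sufficiency direction, where one must rigorously verify that the polar coordinate definition of $\rho$ produces a genuine finite Borel measure on $\cbb$ and that $U$ is onto $H^2(\rho)$; both reduce to a Fubini-type argument and to the positivity of all $\hat\lambda_n$, which is guaranteed by the weights being strictly positive.
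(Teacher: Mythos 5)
The paper does not prove this theorem; it is quoted as a known result (Gellar--Wallen, Halmos, Shields), so there is no internal proof to compare against. Your argument is correct and is essentially the standard proof of the Berger--Gellar--Wallen theorem. The necessity direction via $\hat\lambda_n=\|\wlam^ne_0\|^2=\int_{\cbb}|z|^{2n}\is{G(\D z)e_0}{e_0}$ and the pushforward under $z\mapsto|z|^2$ is sound, and the sufficiency direction via the rotation-invariant lift of $\mu$ to $\cbb$ and the unitary $Ue_n=z^n/\sqrt{\hat\lambda_n}$ intertwining $\wlam$ with $M_z$ on $H^2(\rho)$ is exactly the construction the authors themselves deploy (in the reverse direction) in the proof of Theorem~\ref{mainth}, where $\rho$ is built from $\mu_1=\mu_0\circ\varphi^{-1}$ with $\varphi(x)=\sqrt{x}$ via \eqref{rygso}--\eqref{rygtro}. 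The support bound from $\hat\lambda_n\Le\|\wlam\|^{2n}$ and uniqueness from determinacy of the compactly supported (Hausdorff) moment problem are both fine; the ``delicate points'' you flag are indeed routine Fubini/Stone--Weierstrass matters. No gaps.
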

CPD unilateral weighted shifts can be characterized as
follows.
   \begin{thm}[\mbox{\cite[Theorem~3.1]{Ja-Ju-Le-St23}}]
\label{cpdws} A unilateral weighted shift
$\wlam$ is CPD if and only if there exist
$\bfrak\in \rbb$, $\cfrak\in \rbb_+$ and a
compactly supported finite Borel measure
$\nu$ on $\rbb_+$ such that $\nu(\{1\})=0$
and
   \begin{align} \label{wnezero}
\hat\lambda_n = 1 + \bfrak n + \cfrak n^2 +
\int_{\rbb_+} Q_n \D\nu, \quad n\in \zbb_+.
   \end{align}
Moreover, the triplet $(\bfrak,\cfrak,\nu)$ is unique.
   \end{thm}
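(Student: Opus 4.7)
My plan is to derive both directions from Theorem~\ref{cpdops} together with the earlier CPD-sequences characterization (the theorem containing \eqref{optwm}), exploiting the fact that for a unilateral weighted shift $\wlam^{*n}\wlam^n$ is diagonal in $\{e_k\}_{k\in\zbb_+}$ with $\is{\wlam^{*n}\wlam^n e_k}{e_k}=\hat\lambda_{n+k}/\hat\lambda_k$; in particular $\|\wlam^n e_0\|^2=\hat\lambda_n$.

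For necessity I will assume $\wlam$ is CPD and apply Theorem~\ref{cpdops} to produce a representing triplet $(B,C,F)$ satisfying \eqref{tynst}. Evaluating both sides at $e_0$ and pairing with $e_0$ should yield exactly \eqref{wnezero} with
\[
\bfrak := \is{Be_0}{e_0}\in\rbb,\quad \cfrak := \is{Ce_0}{e_0}\in\rbb_+,\quad \nu(\cdot) := \is{F(\cdot)e_0}{e_0};
\]
by Theorem~\ref{cpdops} the measure $\nu$ will automatically be a finite, compactly supported, positive Borel measure on $\rbb_+$ with $\nu(\{1\})=0$.

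For sufficiency, assume $\{\hat\lambda_n\}$ satisfies \eqref{wnezero}. The key observation is that this form is preserved under the one-step shift $n\mapsto\hat\lambda_{n+1}/\hat\lambda_1$: since $Q_1\equiv 0$ yields $\hat\lambda_1=1+\bfrak+\cfrak$, plugging \eqref{rnx-0} into the integral representation of $\hat\lambda_{n+1}$ and dividing by $\hat\lambda_1$ should produce
\[
\frac{\hat\lambda_{n+1}}{\hat\lambda_1}=1+\bfrak_1 n+\cfrak_1 n^2+\int_{\rbb_+}Q_n(x)\,\D\nu_1(x),\quad n\in\zbb_+,
\]
with $\bfrak_1=(\bfrak+2\cfrak+\nu(\rbb_+))/\hat\lambda_1$, $\cfrak_1=\cfrak/\hat\lambda_1\in\rbb_+$ and $\D\nu_1(x)=x\,\D\nu(x)/\hat\lambda_1$, still a finite positive Borel measure on $\rbb_+$ with $\nu_1(\{1\})=0$. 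Iterating, every shifted sequence $\{\hat\lambda_{n+k}/\hat\lambda_k\}_{n\Ge 0}$ will admit a representation of the same form and hence will be CPD by the easy direction of the CPD-sequences theorem. For a general $h=\sum_k\alpha_k e_k\in\ell^2$ one has $\|\wlam^n h\|^2=\sum_k|\alpha_k|^2\hat\lambda_{n+k}/\hat\lambda_k$, a nonnegative superposition of CPD sequences in $n$ that converges absolutely on each finite test set, and exchanging this outer sum with the finite test sum defining conditional positive definiteness will show $\wlam$ is CPD. Uniqueness of $(\bfrak,\cfrak,\nu)$ is inherited from the uniqueness assertion in the CPD-sequences theorem applied to $\{\hat\lambda_n\}$.

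The only subtlety I foresee is the bookkeeping for the iterated triplets $(\bfrak_k,\cfrak_k,\nu_k)$; however admissibility is automatic at each step because multiplication by the nonnegative weight $x/\hat\lambda_1$ preserves positivity on $\rbb_+$ and introduces no mass at $\{1\}$, so no real obstacle should arise beyond a careful computation.
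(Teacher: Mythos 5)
The paper offers no proof of this statement: it is imported verbatim from \cite[Theorem~3.1]{Ja-Ju-Le-St23}, so there is no internal argument to compare yours against. Judged on its own, your proposal is correct and is a sensible reconstruction from the other quoted results. For necessity, evaluating \eqref{tynst} at $e_0$ does give \eqref{wnezero} with $\bfrak=\is{Be_0}{e_0}$, $\cfrak=\is{Ce_0}{e_0}\Ge 0$ and $\nu=\is{F(\cdot)e_0}{e_0}$; the one point worth emphasizing is that you need the measure to live on $\rbb_+$ (not merely on $\rbb$, which is all the scalar L\'evy--Khinchin-type theorem around \eqref{optwm} would give when applied to $\hat\lambdab$), and it is precisely Theorem~\ref{cpdops} that supplies $F$ on $\borel{\rbb_+}$ --- so your choice to route necessity through the operator-level theorem rather than the sequence-level one is not cosmetic but essential. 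For sufficiency, your one-step recursion via \eqref{rnx-0} correctly yields $\hat\lambda_{n+1}/\hat\lambda_1=1+\bfrak_1 n+\cfrak_1 n^2+\int Q_n\,\D\nu_1$ with $\D\nu_1(x)=x\,\D\nu(x)/\hat\lambda_1$, and this reproduces exactly the diagonal formulas recorded in Theorem~\ref{truplyt}; positivity of $\nu_1$ again hinges on $\supp{\nu}\subseteq\rbb_+$, as you note. The superposition step is unproblematic: the test sums are finite, each inner series $\sum_k|\alpha_k|^2\hat\lambda_{i+j+k}/\hat\lambda_k=\|\wlam^{i+j}h\|^2$ converges, and the terms are nonnegative, so the interchange is legitimate. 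Uniqueness does follow from the uniqueness clause of the scalar representation theorem applied to $\hat\lambdab$, after viewing measures on $\rbb_+$ as measures on $\rbb$. I see no genuine gap; only the bookkeeping you already flagged needs to be written out.
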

Call $(\bfrak,\cfrak,\nu)$ appearing in
Theorem~\ref{cpdws} the {\em scalar representing
triplet} of $\wlam$. The relationship between
representing triplets $(B,C,F)$ and scalar
representing triplets $(\bfrak,\cfrak,\nu)$ of CPD
unilateral weighted shifts is given below.
   \begin{thm}[\mbox{\cite[Theorem~6.1]{Ja-Ju-Le-St23}}]
\label{truplyt} Let $(B,C,F)$ be the representing
triplet of $\wlam$ and $(\bfrak,\cfrak,\nu)$ be the
scalar representing triplet of $\wlam$. Then $B$, $C$
and $F(\varDelta)$, where $\varDelta \in
\borel{\rbb_+}$, are diagonal operators with respect
to $\{e_k\}_{k=0}^{\infty}$ with diagonal terms
$\{\bfrak_k\}_{k=0}^{\infty}$,
$\{\cfrak_k\}_{k=0}^{\infty}$ and
$\{\nu_k(\varDelta)\}_{k=0}^{\infty}$ given by
   \begin{align*}
\bfrak_k=\frac{\hat\lambda_{k+1} - \hat\lambda_{k}
-\cfrak}{\hat\lambda_k}, \quad \cfrak_k =
\frac{\cfrak}{\hat\lambda_k}, \quad \nu_k(\varDelta) =
\frac{1}{\hat\lambda_k} \int_{\varDelta} x^k \D
\nu(x),
   \end{align*}
where $\hat\lambda_k$ are as in \eqref{mur-hupy}. In
particular, $\cfrak=\is{Ce_0}{e_0}$ and
$\nu(\cdot)=\is{F(\cdot)e_0}{e_0}$.
   \end{thm}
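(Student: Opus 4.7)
The plan is to exploit the diagonal action of $\wlam^{*n}\wlam^n$ together with the uniqueness clauses of Theorems~\ref{cpdops} and \ref{cpdws}, reducing the problem to an algebraic identity between the polynomials $Q_n$.

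First I would test the operator identity \eqref{tynst} against the basis vector $e_k$. Since $\wlam^n e_k = (\hat\lambda_{n+k}/\hat\lambda_k)^{1/2}\, e_{n+k}$, we have $\is{\wlam^{*n}\wlam^n e_k}{e_k}=\hat\lambda_{n+k}/\hat\lambda_k$, so
\begin{align*}
\hat\lambda_{n+k}/\hat\lambda_k = 1 + n\is{Be_k}{e_k} + n^2 \is{Ce_k}{e_k} + \int_{\rbb_+} Q_n(x)\, \mathrm{d}\mu_k(x),
\end{align*}
where $\mu_k(\cdot):=\is{F(\cdot)e_k}{e_k}$ is a positive, finite, compactly supported Borel measure on $\rbb_+$ with $\mu_k(\{1\})=0$.

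Next, I would exhibit a second triplet representation of the same scalar CPD sequence. Iterating \eqref{rnx-0} gives $Q_{n+k}(x) = x^k Q_n(x) + n S_k(x) + Q_k(x)$, where $S_k(x):=\sum_{j=0}^{k-1}x^j$, and \eqref{rnx-1} implies $S_k(x)=(x-1)Q_k(x)+k$. Substituting this into \eqref{wnezero} for $\hat\lambda_{n+k}$ and dividing by $\hat\lambda_k$ yields, after grouping terms,
\begin{align*}
\hat\lambda_{n+k}/\hat\lambda_k = 1 + n\,\frac{\bfrak+2\cfrak k+\int S_k\,\mathrm{d}\nu}{\hat\lambda_k} + n^2 \frac{\cfrak}{\hat\lambda_k} + \int_{\rbb_+} Q_n(x)\,\mathrm{d}\nu_k(x),
\end{align*}
with $\nu_k$ as in the statement. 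Computing $\hat\lambda_{k+1}-\hat\lambda_k$ directly from \eqref{wnezero} and using $Q_{k+1}-Q_k=S_k$ confirms that the coefficient of $n$ equals $\bfrak_k$. Since $\cfrak_k\Ge 0$ and $\nu_k$ satisfies $\nu_k(\{1\})=0$, uniqueness in Theorem~\ref{cpdws} (applied to the scalar sequence $\{\hat\lambda_{n+k}/\hat\lambda_k\}_{n=0}^{\infty}$) forces $\is{Be_k}{e_k}=\bfrak_k$, $\is{Ce_k}{e_k}=\cfrak_k$, and $\mu_k=\nu_k$.

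To pass from the diagonal entries to full diagonality of $B$, $C$ and $F(\varDelta)$, I would run a polarization argument: for $j\neq k$ and $\zeta\in\cbb$, put $h=e_k+\zeta e_j$. Orthogonality of $\wlam^n e_k$ and $\wlam^n e_j$ gives
\begin{align*}
\is{\wlam^{*n}\wlam^n h}{h} = \hat\lambda_{n+k}/\hat\lambda_k + |\zeta|^2\,\hat\lambda_{n+j}/\hat\lambda_j,
\end{align*}
which has no $\zeta$-cross terms. Comparing with \eqref{tynst} and using scalar uniqueness again, one finds $\is{Bh}{h}=\bfrak_k+|\zeta|^2\bfrak_j$, and similarly for $C$ and for the measure $\is{F(\cdot)h}{h}$. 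Expanding $\is{Bh}{h}$ in the definition of $h$, the terms $\zeta\is{Be_j}{e_k}+\bar\zeta\is{Be_k}{e_j}$ must vanish for every $\zeta\in\cbb$, which forces $\is{Be_k}{e_j}=0$; the same argument applied to $C$ and to each $F(\varDelta)$ completes the proof of diagonality. The final ``in particular'' assertions are immediate by setting $k=0$ and using $\hat\lambda_0=1$.

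There is no real obstacle; the only delicate point is the algebraic verification that the linear-in-$n$ coefficient produced by the $Q_{n+k}$-identity coincides with the proposed $\bfrak_k$, which reduces to the identity $(x-1)Q_k(x)+k=S_k(x)$ obtained from \eqref{rnx-1}.
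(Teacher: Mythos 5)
The paper does not prove Theorem~\ref{truplyt}; it is imported verbatim from \cite[Theorem~6.1]{Ja-Ju-Le-St23}, so there is no in-paper proof to compare against. Judged on its own, your argument is correct and complete. The key computations all check out: $\|\wlam^n e_k\|^2=\hat\lambda_{n+k}/\hat\lambda_k$; the iterated recursion $Q_{n+k}(x)=x^kQ_n(x)+nS_k(x)+Q_k(x)$ with $S_k(x)=\sum_{j=0}^{k-1}x^j$; the identities $Q_{k+1}-Q_k=S_k$ and $S_k(x)=(x-1)Q_k(x)+k$, which together show that the linear-in-$n$ coefficient $\bigl(\bfrak+2\cfrak k+\int S_k\,\D\nu\bigr)/\hat\lambda_k$ equals $(\hat\lambda_{k+1}-\hat\lambda_k-\cfrak)/\hat\lambda_k$. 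The appeal to uniqueness is legitimate because both competing representations of the sequence $\{\|\wlam^n h\|^2\}_{n=0}^\infty$ satisfy all the hypotheses of the L\'evy--Khinchin-type theorem: in each case the quadratic coefficient is nonnegative ($C\Ge0$, resp.\ $\cfrak\Ge0$), the measure is positive, finite, compactly supported and assigns no mass to $\{1\}$ ($F(\{1\})=0$, resp.\ $\nu(\{1\})=0$), and the sequence has at most exponential growth since $\|\wlam^nh\|^2\Le\|\wlam\|^{2n}\|h\|^2$. The polarization step for the off-diagonal entries is also sound: orthogonality of $\wlam^ne_k$ and $\wlam^ne_j$ kills the cross terms on one side, and letting $\zeta$ range over $\{1,\I\}$ forces $\is{Be_k}{e_j}=0$ (likewise for $C$ and each $F(\varDelta)$). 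This is a clean, self-contained derivation of the cited result from Theorems~\ref{cpdops} and \ref{cpdws}.
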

   \begin{cor} \label{bcny}
Under the hypotheses of Theorem~{\em \ref{truplyt}},
$\cfrak=0$ if and only if $C=0$ and
$\supp{\nu}=\supp{F}$.
   \end{cor}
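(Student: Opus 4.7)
The plan is to read both equivalences directly off the diagonal formulas provided by Theorem~\ref{truplyt}, treating the two assertions ``$C=0$'' and ``$\supp{\nu}=\supp{F}$'' separately.

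First I would handle the statement about $C$. By Theorem~\ref{truplyt}, $C$ is diagonal in $\{e_k\}_{k=0}^{\infty}$ with diagonal entries $\cfrak_k=\cfrak/\hat\lambda_k$, and $\hat\lambda_k>0$ for every $k$. Hence $C=0$ is equivalent to $\cfrak_k=0$ for all $k$, which in turn is equivalent to $\cfrak=0$. Equivalently, one can observe that $\cfrak=\is{Ce_0}{e_0}$, so $C=0\Rightarrow \cfrak=0$ is immediate, while the converse follows because $\cfrak=0$ forces every $\cfrak_k$ to vanish.

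Next I would argue the support identity, which I expect to hold unconditionally (independently of whether $\cfrak=0$), so that it follows automatically once the $C$-part is settled. By Theorem~\ref{truplyt}, $F$ is diagonal with diagonal entries $\{\nu_k\}_{k=0}^{\infty}$ given by $\nu_k(\varDelta)=\tfrac{1}{\hat\lambda_k}\int_{\varDelta} x^k\,\D\nu(x)$. Taking $k=0$ and using $\hat\lambda_0=1$ yields $\nu_0=\nu$, so $\supp{\nu}\subseteq \supp{F}$. Conversely, each $\nu_k$ is absolutely continuous with respect to $\nu$ with density $x^k/\hat\lambda_k$, so $\supp{\nu_k}\subseteq \supp{\nu}$ for every $k$. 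Since $F\Ge 0$ is diagonal, $F(\varDelta)=0$ iff $\nu_k(\varDelta)=0$ for all $k$, whence $\supp{F}=\overline{\bigcup_{k\in\zbb_+}\supp{\nu_k}}\subseteq \supp{\nu}$.

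Combining these two observations gives the corollary: $\cfrak=0$ iff $C=0$, and $\supp{\nu}=\supp{F}$ holds in all cases. I do not anticipate a real obstacle here; the only point requiring a bit of care is justifying that $F(\varDelta)=0$ can be tested diagonally, which uses the positivity of $F(\varDelta)$ together with diagonality in the orthonormal basis $\{e_k\}_{k=0}^{\infty}$.
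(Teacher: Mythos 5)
Your proof is correct and is exactly the intended derivation: the paper states Corollary~\ref{bcny} without a written proof, as an immediate consequence of the diagonal formulas in Theorem~\ref{truplyt}, and your reading of those formulas (the $k=0$ entry gives $\supp{\nu}\subseteq\supp{F}$, absolute continuity of each $\nu_k$ with respect to $\nu$ gives the reverse inclusion, and $\cfrak_k=\cfrak/\hat\lambda_k$ with $\hat\lambda_k>0$ gives $C=0\Leftrightarrow\cfrak=0$) is precisely the argument the authors have in mind. Your observation that $\supp{\nu}=\supp{F}$ holds unconditionally, so that this clause of the corollary is automatic, is also accurate.
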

As shown below, CPD unilateral weighted shifts can be
modelled by CPD sequences with positive terms.
   \begin{thm}[\mbox{\cite[Theorem~4.1]{Ja-Ju-Le-St23}}]
\label{wkwcpdws} Let
$\gammab=\{\gamma_{n}\}_{n=0}^{\infty} \subseteq
(0,\infty)$ be of the form
   \begin{gather}
\label{pfsw} \gamma_{n} = 1 + b n + c n^2 +
\int_{\rbb_+} Q_n \D\nu, \quad n\in \zbb_+,
   \end{gather}
where $b\in \rbb$, $c\in \rbb_+$ and $\nu$ is a
compactly supported finite Borel measure on $\rbb_+$
such that $\nu(\{1\})=0$. Then the sequence
$\lambdab=\{\lambda_n\}_{n=0}^{\infty}$ defined by
   \begin{align*}
\lambda_n=\sqrt{\frac{\gamma_{n+1}}{\gamma_{n}}},
\quad n\in \zbb_+,
   \end{align*}
is bounded and the unilateral weighted shift $\wlam$
is CPD. Moreover, $\hat\lambdab=\gammab$.
   \end{thm}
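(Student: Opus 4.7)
The theorem makes three claims: (i) $\hat\lambdab=\gammab$; (ii) $\lambdab$ is bounded, so $\wlam\in\ogr{\ell^2}$; (iii) $\wlam$ is CPD. The plan is to verify them in this order, because (iii) then falls out of the ``if'' direction of Theorem~\ref{cpdws}.

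For (i), evaluating \eqref{pfsw} at $n=0$ gives $\gamma_0=1$ since $Q_0\equiv 0$. Using the definition of $\lambda_k$ and \eqref{mur-hupy}, the product telescopes:
\[
\hat\lambda_n=\prod_{k=0}^{n-1}\frac{\gamma_{k+1}}{\gamma_k}=\frac{\gamma_n}{\gamma_0}=\gamma_n,\qquad n\in\zbb_+.
\]

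For (ii), choose $R\ge 1$ with $\supp{\nu}\subseteq[0,R]$ and set $p(n)=1+bn+cn^2$. The recursion \eqref{rnx-0}, integrated against $\nu$ and estimated using $x\le R$ on $\supp{\nu}$, gives $\int_{\rbb_+}Q_{n+1}\D\nu \le R\int_{\rbb_+}Q_n\D\nu + n\,\nu(\rbb_+)$. Substituting into \eqref{pfsw} yields
\[
\gamma_{n+1}\le R\,\gamma_n + q(n),\qquad n\in\zbb_+,
\]
for an explicit polynomial $q$ of degree at most $2$, so $\lambda_n^2\le R+q(n)/\gamma_n$. Since $Q_n$ has nonnegative coefficients, $\gamma_n\ge p(n)$; since $Q_n(x)\to\infty$ pointwise on $\rbb_+\setminus\{1\}$ and $\nu(\{1\})=0$, monotone convergence gives $\int Q_n\D\nu\to\infty$ whenever $\nu\ne 0$. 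A brief case split on whether the dominant growth comes from $cn^2$, from $bn$, or from the integral shows that in every non-trivial situation $\gamma_n$ grows at a rate at least matching $q(n)$, while in the trivial case $b=c=0$, $\nu=0$ one has $\gamma_n\equiv 1$ and $\lambda_n\equiv 1$. In all cases $q(n)/\gamma_n$ is bounded, which yields $\sup_n\lambda_n<\infty$.

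With (i) and (ii) in hand, $\wlam\in\ogr{\ell^2}$ is well defined, and its formal moment sequence satisfies \eqref{wnezero} with the given triplet $(b,c,\nu)$, so the ``if'' direction of Theorem~\ref{cpdws} gives (iii). The main obstacle in this plan is the uniform control of $q(n)/\gamma_n$ in step~(ii): positivity of $\gamma$ alone does not prevent $\gamma_n$ from being small relative to $q(n)$, so one must essentially use the compact support of $\nu$ together with the exclusion $\nu(\{1\})=0$ (which forces the integral term to dominate whenever $\nu\ne 0$) to close the estimate.
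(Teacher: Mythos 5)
The paper itself offers no proof of this statement: Theorem~\ref{wkwcpdws} is imported verbatim from \cite[Theorem~4.1]{Ja-Ju-Le-St23}, so your argument can only be judged on its own terms. Parts (i) and (iii) of your plan are correct: $\gamma_0=1$ because $Q_0\equiv 0$, the product telescopes to $\hat\lambdab=\gammab$, and once boundedness is secured the ``if'' direction of Theorem~\ref{cpdws} gives the CPD property. The gap is in step (ii), and it is genuine. Your final estimate $\lambda_n^2\Le R+q(n)/\gamma_n$ needs $q(n)/\gamma_n$ to be bounded, and this is false in general because $\gamma_n$ need not grow at all. Take $b=-\tfrac12$, $c=0$ and $\nu=\tfrac14\delta_{1/2}$. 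By \eqref{rnx-1}, $\int_{\rbb_+}Q_n\D\nu=\tfrac14Q_n(\tfrac12)=\tfrac n2-1+2^{-n}$, so $\gamma_n=2^{-n}$ for every $n\in\zbb_+$: a perfectly legitimate instance of the hypotheses ($\gamma_n>0$, $\nu$ compactly supported, $\nu(\{1\})=0$). Here $\lambda_n^2\equiv\tfrac12$ is of course bounded, but with $R=1$ your $q(n)=b+n\nu(\rbb_+)=\tfrac n4-\tfrac12$ grows linearly while $\gamma_n\to0$ geometrically, so $q(n)/\gamma_n\to\infty$ and the bound is vacuous (no other admissible choice of $R$ helps). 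The heuristic you propose to close the gap --- that compact support together with $\nu(\{1\})=0$ ``forces the integral term to dominate'' --- is exactly what fails here: $\int_{\rbb_+}Q_n\D\nu$ does tend to $\infty$, but only linearly, and it cancels against $bn$ to leading order, leaving $\gamma_n$ decreasing to $0$.

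What actually rescues the statement is the convexity of $\gammab$, which your argument never uses: by \eqref{del2}, $(\triangle^2\gammab)_n=2c+\int_{\rbb_+}x^n\D\nu(x)\Ge0$, so the increments $d_n:=\gamma_{n+1}-\gamma_n$ are nondecreasing. Hence either $d_n<0$ for all $n$, in which case $\lambda_n^2<1$ outright (this is what happens in the example above), or $d_n\Ge0$ for all $n\Ge n_0$; in the latter case one writes $e_j:=(\triangle^2\gammab)_j$, notes the submultiplicativity $e_{j+1}\Le\theta e_j$ with $\theta:=\max\{1,\sup\supp{\nu}\}$, and uses $\gamma_n\Ge\gamma_{n_0}+(n-n_0)d_{n_0}+\sum_{j=n_0}^{n-2}(n-1-j)e_j$ together with $d_n=d_{n_0}+\sum_{j=n_0}^{n-1}e_j$ to get $d_n/\gamma_n\Le1+\theta$ for $n\Ge n_0+2$, whence $\sup_n\lambda_n^2<\infty$. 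Your telescoped recursion $\gamma_{n+1}\Le R\gamma_n+q(n)$ and the lower bound $\gamma_n\Ge p(n)$ are both true but cannot substitute for this dichotomy.
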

The detailed discussion of the circumstances under
which a given sequence $\{\gamma_{n}\}_{n=0}^{\infty}$
of the form \eqref{pfsw} is positive is performed in
\cite[Theorem~5.2]{Ja-Ju-Le-St23}. To avoid
over-complexity of the problem studied in this paper,
we will focus on the case when $b\Ge 0$ (then
automatically $\gamma_n \Ge 1$ for all $n\in \zbb_+$).

The next result gives a necessary and sufficient
condition for a CPD unilateral weighted shift to be
subnormal.
   \begin{pro} \label{rewa}
Let $\wlam\in \ogr{\ell^2}$. Then the following are
equivalent{\em :}
   \begin{enumerate}
   \item[\rm(i)] $\wlam$ is subnormal,
   \item[\rm(ii)] $\wlam$ is CPD with the scalar
representing triplet $(\bfrak,\cfrak,\nu)$ such
that\footnote{If (ii-a) holds, then by the
Cauchy-Schwarz inequality \mbox{$\frac{1}{x-1}\in
L^1(\nu)$}, so $\int_{\rbb_+} \frac{1}{x-1} \D \nu(x)$
exists.}{\em :}
   \begin{enumerate}
   \item[\rm(ii-a)] $\int_{\rbb_+} \frac{1}{(x-1)^2} \D \nu(x) \Le
1$,
   \item[\rm(ii-b)] $\bfrak=\int_{\rbb_+} \frac{1}{x-1} \D
\nu(x)$,
   \item[\rm(ii-c)] $\cfrak=0$.
   \end{enumerate}
   \end{enumerate}
   \end{pro}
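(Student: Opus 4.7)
The plan is to prove both directions by explicitly relating the Berger measure $\mu$ (when it exists) to the scalar representing measure $\nu$ via the density $\frac{\D\nu}{\D\mu}(x) = (x-1)^2$, using the algebraic identity
\[
x^n = 1 + n(x-1) + (x-1)^2 Q_n(x), \qquad x \ne 1,
\]
which is just a rearrangement of \eqref{rnx-1}. At $x = 1$ both sides equal $1$, so the identity in fact holds on all of $\rbb$ with $Q_n(1)$ interpreted via \eqref{klaud}.

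For the implication (i)$\Rightarrow$(ii), I assume $\wlam$ is subnormal with Berger measure $\mu$ (Theorem~\ref{Ber-G-W}). Since subnormal operators are CPD, $\wlam$ has a scalar representing triplet $(\bfrak,\cfrak,\nu)$ by Theorem~\ref{cpdws}. Integrating the above identity against $\mu$ and using $\mu(\rbb_+) = 1$ yields
\[
\hat\lambda_n = \int_{\rbb_+} x^n \D\mu(x) = 1 + n\!\!\int_{\rbb_+}(x-1)\D\mu(x) + \int_{\rbb_+} Q_n(x)(x-1)^2 \D\mu(x), \quad n \in \zbb_+.
\]
Setting $\tilde\nu(\varDelta) := \int_\varDelta (x-1)^2 \D\mu$, we see that $\tilde\nu$ is compactly supported, $\tilde\nu(\{1\})=0$, and comparison with \eqref{wnezero} combined with the uniqueness clause of Theorem~\ref{cpdws} forces $\cfrak = 0$, $\bfrak = \int (x-1)\D\mu$, and $\nu = \tilde\nu$. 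This immediately gives (ii-c). Moreover, $\mu$ restricted to $\rbb_+ \setminus \{1\}$ has density $\frac{1}{(x-1)^2}$ with respect to $\nu$, so
\[
1 = \mu(\rbb_+) = \mu(\{1\}) + \int_{\rbb_+}\frac{1}{(x-1)^2}\D\nu,
\]
yielding (ii-a); and $\bfrak = \int(x-1)\D\mu = \int \frac{1}{x-1}\D\nu$ (the point $x=1$ contributes $0$), giving (ii-b). The Cauchy–Schwarz remark in the footnote follows from (ii-a) and finiteness of $\nu$.

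For the converse (ii)$\Rightarrow$(i), I define a candidate Berger measure by
\[
\mu(\varDelta) := \Big(1 - \int_{\rbb_+}\tfrac{1}{(x-1)^2}\D\nu\Big)\delta_1(\varDelta) + \int_{\varDelta \setminus \{1\}}\tfrac{1}{(x-1)^2}\D\nu(x), \quad \varDelta \in \borel{\rbb_+}.
\]
Assumption (ii-a) makes the coefficient of $\delta_1$ nonnegative, so $\mu$ is a Borel probability measure on $\rbb_+$, compactly supported since $\nu$ is. Using the identity above together with (ii-b) and $\cfrak = 0$ from (ii-c), a direct computation gives
\[
\int_{\rbb_+} x^n \D\mu(x) = 1 + n\bfrak + \int_{\rbb_+} Q_n \D\nu = \hat\lambda_n, \quad n\in \zbb_+,
\]
where the splitting of the integral at the point mass is unambiguous because $\nu(\{1\}) = 0$. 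Berger's Theorem~\ref{Ber-G-W} then yields subnormality of $\wlam$ with Berger measure $\mu$.

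The only genuine technical care is bookkeeping the point $x = 1$: the density $(x-1)^{-2}$ is singular there, so both the forward decomposition of $\mu$ and the reverse construction must separate off a possible atom at $1$, and it is precisely $\nu(\{1\}) = 0$ (built into Theorem~\ref{cpdws}) that makes these manipulations legitimate. No deeper obstacle arises; the result is essentially a moment-matching argument driven by the identity $x^n = 1 + n(x-1) + (x-1)^2 Q_n(x)$ and the uniqueness of the triplets.
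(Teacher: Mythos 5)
Your proof is correct, and it takes a genuinely different route from the paper's. The paper handles both implications by reducing to an external characterization of when a CPD sequence is positive definite (\cite[Theorem~2.2.12]{Ja-Ju-St22}), and in the direction (ii)$\Rightarrow$(i) it additionally invokes the Hamburger theorem together with a moment-determinacy argument to force the representing measure onto $\rbb_+$. You instead work directly with the identity $x^n=1+n(x-1)+(x-1)^2Q_n(x)$: in the forward direction you integrate it against the Berger measure and read off $(\bfrak,\cfrak,\nu)$ from the uniqueness clause of Theorem~\ref{cpdws}, obtaining $\nu=(x-1)^2\D\mu$, $\cfrak=0$ and $\bfrak=\int_{\rbb_+}\frac{1}{x-1}\D\nu$; in the reverse direction you explicitly build the candidate Berger measure $\mu=\bigl(1-\int_{\rbb_+}\frac{1}{(x-1)^2}\D\nu\bigr)\delta_1+\frac{1}{(x-1)^2}\D\nu$ and verify the moment identity, after which Theorem~\ref{Ber-G-W} finishes. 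What this buys: the argument is self-contained modulo Berger's theorem and the uniqueness of the triplet, it exhibits the Berger measure explicitly in terms of $\nu$, and it bypasses the Hamburger/determinacy step entirely because your $\mu$ is supported in $\rbb_+$ by construction. The one delicate point, the singularity of $(x-1)^{-2}$ at the possible atom $x=1$, is exactly where care is needed, and you handle it correctly by exploiting $\nu(\{1\})=0$ and splitting off the $\delta_1$ component.
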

   \begin{proof}
(i)$\Rightarrow$(ii) Assume that $\wlam$ is subnormal.
Then, by Theorem~\ref{Ber-G-W}, $\hat{\lambdab}$ is
PD, so it is CPD. Applying
\cite[Theorem~2.2.12]{Ja-Ju-St22} to
$\gammab:=\hat{\lambdab}$, we get (ii).

(ii)$\Rightarrow$(i) Suppose (ii) is valid. Applying
\cite[Theorem~2.2.12]{Ja-Ju-St22} to
$\gammab:=\hat{\lambdab}$, we see that
$\hat{\lambdab}$ is PD. According to the Hamburger
theorem (see \cite[Theorem~6.2.2]{B-C-R}), there
exists a finite Borel measure $\mu$ on $\rbb$ such
that
   \begin{align} \label{gmyn}
\hat\lambda_n=\int_{\rbb} x^n \D \mu(x), \quad n\in
\zbb_+.
   \end{align}
By \eqref{del2} and \eqref{wnezero}, we have (cf.\
\cite[(2.2.16)]{Ja-Ju-St22})
   \begin{align} \label{tryjg}
(\triangle^2 \hat{\lambdab})_n = \int_{\rbb_+} x^n \D
(\nu + 2\cfrak \delta_1)(x), \quad n\in \zbb_+.
   \end{align}
Using \eqref{gmyn} and \eqref{tryjg}, we deduce that
   \begin{align*}
\int_{\rbb} x^n (x-1)^2 \D \mu(x)=(\triangle^2
\hat\lambdab)_n = \int_{\rbb_+} x^n \D (\nu + 2\cfrak
\delta_1)(x), \quad n\in \zbb_+.
   \end{align*}
Since the measure $\nu + 2\cfrak \delta_1$ is
compactly supported, the sequence $\{(\triangle^2
\hat\lambdab)_n\}_{n=0}^{\infty}$ is a determinate
Hamburger moment sequence (see
\cite[Corollary~4.2]{Sch17}). Hence
   \begin{align*}
\int_{\varDelta}(x-1)^2 \D \mu(x)=\nu(\varDelta) +
2\cfrak \delta_1(\varDelta), \quad \varDelta\in
\borel{\rbb}.
   \end{align*}
This implies that $\supp{\mu}$ is a compact subset of
$\rbb_+$. Therefore, by \eqref{gmyn}, we have
   \begin{align*}
\hat\lambda_n=\int_{\rbb_+} x^n \D \mu(x), \quad n\in
\zbb_+.
   \end{align*}
Using Theorem~\ref{Ber-G-W}, we conclude that $\wlam$
is subnormal.
   \end{proof}
Let $\rho$ be a compactly supported finite
Borel measure on $\cbb$. Denote by
$H^2(\rho)$ the closure in $L^2(\rho)$ of
the set of all polynomials in indeterminate
$z$ with complex coefficients. The operator
of {\em multiplication} by the independent
variable ``$z$'', denoted by $M_z$, is
defined on $H^2(\rho)$ by
   \begin{align*}
(M_z f)(w) = w f(w) \;\; \text{for $\rho$-a.e.\ $w$},
\quad f \in H^2(\rho).
   \end{align*}
Clearly, $M_z$ is a bounded operator on $H^2(\rho)$,
which is cyclic with the identity function on $\cbb$
as the cyclic vector. Moreover, since the operator of
multiplication by the independent variable on
$L^2(\rho)$ is normal and extends $M_z$, we see that
$M_z$ is subnormal.
   \section{Similarity}
   \subsection{\label{Sec.2.1}Necessary conditions}
In this subsection, we provide some
necessary conditions for a CPD operator to
be similar to a subnormal operator. We
formulate them in terms of the representing
triplet.
   \begin{lem} \label{jswq}
Let $T\in \ogr{\hh}$ be a CPD operator with the
representing triplet $(B,C,F)$ such that $\supp{F}
\subseteq [0,1]$. Suppose that $T$ is is similar to a
subnormal operator. Then $T$ is power bounded.
   \end{lem}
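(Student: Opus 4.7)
The plan is to show directly that $\|T^n\|$ grows at most polynomially in $n$, deduce that the spectral radius of $T$ is at most $1$, and then exploit the fact that the similar subnormal operator is normaloid.

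First, I would use the representing triplet to get explicit control on $\|T^n h\|^2$. By Theorem~\ref{cpdops}, for every $h \in \hh$,
\begin{align*}
\|T^n h\|^2 = \|h\|^2 + n \is{Bh}{h} + n^2 \is{Ch}{h} + \int_{[0,1]} Q_n(x)\, \is{F(\D x)h}{h},
\end{align*}
where the support of $F$ is contained in $[0,1]$ by hypothesis. The key observation is the pointwise bound $0 \Le Q_n(x) \Le \binom{n}{2}$ for $x \in [0,1]$, which is immediate from \eqref{klaud} since $Q_n(x) = \sum_{j=0}^{n-2}(n-j-1) x^j$ is a sum of nonnegative terms each dominated by $n-j-1$. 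Consequently, the last integral is bounded by $\binom{n}{2}\|F([0,1])\|\,\|h\|^2$, and so
\begin{align*}
\|T^n h\|^2 \Le \big(1 + n\|B\| + n^2\|C\| + \tfrac{1}{2}n(n-1)\|F([0,1])\|\big)\|h\|^2,
\end{align*}
i.e., $\|T^n\|^2 = O(n^2)$ as $n\to\infty$. This is where the hypothesis $\supp{F}\subseteq[0,1]$ is indispensable: if $F$ charged $(1,\infty)$, the quantity $Q_n(x)$ would grow like $x^n$ and the bound would collapse.

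From the polynomial bound it follows that the spectral radius satisfies
\begin{align*}
r(T) = \lim_{n\to\infty} \|T^n\|^{1/n} \Le 1.
\end{align*}
Now let $S$ be the subnormal operator to which $T$ is similar, say $XT = SX$ with $X$ bijective. Then $r(T) = r(S)$. Since $S$ is subnormal it is hyponormal, hence normaloid, so $\|S\| = r(S) = r(T) \Le 1$. Therefore $\|S^n\| \Le \|S\|^n \Le 1$ for all $n$, and
\begin{align*}
\|T^n\| = \|X^{-1} S^n X\| \Le \|X^{-1}\|\, \|X\|, \quad n\in\zbb_+,
\end{align*}
showing that $T$ is power bounded.

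I do not foresee a serious obstacle: the main point is the clean polynomial estimate on $Q_n$ over $[0,1]$, and the rest is the routine observation that a subnormal operator achieves its spectral radius as its norm. The only care needed is to verify the pointwise bound on $Q_n$ on $[0,1]$ via the summation formula \eqref{klaud} rather than \eqref{rnx-1}, since the denominator $(x-1)^2$ in \eqref{rnx-1} becomes singular precisely near the endpoint that would otherwise be most dangerous.
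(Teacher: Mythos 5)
Your proof is correct, and it takes a genuinely different route from the paper's. The paper fixes the similarity $XS=TX$, sandwiches $\|S^n h\|^2$ between $\|X\|^{-2}\varLambda(n;h)$ and $\|X^{-1}\|^2\varLambda(n;h)$, where $\varLambda(n;h)=\|T^nXh\|^2$ is expanded via the representing triplet, brings in the semispectral measure $G$ of $S$ through \eqref{tobemom}, uses the polynomial bound on $Q_n$ over $[0,1]$ to force $\supp{\is{G(\cdot)h}{h}}\subseteq\bar\dbb$ for every $h$, and then applies dominated convergence and the uniform boundedness principle to conclude. You use the same key estimate $0\Le Q_n\Le\binom{n}{2}$ on $[0,1]$ (which is indeed exactly where the hypothesis $\supp{F}\subseteq[0,1]$ enters), but you apply it directly to $\|T^n\|$ to get $r(T)\Le 1$, and then replace the entire semispectral-measure machinery by the observation that a subnormal operator is hyponormal, hence normaloid, so $\|S\|=r(S)=r(T)\Le 1$ and $\|T^n\|\Le\|X\|\,\|X^{-1}\|$ for all $n\in\zbb_+$. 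This is shorter and more self-contained: it yields a uniform bound on $\|T^n\|$ without invoking the uniform boundedness principle, and it dispenses with the measure-theoretic step entirely. The authors essentially acknowledge your endgame in the remark following the lemma (``$S$ is a contraction and therefore power bounded''), but they still reach contractivity of $S$ through the support of its semispectral measure, whereas you reach it through the normaloid property together with the fact that similarity preserves the spectral radius --- both are legitimate, and all the facts you rely on (Gelfand's formula, $\sigma(T)=\sigma(S)$ under similarity, $\|S^n\|=\|S\|^n$ for hyponormal $S$) are standard.
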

   \begin{proof}
Suppose that there exists a subnormal operator $S\in
\ogr{\kk}$ and an invertible operator $X\in
\ogr{\kk,\hh}$ such that $XS=TX$. Then
   \begin{align*}
XS^n=T^nX, \quad n\in \zbb_+.
   \end{align*}
Using \eqref{tynst} and the fact that $X$ is
invertible, we get
   \allowdisplaybreaks
   \begin{align}  \notag
\|X\|^{-2} \varLambda(n;h) & = \|X\|^{-2} \|T^nXh\|^2
=\|X\|^{-2}\|XS^nh\|^2
   \\ \notag
& \Le \|S^n h\|^2
   \\ \label{xsmy}
& \Le \|X^{-1}\|^2 \|XS^nh\|^2 = \|X^{-1}\|^2
\varLambda(n;h), \quad n\in \zbb_+, \, h\in \hh,
   \end{align}
where $\varLambda(n;h)$ is defined for $h\in \hh$ and
$n\in \zbb_+$ as follows
   \begin{align*}
\varLambda(n;h):=\|Xh\|^2 + n \is{BXh}{Xh} + n^2
\is{CXh}{Xh} + \int_{\rbb_+} Q_n(x) \is{F(\D
x)Xh}{Xh}.
   \end{align*}
Let $G$ be the semispectral measure of the subnormal
operator $S$. Then, by \eqref{tobemom} and
\eqref{xsmy}, we have
   \begin{align} \label{oszweq}
\|X\|^{-2} \varLambda(n;h) \Le \int_{\cbb} |z|^{2n}
\is{G(\D z)h}{h} \Le \|X^{-1}\|^2 \varLambda(n;h),
\quad n\in \zbb_+, \, h\in \hh.
   \end{align}
Since, by \eqref{klaud}, $Q_n(x) \Le \frac{(n-1)n}{2}$
for all $x\in [0,1]$ and $n\in \zbb_+$, we deduce that
   \begin{align*}
\varLambda(n;h) \Le \|Xh\|^2 + n \is{BXh}{Xh} + n^2
\is{CXh}{Xh} + \frac{(n-1)n}{2} \is{F(\rbb_+)Xh}{Xh}
   \end{align*}
for all $h\in \hh$ and $n\in \zbb_+$. This and the
right inequality in \eqref{oszweq} imply that
   \begin{align*}
\lim_{n\to\infty}\Big(\int_{\cbb} |z|^{2n} \is{G(\D
z)h}{h}\Big)^{1/n} \Le 1, \quad h\in \hh,
   \end{align*}
so, by \cite[Ex.\ 4(e), p.\ 71]{Rud87}, we have
   \begin{align} \label{sykppi}
\supp{\is{G(\cdot)h}{h}} \subseteq \bar\dbb, \quad
h\in \hh.
   \end{align}
Using the left inequality in \eqref{oszweq}, together
with \eqref{sykppi} and Lebesgue's dominated
convergence theorem, we obtain
   \begin{align*}
\|X\|^{-2} \limsup_{n\to\infty} \varLambda(n;h) \Le
\lim_{n\to \infty} \int_{\cbb}|z|^{2n} \is{G(\D
z)h}{h} = \is{G(\tbb)h}{h}, \quad h\in \hh.
   \end{align*}
Hence $\limsup_{n\to\infty} \|T^n h\|^2
\overset{\eqref{tynst}} =
\limsup_{n\to\infty}\varLambda(n;h) < \infty$ for
every $h\in \hh$. By the uniform boundedness
principle, $T$ is power bounded.
   \end{proof}
   \begin{rem}
As for the last part of the proof of Lemma~\ref{jswq},
starting from \eqref{sykppi}, we can also argue as
follows. Applying \eqref{tobemom} to $n=1$ and using
\eqref{sykppi}, we conclude that $S$ is a contraction
and therefore power bounded. Hence, since $T$ is
similar to $S$, $T$ is also power bounded.
   \hfill $\diamondsuit$
   \end{rem}
   \begin{thm} \label{feqas}
Let $T\in \ogr{\hh}$ be a CPD operator with the
representing triplet $(B,C,F)$ such that $\supp{F}
\subseteq [0,1]$. Suppose that $T$ is is similar to a
subnormal operator. Then the following conditions
hold{\em :}
   \begin{enumerate}
   \item[(i)] $C = 0$,
   \item[(ii)] $B+F(\rbb_+)\Le 0$,
   \item[(iii)] $B+F(\rbb_+) = 0$ or
$\supp{F} \not \subseteq \{0\}$,
   \item[(iv)] $B\not\Ge
0$ or $\supp{F} \cap (0,1) = \emptyset$.
   \end{enumerate}
   \end{thm}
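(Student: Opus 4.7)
The plan is to combine Lemma~\ref{jswq}, which gives power boundedness of $T$, with an asymptotic analysis of the moment identity \eqref{tynst} at the three scales $n^2$, $n$, and $1$. Fix $K>0$ such that $\|T^n h\|^2 \Le K\|h\|^2$ for all $n\in \zbb_+$ and $h\in \hh$, and abbreviate $\mu_h:=\is{F(\cdot)h}{h}$, a finite Borel measure supported in $[0,1]$ with $\mu_h(\{1\})=0$.

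For (i) I would divide \eqref{tynst} by $n^2$. The explicit formula \eqref{klaud} gives $0 \Le Q_n(x) \Le Q_n(1) = n(n-1)/2$ on $[0,1]$, so $Q_n(x)/n^2$ is dominated by $1/2$, tends to $0$ on $[0,1)$, and to $1/2$ at $x=1$; since $\mu_h(\{1\})=0$, dominated convergence yields $n^{-2}\int Q_n \D\mu_h \to 0$. Power boundedness gives $n^{-2}\|T^n h\|^2 \to 0$, so $\is{Ch}{h}=0$, and $C \Ge 0$ forces $C=0$. For (ii), with $C=0$ in hand, I would use \eqref{rnx-0} to verify that $n \mapsto Q_n(x)/n$ is nondecreasing on $[0,1)$ with pointwise limit $(1-x)^{-1}$. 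Monotone convergence then gives
\[
\int_{[0,1)} \frac{\D\mu_h(x)}{1-x} = \lim_{n\to\infty} \frac{1}{n}\int_{[0,1)} Q_n \D\mu_h \Le -\is{Bh}{h},
\]
the last inequality coming from $(\|T^n h\|^2 - \|h\|^2)/n \to 0$ via power boundedness. Since $(1-x)^{-1} \Ge 1$ on $[0,1)$, the left side majorises $\is{F(\rbb_+)h}{h}$, yielding $\is{(B+F(\rbb_+))h}{h} \Le 0$ for all $h$, which is (ii).

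For (iii) I would argue by contraposition: assume $\supp{F} \subseteq \{0\}$, so $F = F(\{0\})\delta_0$, and using $Q_n(0)=n-1$ for $n\Ge 2$, \eqref{tynst} reduces to
\[
\|T^n h\|^2 = \|h\|^2 - \is{F(\rbb_+)h}{h} + n\is{(B+F(\rbb_+))h}{h}, \qquad n\Ge 2.
\]
Nonnegativity of the left-hand side as $n\to\infty$ forces the coefficient of $n$ to be $\Ge 0$, while (ii) gives the reverse inequality, whence $B+F(\rbb_+)=0$. For (iv), if $B\Ge 0$ then (ii) expresses the sum of the nonnegative forms $\is{Bh}{h}$ and $\is{F(\rbb_+)h}{h}$ as $\Le 0$, so each vanishes for every $h$; thus $F(\rbb_+)=0$, hence $F\equiv 0$ and $\supp{F}=\emptyset$. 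The main technical point is the monotone-convergence step in (ii), since the limit $(1-x)^{-1}$ is not a priori integrable against $\mu_h$; monotonicity of $n \mapsto Q_n(x)/n$ on $[0,1)$ is the one place where a direct substitution does not suffice, but the identity $nQ_{n+1}(x) - (n+1)Q_n(x) = \sum_{j=0}^{n-1}(j+1)x^j$, verified by induction from \eqref{rnx-0}, makes this elementary.
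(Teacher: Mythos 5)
Your proof is correct, and for parts (i), (ii) and (iv) it takes a genuinely different route from the paper's, although both hinge on the same key input, Lemma~\ref{jswq}. The paper argues each part by contradiction: it picks a witness $h_0$ for the failure of the conclusion and uses crude pointwise lower bounds on $Q_n$ (namely $Q_n\Ge 0$, $Q_n(x)\Ge n-1$ on $\rbb_+$, and a linear-in-$n$ lower bound on a window around a point of $\supp{F}\cap(0,1)$) to force $\|T^nh_0\|\to\infty$. You instead extract the conclusions directly by reading off the asymptotics of \eqref{tynst} at the scales $n^2$ and $n$, using dominated convergence for (i) and the monotonicity $nQ_{n+1}(x)-(n+1)Q_n(x)=\sum_{j=0}^{n-1}(j+1)x^j\Ge 0$ plus monotone convergence for (ii); both convergence steps check out, and the hypotheses $\supp{F}\subseteq[0,1]$ and $F(\{1\})=0$ are used exactly where needed. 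Your route buys more than the stated theorem: the limit computation in (ii) actually yields the identity $\is{Bh}{h}+\int_{[0,1)}\frac{1}{1-x}\is{F(\D x)h}{h}=0$ for all $h$ (in particular the integral is finite), which is sharper than $B+F(\rbb_+)\Le 0$ and mirrors the subnormality characterization of Proposition~\ref{rewa}; and your (iv) is obtained as a purely formal consequence of (ii) together with positivity ($B\Ge 0$ forces $B=0$ and $F=0$), which makes the paper's separate contradiction argument for (iv) unnecessary --- indeed, as your argument makes explicit, (iv) as stated already follows from (ii). The trade-off is that the paper's proof is more elementary (no integration-to-the-limit theorems) and more self-contained part by part. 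Your (iii) is essentially the paper's argument: reduce to the affine-in-$n$ formula via $Q_n(0)=n-1$ and combine nonnegativity of $\|T^nh\|^2$ with part (ii).
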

   \begin{proof}
   (i) Suppose, to the contrary, that $C\neq 0$. Since
$C \Ge 0$, there exists $h_0\in \hh$ such that
$\is{Ch_0}{h_0} > 0$. Noting that
   \begin{multline*}
\|T^n h_0\|^2 \overset{\eqref{tynst}} = \|h_0\|^2 + n
\is{Bh_0}{h_0} + n^2 \is{Ch_0}{h_0} + \int_{\rbb_+}
Q_n(x) \is{F(\D x)h_0}{h_0}
   \\
\Ge n \is{Bh_0}{h_0} + n^2 \is{Ch_0}{h_0}, \quad n\in
\zbb_+,
   \end{multline*}
we see that $\lim_{n\to\infty} \|T^n h_0\| =+\infty$,
which contradicts Lemma~\ref{jswq}.

(ii) Suppose, to the contrary, that
$B+F(\rbb_+)\nleqslant 0$. Then there exists $h_0 \in
\hh$ such that
   \begin{align*}
\alpha:=\is{(B+F(\rbb_+)h_0}{h_0} > 0.
   \end{align*}
By (i), $C=0$. Since, by \eqref{klaud}, $Q_n(x) \Ge
n-1$ for all $x\in \rbb_+$ and $n\Ge 1$, we see that
   \begin{align*}
\|T^n h_0\|^2 & \Ge \|h_0\|^2 + n \is{Bh_0}{h_0} +
(n-1) \is{F(\rbb_+)h_0}{h_0}
   \\
& = (\|h_0\|^2 - \is{F(\rbb_+)h_0}{h_0}) + n \alpha,
\quad n \Ge 1.
   \end{align*}
so $\lim_{n\to\infty} \|T^n h_0\|=+\infty$, which
contradicts Lemma~\ref{jswq}.

(iii) Suppose, to the contrary, that $B+F(\rbb_+) \neq
0$ and $\supp{F} \subseteq \{0\}$. By (i), $C=0$.
According to \eqref{klaud}, $Q_n(0)=n-1$ for every
$n\Ge 1$, so
   \begin{align} \notag
\|T^n h\|^2 & = \|h\|^2 + n \is{Bh}{h} + (n-1)
\is{F(\{0\})h}{h}
   \\  \label{jtwjda}
& = \is{(I - F(\rbb_+)) h}{h}) + n
\is{(B+F(\rbb_+)h}{h}, \quad h \in \hh, \, n
\Ge 1.
   \end{align}
This implies that $B+F(\rbb_+) \Ge 0$. Since
$B+F(\rbb_+) \neq 0$, it must be $B+F(\rbb_+)\not\Le
0$, which contradicts (ii).

(iv) Suppose, to the contrary, that $B\Ge 0$
and there exists $\theta \in \supp{F} \cap
(0,1)$. By (i), $C=0$. Let $\varepsilon>0$
be such that $J_{\varepsilon}:=(\theta -
\varepsilon,\theta +\varepsilon) \subseteq
(0,1)$. Then $F(J_{\varepsilon}) \neq 0$,
which implies that there exists $h_0\in \hh$
such that $\is{F(J_{\varepsilon})h_0}{h_0} >
0$. This yields
   \begin{align*}
\|T^n h_0\|^2 & \Ge \|h_0\|^2 + n \is{Bh_0}{h_0} +
\int_{J_{\varepsilon}} Q_n(x) \is{F(\D x)h_0}{h_0}
   \\
& \hspace{-1ex}\overset{\eqref{rnx-1}} \Ge
\frac{n (1-(\theta+\varepsilon))
-1}{(1-(\theta-\varepsilon))^2} \,
\is{F(J_{\varepsilon})h_0}{h_0}, \quad n \in
\zbb_+.
   \end{align*}
Again we obtain $\lim_{n\to\infty} \|T^n
h_0\|=+\infty$, which contradicts Lemma~\ref{jswq}.
   \end{proof}
   \begin{rem}
As for condition (i) of Theorem~\ref{feqas},
note that if $\supp{F}$ is not contained in
the closed interval $[0,1]$, it can happen
that the unilateral weighted shift $\wlam$
is similar to a subnormal operator and $C
\neq 0$. Such a possibility is guaranteed by
the ''moreover'' parts of
Theorems~\ref{kdwq} and \ref{ineqsuf} (see
also Corollary~ \ref{bcny}).
   \end{rem}
   \begin{cor} \label{symfo}
Let $T\in \ogr{\hh}$ be a non-subnormal CPD
operator with the representing triplet
$(B,C,F)$ such that $\supp{F} \subseteq
\{0\}$. Then $T$ is not similar to any
subnormal operator.
   \end{cor}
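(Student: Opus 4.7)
The plan is to argue by contradiction: suppose $T$ is similar to some subnormal operator and derive that $T$ itself must be subnormal, against the hypothesis. Since $\supp{F}\subseteq\{0\}\subseteq[0,1]$, Theorem~\ref{feqas} applies and yields $C=0$. Moreover, combining (iii) with $\supp{F}\subseteq\{0\}$ forces $B+F(\rbb_+)=0$, i.e.\ $B=-F(\rbb_+)$. (Condition (iv) is automatic here since $\supp{F}\cap(0,1)=\emptyset$, and (ii) is consistent with $B\le 0$.)

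Next I would substitute these facts into the representation \eqref{tynst}. Since $Q_n(0)=n-1$ for $n\Ge 1$ by \eqref{klaud}, and $F$ is concentrated at $0$, the integral term equals $(n-1)F(\rbb_+)$, so
\[
T^{*n}T^n = I + nB + (n-1)F(\rbb_+) = I - F(\rbb_+),\qquad n\Ge 1,
\]
while $T^{*0}T^0=I$. Thus for every $h\in\hh$ the sequence $\gamma_n:=\|T^nh\|^2$ takes only two values: $\gamma_0=\|h\|^2$ and $\gamma_n=\|h\|^2-\is{F(\rbb_+)h}{h}$ for all $n\Ge 1$. Set $a:=\is{F(\rbb_+)h}{h}\Ge 0$ and $b:=\|h\|^2-a=\|Th\|^2\Ge 0$; then $\gamma_0=a+b$ and $\gamma_n=b$ for $n\Ge 1$.

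I would now recognise this as a Stieltjes moment sequence. Indeed, with the positive measure $\mu:=a\,\delta_0+b\,\delta_1$ on $\rbb_+$, one has $\int x^n\,\D\mu=b$ for all $n\Ge 1$ (using $0^n=0$ for $n\Ge 1$) and $\int x^0\,\D\mu=a+b$, so $\gamma_n=\int_{\rbb_+}x^n\D\mu(x)$ for every $n\in\zbb_+$. In particular $\{\gamma_n\}_{n=0}^{\infty}=\{\|T^nh\|^2\}_{n=0}^{\infty}$ is positive definite on $\zbb_+$. Since $h\in\hh$ was arbitrary, Lambert's theorem (cited after the definition of subnormality) implies that $T$ is subnormal, contradicting the assumption that $T$ is non-subnormal.

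There is no serious obstacle in this argument; the only step requiring a moment's thought is the observation that $B+F(\rbb_+)=0$ collapses $T^{*n}T^n$ to an $n$-independent operator for $n\Ge 1$, after which the representing measure $a\,\delta_0+b\,\delta_1$ is essentially forced and subnormality drops out directly from Lambert's criterion.
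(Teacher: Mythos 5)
Your proof is correct. It opens exactly as the paper's does: apply Theorem~\ref{feqas} (legitimate since $\{0\}\subseteq[0,1]$) to get $C=0$ from (i) and then $B+F(\rbb_+)=0$ from (iii), because the disjunct $\supp{F}\not\subseteq\{0\}$ is excluded by hypothesis. The two arguments diverge only at the final step. The paper observes via \eqref{jtwjda} that $F(\rbb_+)\Le I$ and then cites \cite[Theorem~3.4.1]{Ja-Ju-St22} as a black box to conclude subnormality. You instead make this step self-contained: using $Q_n(0)=n-1$ for $n\Ge 1$ you collapse \eqref{tynst} to $T^{*n}T^n=I-F(\rbb_+)$ for all $n\Ge 1$, note that $b:=\is{(I-F(\rbb_+))h}{h}=\|Th\|^2\Ge 0$ (which is precisely the paper's inequality $F(\rbb_+)\Le I$ in disguise), exhibit the explicit representing measure $a\,\delta_0+b\,\delta_1$ for $\{\|T^nh\|^2\}_{n=0}^{\infty}$, and invoke Lambert's theorem as stated in the paper. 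Every step checks out, including the positive-definiteness of the two-valued sequence, which follows from $\sum_{i,j}\gamma_{i+j}\lambda_i\bar\lambda_j=\int|\sum_i\lambda_ix^i|^2\,\D\mu\Ge 0$. What your route buys is independence from the external reference and an explicit identification of the local moment measures (in effect reproving the relevant direction of the cited theorem in this special case); what the paper's route buys is brevity. There is no gap.
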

   \begin{proof}
Suppose, to the contrary, that $T$ is
similar to a subnormal operator. By
Theorem~\ref{feqas}, $C=0$ and $B+F(\rbb_+)
= 0$. Hence, by \eqref{jtwjda}, $F(\rbb_+)
\Le I$. This implies that $T$ satisfies
statement (ii) of
\cite[Theorem~3.4.1]{Ja-Ju-St22}. By this
theorem, $T$ is subnormal, a contradiction.
   \end{proof}
   \begin{cor}  \label{ns3ic}
A non-isometric $3$-isometry is not similar
to any subnormal operator.
   \end{cor}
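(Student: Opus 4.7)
The plan is to reduce the corollary to Theorem~\ref{feqas} via the CPD representing triplet. Let $T$ be a non-isometric $3$-isometry. Since $3$-isometries are CPD (as recorded in the introduction), Theorem~\ref{cpdops} produces a representing triplet $(B, C, F)$ for $T$.

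The first key step is to pin down this triplet. A standard Agler--Stankus computation, starting from $\bscr_3(T) = 0$, shows that $\{T^{*n}T^n\}_{n=0}^{\infty}$ is a polynomial in $n$ of degree at most $2$, necessarily of the form $T^{*n}T^n = I + n A_1 + n^2 A_2$ (the constant term being fixed at $I$ by the case $n = 0$, and $A_2 \Ge 0$ by dividing by $n^2$ and letting $n \to \infty$ in the positivity $T^{*n}T^n \Ge 0$). Comparing with the CPD formula \eqref{tynst} and invoking the uniqueness clause of Theorem~\ref{cpdops}, we obtain $B = A_1$, $C = A_2$, and, crucially, $F = 0$. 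In particular, $\supp F = \emptyset \subseteq [0, 1]$, so the hypotheses of Theorem~\ref{feqas} are in place as soon as similarity to a subnormal operator is assumed.

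Now suppose, for contradiction, that $T$ is similar to a subnormal operator. Theorem~\ref{feqas}(i) forces $C = 0$. Theorem~\ref{feqas}(iii) says that either $B + F(\rbb_+) = 0$ or $\supp F \not\subseteq \{0\}$; since $F = 0$, the second alternative cannot hold, so $B = 0$. Combining these, $T^{*n}T^n = I$ for every $n \in \zbb_+$, which means $T$ is an isometry, contradicting the hypothesis.

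The only genuinely delicate point is the collapse of the representing triplet to $(A_1, A_2, 0)$, which rests on the uniqueness assertion in Theorem~\ref{cpdops}; everything after that is a two-line appeal to Theorem~\ref{feqas}. An equally valid but slightly longer route would be to prove first that every subnormal $3$-isometry is an isometry---apply $\triangle^3$ to $\|T^n h\|^2 = \int_{\cbb} |z|^{2n} \is{G(\D z)h}{h}$ and use determinacy of the compactly supported moment problem to force $\supp \is{G(\cdot)h}{h} \subseteq \tbb$ and hence $T^*T = I$---and then quote Corollary~\ref{symfo}; but going through Theorem~\ref{feqas} avoids this detour entirely.
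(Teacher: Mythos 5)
Your proof is correct. The identification of the representing triplet as $(A_1,A_2,0)$ via the Agler--Stankus polynomial form of $T^{*n}T^n$ and the uniqueness clause of Theorem~\ref{cpdops} is exactly the content of Lemma~\ref{dqaer}(i) in the paper, and your application of Theorem~\ref{feqas}(i) and (iii) with $F=0$ (so $\supp F=\emptyset\subseteq\{0\}$, forcing $B+F(\rbb_+)=B=0$) is sound. Where you genuinely diverge is in how the contradiction is reached: the paper first invokes an external result of Sholapurkar--Athavale to conclude that a non-isometric $3$-isometry is not subnormal, and then routes through Corollary~\ref{symfo}, whose proof in turn appeals to a subnormality characterization from the monograph \cite{Ja-Ju-St22} to contradict non-subnormality. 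You instead push Theorem~\ref{feqas} all the way to $B=C=0$, hence $T^{*n}T^n=I$, and contradict the non-isometry hypothesis directly. Your route is shorter and more self-contained for this particular corollary, needing neither external reference; the paper's detour through Corollary~\ref{symfo} buys the more general dichotomy for all CPD operators with $\supp F\subseteq\{0\}$ (which covers the type~II weighted shifts in Proposition~\ref{nydseq}), at the cost of the extra machinery. Your closing remark about the alternative route via subnormal $3$-isometries being isometric is also accurate but, as you note, unnecessary.
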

   \begin{proof}
Let $T\in \ogr{\hh}$ be a non-isometric
$3$-isometry. In view of
\cite[Proposition~4.5]{Sh-At00}, $T$ is not
subnormal. By
\cite[Proposition~4.3.1]{Ja-Ju-St22}, $T$ is
CPD and $F=0$. According to
Corollary~\ref{symfo}, $T$ is not similar to
any subnormal operator.
   \end{proof}
   \subsection{\label{Sec.2.2}The proof of the main theorem}
The following lemma is a direct consequence of
\cite[Proposition~4.3.1]{Ja-Ju-St22} and the fact that
$2$-isometries are $3$-isometries (see \cite[p.\
389]{Ag-St1}).
   \begin{lem} \label{dqaer}
Let $T\in \ogr{\hh}$. Then
   \begin{enumerate}
   \item[(i)] $T$ is a
$3$-isometry if and only if $T$ is CPD and $F=0$,
   \item[(ii)] $T$ is a
$2$-isometry if and only if $T$ is CPD, $F=0$ and
$C=0$,
   \end{enumerate}
where $(B,C,F)$ stands for the representing triplet of
a CPD operator $T$.
   \end{lem}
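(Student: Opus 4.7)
The two parts decouple cleanly: part (i) does the heavy lifting and part (ii) then reads off as a short corollary, so I would prove (i) first and then deduce (ii) from it.

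For (i), the most efficient route is simply to invoke \cite[Proposition~4.3.1]{Ja-Ju-St22}; to my reading that proposition is exactly the statement that $3$-isometries are the CPD operators with vanishing semispectral part in their representing triplet. A self-contained derivation would proceed as follows: start from
\begin{equation*}
T^{*n}T^n = I + nB + n^2 C + \int_{\rbb_+} Q_n(x)\,F(\D x), \quad n\in\zbb_+,
\end{equation*}
apply $\bscr_3$ term by term, and use the identities $\sum_{k=0}^{3}(-1)^k\binom{3}{k}k^j = 0$ for $j=0,1,2$ to wipe out the $I$, $B$, and $C$ contributions. What remains is $\int_{\rbb_+}\varphi(x)\,F(\D x)$ with $\varphi(x):=\sum_{k=0}^{3}(-1)^k\binom{3}{k}Q_k(x)$, which a brief computation from \eqref{klaud} shows equals $1-x$. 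The implication $F=0\Rightarrow\bscr_3(T)=0$ is then a one-line binomial check; for the forward direction, pairing with a vector $h\in\hh$ reduces the vanishing of the integral (for all $n$) to a scalar moment statement, and the uniqueness clause of Theorem~\ref{cpdops}, applied to the scalar CPD sequence $\{\|T^n h\|^2\}_{n=0}^{\infty}$, forces $F=0$.

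For (ii), the extra ingredient is the classical observation, credited in the paper to \cite[p.~389]{Ag-St1}, that every $2$-isometry is a $3$-isometry. This is immediate from the propagation identity
\begin{equation*}
\bscr_{m+1}(T) = \bscr_m(T) - T^*\bscr_m(T)T,
\end{equation*}
which makes $\bscr_2(T)=0$ imply $\bscr_3(T)=0$. Given this, part (i) says that a $2$-isometry is CPD with $F=0$, so $T^{*n}T^n = I + nB + n^2C$. Evaluating $\bscr_2$ on this quadratic via the same binomial identities gives $\bscr_2(T) = 2C$, whence the $2$-isometry condition becomes $C=0$. The converse is trivial: if $T$ is CPD with $F=0$ and $C=0$, then $T^{*n}T^n$ is affine in $n$, so $\bscr_2(T)=0$ by a one-line binomial check.

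The one place that needs care—if one refuses to black-box \cite[Proposition~4.3.1]{Ja-Ju-St22}—is the passage from ``$\int_{\rbb_+}(1-x)\,F(\D x)=0$'' (an operator equation that follows at once from $\bscr_3(T)=0$) to ``$F=0$''. This must be routed through the uniqueness statement in Theorem~\ref{cpdops}, which turns each scalar moment identity into a constraint on the scalar measure $\is{F(\cdot)h}h$. Modulo this bookkeeping, the whole lemma reduces to binomial identities and the well-understood recursion for the operators $\bscr_m(T)$, which is precisely why the authors bill it as a direct consequence of the two cited results.
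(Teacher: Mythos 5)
Your proposal is correct and takes essentially the same route as the paper, whose entire proof is the one\nobreakdash-sentence observation that the lemma follows from \cite[Proposition~4.3.1]{Ja-Ju-St22} together with the fact that $2$-isometries are $3$-isometries; your binomial/recursion computations are accurate elaborations of why those citations suffice. The only caveat concerns your optional self-contained sketch of the forward direction of (i): starting from the formula $T^{*n}T^n = I + nB + n^2C + \int Q_n\,\D F$ presupposes that the $3$-isometry $T$ is already known to be CPD (with $C\Ge 0$, $F\Ge 0$), which is itself part of what the cited proposition provides and would need a separate argument in a truly self-contained treatment.
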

Let us divide the class of all CPD unilateral weighted
shifts into three disjunctive types. That the
definition of types is correct, and the types
themselves are logically disjunctive, can be readily
deduced from Corollary~\ref{bcny} and
Lemma~\ref{dqaer}.
   \begin{dfn} \label{kgsw}
A CPD unilateral weighted shift $\wlam\in
\ogr{\ell^2}$ with the scalar representing triplet
$(\bfrak,\cfrak,\nu)$ is
   \begin{enumerate}
   \item[$\bullet$] of type~I if $\nu=0$ and $\cfrak =
0$, or equivalently if and only if $\wlam$ is a
$2$-isometry,
   \item[$\bullet$] of type~II if $\supp{\nu} =
\{0\}$ and $\cfrak = 0$,
   \item[$\bullet$] of type~III if $\supp{\nu}\nsubseteq \{0\}$ or
$\cfrak \neq 0$.
   \end{enumerate}
   \end{dfn}
Unilateral weighted shifts of type~II are particular
instances of general CPD operators for which
$\supp{F}=\{0\}$ and $C=0$. The latter class of
operators was studied in \cite[Sec.~4.3]{Ja-Ju-St22}.
Its algebraic characterizations were given in
\cite[Proposition~4.3.5]{Ja-Ju-St22}.
   \begin{pro} \label{nydseq}
Non-subnormal CPD weighted shifts of types I
and II are not similar to subnormal
operators.
   \end{pro}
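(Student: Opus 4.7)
The plan is to reduce the statement directly to Corollary~\ref{symfo}, which already rules out similarity to a subnormal operator for any non-subnormal CPD operator whose representing triplet $(B,C,F)$ satisfies $\supp{F}\subseteq\{0\}$. So I only need to verify that type~I and type~II shifts fall into this framework.

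First I would unpack Definition~\ref{kgsw}: in both types we have $\cfrak=0$, and additionally $\nu=0$ in type~I while $\supp{\nu}=\{0\}$ in type~II. In either case $\supp{\nu}\subseteq\{0\}$. Next, I would translate this from the scalar representing triplet $(\bfrak,\cfrak,\nu)$ to the operator representing triplet $(B,C,F)$ via Corollary~\ref{bcny}: since $\cfrak=0$, one obtains $C=0$ and $\supp{\nu}=\supp{F}$. Combining these gives $\supp{F}\subseteq\{0\}$ for any type~I or type~II CPD weighted shift.

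Finally, if $\wlam$ is additionally non-subnormal, Corollary~\ref{symfo} applies verbatim and yields that $\wlam$ is not similar to any subnormal operator. Applying this to an arbitrary non-subnormal $\wlam$ of type~I or~II completes the proof.

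I do not foresee a genuine obstacle: the proposition is a bookkeeping consequence of two results already in hand. The only points worth double-checking are that Corollary~\ref{bcny} is invoked in the correct direction (we read off both $C=0$ and the support identity from $\cfrak=0$) and that the hypothesis $\supp{F}\subseteq[0,1]$ implicit in Corollary~\ref{symfo} via Theorem~\ref{feqas} is automatic, since $\{0\}\subseteq[0,1]$.
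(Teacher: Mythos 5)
Your argument is correct and coincides with the paper's own proof: both reduce the claim to Corollary~\ref{symfo} by noting that types I and II force $\supp{\nu}\subseteq\{0\}$ and then passing to $\supp{F}\subseteq\{0\}$ via Corollary~\ref{bcny}. Your extra check that $\{0\}\subseteq[0,1]$ makes the hypothesis of Theorem~\ref{feqas} automatic is a sensible (if implicit in the paper) verification.
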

   \begin{proof}
It follows from Definition~\ref{kgsw} that $\supp{\nu}
\subseteq \{0\}$. In view of Corollary~\ref{bcny},
$\supp{F} \subseteq \{0\}$, so Corollary~\ref{symfo}
completes the proof.
   \end{proof}
We refer the reader to Theorem~\ref{bacyt}, which
implicitly contains the model $W_{a,b}$ for CPD
unilateral weighted shifts of both type I and type II.
Such $W_{a,b}$ is of type I (resp., type II) if and
only $\theta=0$ (resp., $\theta > 0$), where
$\theta:=1 - 2 a + a b$. Moreover, by
Corollary~\ref{abWn}, if $W_{a,b}$ is of type I, then
$W_{a,b}$ is subnormal if and only if $a=b=1$
(equivalently: $W_{a,b}$ is an isometry). In turn, if
$W_{a,b}$ is of type II, then $W_{a,b}$ is subnormal
if and only if $a<b=1$. Thus, in view of
Proposition~\ref{nydseq}, we arrived at the following
dichotomy property.
   \begin{align*}
   \begin{minipage}{75ex}
{\em If $\wlam\in \ogr{\ell^2}$ is a CPD unilateral
weighted shift of type I or type II, then it is either
subnormal or it is not similar to a subnormal
operator.}
   \end{minipage}
   \end{align*}
This dichotomy is no longer valid for unilateral
weighted shifts of type III (see Theorems~\ref{kdwq}
and \ref{ineqsuf}, and Example~\ref{3uwre}).

   Now we are ready to prove the main result of this
paper.
   \begin{proof}[Proof of Theorem~\ref{mainth}]
Consider first the case of an arbitrary CPD operator
$T\in \ogr{\hh}$. Let $(B,C,F)$ be the representing
triplet of $T$. Then, we have
   \begin{align*}
\|T^n h\|^2 \overset{\eqref{tynst}}= \|h\|^2 + n
\is{Bh}{h} + n^2 \is{Ch}{h} + \int_{\rbb_+} Q_n(x)
\is{F(\D x)h}{h}, \quad n\in \zbb_+.
   \end{align*}
Applying $\triangle^2$ to both sides of the above
equality and using \eqref{del2}, we obtain
   \begin{multline*}
\is{T^{*n}T^n h}{h} - 2 \is{T^{*(n+1)}T^{n+1} h}{h} +
\is{T^{*(n+2)}T^{n+2} h}{h}
   \\
= 2 \is{C h}{h} + \int_{\rbb_+} x^n \is{F(\D x)h}{h},
\quad h\in \hh, \, n \in \zbb_+.
   \end{multline*}
This can be rewritten as follows:
   \begin{align} \label{gynfrw}
\is{\bscr_2(T) T^n h}{T^n h} = 2 \is{C h}{h} +
\int_{\rbb_+} x^n \is{F(\D x)h}{h}, \quad h\in \hh, \,
n \in \zbb_+.
   \end{align}

Suppose now that $T=\wlam$. At this point it is worth
noting that the proof of conditions (i) and (ii) can
be carried out without the assumption that $\wlam$ is
not of type~I. Then, however, the measures $\mu_0$,
$\mu_1$ and $\rho$ defined below can be zero.

(i) Let $(\bfrak, \cfrak, \nu)$ be the scalar
representing triplet of $\wlam$. It follows from
Theorem~\ref{truplyt} and \eqref{gynfrw} that
   \begin{align} \label{yzldwa}
\is{\bscr_2(\wlam) \wlam^n e_0}{\wlam^n e_0} =
\int_{\rbb_+} x^n \D \mu_0(x), \quad h\in \hh, \, n
\in \zbb_+,
   \end{align}
where $\mu_0$ is the finite Borel measure on $\rbb_+$
defined by
   \begin{align} \label{mam0}
\mu_0(\varDelta) = \nu(\varDelta) + 2 \cfrak \delta_1
(\varDelta), \quad \varDelta \in \borel{\rbb_+}.
   \end{align}
Note that $\mu_0\neq 0$. Indeed, otherwise by
\eqref{yzldwa} and the fact that $\bscr_2(\wlam) \Ge
0$ (see \cite[Corollary~3.2.7(i)]{Ja-Ju-St22}),
$\bscr_2(\wlam) \wlam^n e_0=0$ for every $n\in
\zbb_+$, so $\bscr_2(\wlam)=0$, which contradicts the
assumption that $\wlam$ is not of type~I.

Regarding the operator $\bscr_2(\wlam)$, it is routine
to show that
   \begin{align} \label{zewq}
\bscr_2(\wlam) e_n = \beta_n e_n, \quad n \in \zbb_+.
   \end{align}
Since $\bscr_2(\wlam)\Ge 0$, we see that $\beta_n \Ge
0$ for every $n\in \zbb_+$. By \eqref{mur-hupy} and
\eqref{zewq}, we~get
   \begin{align*}
\is{\bscr_2(\wlam) \wlam^m e_0}{\wlam^n e_0} & =
\sqrt{\hat \lambda_m \hat \lambda_n} \;
\is{\bscr_2(\wlam) e_m}{e_n}
   \\
& = \sqrt{\hat \lambda_m \hat \lambda_n} \; \beta_m
\is{e_m}{e_n}, \quad m,n \in \zbb_+.
   \end{align*}
This yields
   \begin{align} \label{zserw}
\is{\bscr_2(\wlam) \wlam^m e_0}{\wlam^n e_0} = 0,
\quad m,n \in \zbb_+, \, m \neq n.
   \end{align}

Denote by $\mu_1$ the nonzero finite Borel measure on
$\rbb_+$ defined by
   \begin{align} \label{muqes}
\mu_1(\varDelta) = \mu_0 (\varphi^{-1}(\varDelta)),
\quad \varDelta \in \borel{\rbb_+},
   \end{align}
where $\varphi\colon \rbb_+ \to \rbb_+$ is the
function given by $\varphi(x)=\sqrt{x}$ for $x\in
\rbb_+$. Let $\rho$ be the nonzero finite Borel
measure on $\cbb$ defined by
   \begin{align} \label{rygso}
\rho(\varDelta) = \frac{1}{2\pi}\int_{\rbb_+}
\int_0^{2\pi} \chi_{\varDelta}(t \E^{\I \theta}) \D
\theta \D \mu_1(t), \quad \varDelta \in \borel{\cbb},
   \end{align}
where $\chi_{\varDelta}$ stands for the characteristic
function of $\varDelta$. Note that $\rho$ is compactly
supported (because $\mu_0$ and $\mu_1$ are so) and
rotation-invariant. Applying the standard measure
theory approximation procedure to \eqref{rygso}, we
deduce that for every bounded complex Borel function
$f$ on $\cbb$,
   \begin{align} \label{rygtro}
\int_{\cbb} f \D \rho = \frac{1}{2\pi}\int_{\rbb_+}
\int_0^{2\pi} f(t \E^{\I \theta}) \D \theta \D
\mu_1(t).
   \end{align}
It follows from \eqref{yzldwa}, \eqref{zserw} and
\cite[Theorem~1.6.12]{Ash00} that \allowdisplaybreaks
   \begin{align*}
\is{\bscr_2(\wlam) \wlam^m e_0}{\wlam^n e_0} & =
\int_{\rbb_+} \sqrt{x}^{m+n} \D \mu_0(x) \delta_{m,n}
   \\
& = \int_{\rbb_+} t^{m+n} \D \mu_1(t) \delta_{m,n}
   \\
& = \int_{\rbb_+} t^{m+n} \D \mu_1(t) \frac{1}{2\pi}
\int_0^{2\pi} \E^{\I (m-n) \theta} \D \theta
   \\
& \hspace{-1.8ex}\overset{\eqref{rygtro}}= \int_{\cbb}
z^{m} \bar z^{n} \D \rho(z), \quad m,n \in \zbb_+.
   \end{align*}
Summarizing, we have proved that
   \begin{align} \label{bisz}
\is{\bscr_2(\wlam) \wlam^m e_0}{\wlam^n e_0} =
\int_{\cbb} z^{m} \bar z^{n} \D \rho(z), \quad m,n \in
\zbb_+.
   \end{align}

Let $\ee=\lin \{e_n\colon n \in \zbb_+\}$. It can be
easily seen that $\bar \ee=\ell^2$ and
   \begin{align} \label{euwq}
\ee= \Big\{p(\wlam) e_0\colon p \in \cbb[z]\Big\}.
   \end{align}
Since $\bscr_2(\wlam) \Ge 0$, the map $\ee \times \ee
\ni (f,g) \longmapsto \is{f}{g}_2=\is{\bscr_2(\wlam)
f}{g} \in \cbb$ is a semi-inner product on $\ee$.
Denote by $\|\cdot\|_2$ the corresponding seminorm,
that is,
   \begin{align} \label{trnym}
\|h\|_2=\sqrt{\is{\bscr_2(\wlam) h}{h}}, \quad h\in
\ee.
   \end{align}
According to \eqref{bisz}, we have
   \begin{align} \label{wlez}
\is{\wlam^m e_0}{\wlam^n e_0}_2 = \int_{\cbb} z^{m}
\bar z^{n} \D \rho(z), \quad m,n \in \zbb_+.
   \end{align}
It follows that
   \begin{align} \label{nyrw}
\|p(\wlam)e_0\|_2^2 = \int_{\cbb} |p|^2 \D \rho, \quad
p \in \cbb[z].
   \end{align}
Set $\nn = \{h\in \ee\colon \|h\|_2=0\}$. Since
$\|\cdot\|_2$ is seminorm, $\nn$ is vector subspace of
$\ee$. Denote by $(\widehat \hh,
\isii{\cdot}{\mbox{-}})$ the Hilbert space completion
of the inner product space $(\ee/\nn,
\isii{\cdot}{\mbox{-}})$, where
   \begin{align} \label{treil}
\isii{f+\nn}{g+\nn}=\is{f}{g}_2, \quad f,g \in \ee.
   \end{align}
The corresponding norm is denoted by
$\vertiii{\cdot}$. This, together with \eqref{wlez},
yields
   \begin{align} \label{dyszek}
\isii{p(\wlam)e_0 + \nn}{q(\wlam)e_0 + \nn} =
\int_{\cbb} p \bar q \D \rho, \quad p,q \in \cbb[z].
   \end{align}

The next step is to construct a unitary isomorphism $U
\in \ogr{\widehat \hh, H^2(\rho)}$ and an operator
$J\in \ogr{\ell^2,\widehat \hh}$, the product of which
will be the required operator $X$. First, note that by
\eqref{euwq}, we have
   \begin{align} \label{gynts}
\ee/\nn = \{p(\wlam) e_0 + \nn\colon p \in \cbb[z]\}.
   \end{align}
Since $\ee/\nn$ is dense in $\widehat \hh$, it follows
from \eqref{dyszek} that there exists a unique unitary
isomorphism $U \in \ogr{\widehat \hh, H^2(\rho)}$ such
that
   \begin{align} \label{upser}
U (p(\wlam) e_0 +\nn) = p \;\; \text{a.e.\ $[\rho]$},
\quad p \in \cbb[z].
   \end{align}
Next, observe that
   \begin{align*}
\vertiii{f+\nn}^2 \overset{\eqref{treil}} = \|f\|_2^2
\overset{\eqref{trnym}} = \is{\bscr_2(\wlam) f}{f} \Le
\|\bscr_2(\wlam)\|\|f\|^2, \quad f\in \ee.
   \end{align*}
Since $\bar \ee = \ell^2$, there exists a unique
operator $J\in \ogr{\ell^2,\widehat \hh}$ such that
   \begin{align*}
Jh = h + \nn, \quad h\in \ee.
   \end{align*}
Clearly, $\overline{\ob{J}}=\widehat{\hh}$. Define the
operator $X\in \ogr{\ell^2,H^2(\rho)}$ by $X=UJ$. Then
$\overline{\ob{X}}=H^2(\rho)$. Moreover, we have
   \begin{align*}
(X\wlam)p(\wlam)e_0 & = UJ q(\wlam)e_0 = U(q(\wlam)e_0
+ \nn)
   \\
& \hspace{-2.2ex}\overset{\eqref{upser}}= q = M_z p
\overset{\eqref{upser}}= M_z U (p(\wlam)e_0 + \nn)
   \\
& = M_z UJp(\wlam)e_0 = (M_z X) p(\wlam)e_0 \;\;
\text{a.e.\ $[\rho]$}, \quad p \in \cbb[z],
   \end{align*}
where $q(z)=zp(z)$. By \eqref{euwq} and
$\bar{\ee}=\ell^2$, we conclude that $X\wlam = M_z X$,
which proves (i).

(ii) Since $\bar \ee = \ell^2$, (ii) is a direct
consequence of the following computation:
   \begin{align*}
\is{X^*Xf}{g}=\is{J^*Jf}{g}=\isii{Jf}{Jg} & =
\isii{f+\nn}{g+\nn}
   \\
& \hspace{-2.2ex}\overset{\eqref{treil}}= \is{f}{g}_2
= \is{\bscr_2(\wlam)f}{g}, \quad f,g\in \ee.
   \end{align*}

(iii) This is a direct consequence of the following
more general fact.
   \begin{align} \label{wcdpo}
   \begin{minipage}{60ex}
{\em Let $\wlam$ be a CPD unilateral
weighted shift. Then $\wlam$ is of type~III
if and only if $\|\cdot\|_2$ is a norm, or
equivalently if and only if $X$ is
injective, or equivalently if and only if
$\beta_n > 0$ for every $n\in \zbb_+$.}
   \end{minipage}
   \end{align}
To prove \eqref{wcdpo}, first note that
   \begin{align} \label{bexa}
\forall p \in \cbb[z]\colon p(\wlam)e_0=0 \implies
p=0.
   \end{align}
Indeed, if $p(z)=\sum_{j=0}^n \alpha_j z^j$ for some
$n\in \zbb_+$ with $\{\alpha_j\}_{j=0}^{\infty}
\subseteq \cbb$, then
   \begin{align*}
0=p(\wlam)e_0 = \sum_{j=0}^n \alpha_j \wlam^j e_0 =
\sum_{j=0}^n \alpha_j \sqrt{\hat \lambda_j} \; e_j,
   \end{align*}
so by the linear independence of
$\{e_k\}_{k=0}^{\infty}$, $\alpha_j=0$ for every $j=0,
\ldots, n$, and thus $p=0$. Next, using \eqref{nyrw}
and \eqref{bexa}, we see that $\|\cdot\|_2$ is a norm
if and only if
   \begin{align*}
\forall p \in \cbb[z]\colon \int_{\cbb} |p|^2 \D
\rho=0 \implies p=0,
   \end{align*}
or equivalently, by continuity of polynomials, if and
only if
   \begin{align} \label{fthwa}
\forall p \in \cbb[z] \colon \big[\;p=0 \text{ on }
\supp{\rho}\,\big] \implies p=0.
   \end{align}
Now there are two possibilities. First, $\supp{\rho}
\subseteq \{0\}$. Then the nonzero polynomial $p(z)=z$
vanishes on $\supp{\rho}$, which means that
\eqref{fthwa} does not hold. As a consequence,
$\|\cdot\|_2$ is not a norm. The second possibility is
that $\supp{\rho} \nsubseteq \{0\}$, that is, there
exists $w\in \supp{\rho} \setminus \{0\}$. Since
$\rho$ is rotation-invariant, we get
   \begin{align*}
\E^{\I \varphi} \supp{\rho} \subseteq \supp{\rho},
\quad \varphi\in [0,2\pi),
   \end{align*}
so $w \cdot \tbb \subseteq \supp{\rho}$. Hence,
according to the fundamental theorem of algebra,
\eqref{fthwa} holds, which means that $\|\cdot\|_2$ is
a norm. Summarizing, we have proved that $\|\cdot\|_2$
is a norm if and only if $\supp{\rho} \nsubseteq
\{0\}$. In view of \eqref{mam0}, \eqref{muqes} and
\eqref{rygso}, the latter holds if and only if
$\supp{\nu} \nsubseteq \{0\}$ or $\cfrak \neq 0$, or
equivalently if and only if $\wlam$ is of type~III. In
turn, in view of $\bscr_2(\wlam)\Ge 0$ and
\eqref{trnym}, the seminorm $\|\cdot\|_2$ is a norm if
and only if $\bscr_2(\wlam)|_{\ee}$ is injective.
Therefore, by \eqref{zewq}, this is equivalent to
$\beta_n > 0$ for all $n\in \zbb_+$, so it is
equivalent to $\jd{\bscr_2(\wlam)}=\{0\}$ and, by
(ii), this is equivalent to $\jd{X}=\{0\}$, which
proves \eqref{wcdpo}.

(iv) Suppose that $\wlam$ is of type~III. Using
\eqref{mam0}, \eqref{muqes} and \eqref{rygso}, we
deduce that $\supp{\rho} \nsubseteq \{0\}$. This
implies that $\|\zeta^n\|^2=\int_{\cbb} |z|^{2n}\D
\rho(z) > 0$ for every $n\in \zbb_+$, where $\zeta$ is
the identity function on $\cbb$. Applying
\eqref{rygtro}, we verify that the sequence
$\Big\{\frac{\zeta^n}{\|\zeta^n\|}\Big\}_{n=0}^{\infty}$
is an orthonormal basis of the Hilbert space
$H^2(\rho)$. Since
   \begin{align*}
M_z \Big(\frac{\zeta^n}{\|\zeta^n\|}\Big) =
\frac{\|\zeta^{n+1}\|}{\|\zeta^n\|}
\Big(\frac{\zeta^{n+1}}{\|\zeta^{n+1}\|}\Big), \quad n
\in \zbb_+,
   \end{align*}
we see that $M_z$ is unitarily equivalent to
the unilateral weighted shift $W_{\omegab}$
with weights $\omegab :=
\Big\{\frac{\|\zeta^{n+1}\|}
{\|\zeta^n\|}\Big\}_{n=0}^{\infty}$ (so
$W_{\omegab}$ is subnormal). Hence, by
\cite[Theorem~1.6.12]{Ash00}, the
corresponding formal moment sequence of
$W_{\omegab}$ is given by (see
\eqref{mur-hupy}),
   \begin{align*}
\frac{\|\zeta^1\|^2}{\|\zeta^0\|^2}
\frac{\|\zeta^2\|^2}{\|\zeta^1\|^2} \cdots
\frac{\|\zeta^n\|^2}{\|\zeta^{n-1}\|^2} =
\frac{\|\zeta^n\|^2}{\|\zeta^0\|^2}&=
\frac{1}{\rho(\cbb)} \int_{\cbb} |z|^{2n}\D \rho(z)
   \\
&=\frac{1}{\rho(\cbb)} \int_{\rbb_+} x^n\D \rho\circ
\phi^{-1}(z) \quad n \Ge 1.
   \end{align*}
This, together with \eqref{Stieq}, implies
that $W_{\omegab}$ is a subnormal unilateral
weighted shift with the Berger measure
$\frac{1}{\rho(\cbb)} \rho\circ \phi^{-1}$,
which proves (iv).

(v) Since $X$ has dense range, we see that $X$ has a
bounded inverse if and only if there exists
$\varepsilon>0$ such that $\|Xh\|^2 \Ge \varepsilon
\|f\|^2$ for every $f\in \ell^2$. By (ii), this is
equivalent to $\bscr_2(\wlam) \Ge \varepsilon I$.
Since $\bscr_2(\wlam)$ satisfies \eqref{zewq}, we
conclude that $X$ has a bounded inverse if and only if
\eqref{bydqa} holds. Finally, if condition
\eqref{bydqa} is satisfied, then according to
\eqref{wcdpo}, $\wlam$ is of type~III, so applying (i)
and (iv) completes the proof.
   \end{proof}
The following corollary is an immediate consequence of
Theorem~\ref{mainth}, \cite[Theorem~1]{Cl75} and the
fact that subnormal operators are hyponormal (see
\cite[Proposition~II.4.2]{Con91}).
   \begin{cor}
Let $\wlam\in \ogr{\ell^2}$ be a CPD
unilateral weighted shift of type~III. Then
there exists a subnormal unilateral weighted
shift $W_{\omegab}\in \ogr{\ell^2}$ and a
quasi-invertible operator $X\in
\ogr{\ell^2}$ such that $X\wlam=W_{\omegab}
X$. If this is the case, then
$\sigma(W_{\omegab}) \subseteq
\sigma(\wlam)$.
   \end{cor}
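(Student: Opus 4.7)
My plan is to bootstrap the corollary directly from Theorem~\ref{mainth} by conjugating the model $M_z$ acting on $H^2(\rho)$ into an equivalent presentation on $\ell^2$, and then to invoke Clary's spectral theorem for hyponormal operators to get the spectral inclusion. Since $\wlam$ is assumed to be of type~III, parts (i) and (iii) of Theorem~\ref{mainth} produce a quasi-invertible $X_0\in\ogr{\ell^2,H^2(\rho)}$ satisfying $X_0\wlam=M_z X_0$, while part (iv) tells us that $M_z\in\ogr{H^2(\rho)}$ is unitarily equivalent to a subnormal unilateral weighted shift $W_{\omegab}\in\ogr{\ell^2}$ with Berger measure $\frac{1}{\rho(\cbb)}\rho\circ\phi^{-1}$ (the unitary being implemented by the orthonormal basis $\{\zeta^n/\|\zeta^n\|\}_{n=0}^{\infty}$ of $H^2(\rho)$, as constructed in the proof of part (iv)).

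The first step is then purely formal: let $U\in\ogr{H^2(\rho),\ell^2}$ be the unitary with $UM_z=W_{\omegab}U$ and set $X:=UX_0\in\ogr{\ell^2}$. The operator $X$ is quasi-invertible as the composition of a quasi-invertible map with a unitary (injectivity is preserved, and $\overline{\ob{X}}=U\,\overline{\ob{X_0}}=UH^2(\rho)=\ell^2$). A one-line computation gives
\[
X\wlam=UX_0\wlam=UM_zX_0=W_{\omegab}UX_0=W_{\omegab}X,
\]
which establishes the first assertion of the corollary.

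For the spectral inclusion, the hypotheses $X\wlam=W_{\omegab}X$ and the quasi-invertibility of $X$ mean precisely that $\wlam$ is a quasi-affine transform of $W_{\omegab}$ in the sense defined in Subsection~1.3. Since subnormal operators are hyponormal (see \cite[Proposition~II.4.2]{Con91}), $W_{\omegab}$ is hyponormal, and Clary's theorem \cite[Theorem~1]{Cl75} applies to yield $\sigma(W_{\omegab})\subseteq\sigma(\wlam)$. This is exactly the setting in which Clary's argument functions: no hyponormality of $\wlam$ is required, only of the operator on the receiving side of the intertwining.

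I do not foresee a genuine obstacle. The only point that deserves care is keeping the direction of the quasi-affine transform consistent with the convention $XA=BX$ fixed in Subsection~1.3, so that Clary's inclusion indeed reads $\sigma(W_{\omegab})\subseteq\sigma(\wlam)$ rather than the reverse; once this bookkeeping is done, the corollary follows by a direct concatenation of the cited results.
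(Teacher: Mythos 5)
Your proof is correct and follows essentially the same route as the paper, which derives the corollary directly from Theorem~\ref{mainth} (parts (i), (iii), (iv)), Clary's theorem \cite[Theorem~1]{Cl75}, and the hyponormality of subnormal operators; your only addition is to write out explicitly the conjugation $X=UX_0$ transporting $M_z$ on $H^2(\rho)$ to the weighted shift $W_{\omegab}$ on $\ell^2$, and you correctly identify that Clary's inclusion requires hyponormality only of $W_{\omegab}$.
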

The inclusion $\sigma(M_z) \subseteq \sigma(\wlam)$ is
valid even if $\wlam$ is not of type III, but then by
\eqref{dynqe}, $\sigma(M_z)=\emptyset$ if $\wlam$ is
of type I and $\card{\sigma(M_z)}=1$ if $\wlam$ is of
type II (recall that \cite[Theorem~1]{Cl75} is true
even if the intertwining operator $X$ is not
injective).
   \begin{rem}
Regarding the proof of Theorem~\ref{mainth}, we make
the following observation. We show that the operator
$R:=U^* M_z U$ can be described explicitly. For, take
any $h\in \ee$. Then, by \eqref{euwq}, $h=p(\wlam)e_0$
for some $p\in \cbb[z]$. Since $\rho$ is compactly
supported, $r:=\sup\{|z|\colon z\in \supp{\rho}\} <
\infty$. Hence, we have
   \allowdisplaybreaks
   \begin{align} \notag
\vertiii{(\wlam h) + \nn}^2 &= \vertiii{(\wlam
p(\wlam)e_0) + \nn}^2
   \\ \notag
& \hspace{-2.2ex} \overset{\eqref{treil}} = \|\wlam
p(\wlam)e_0\|_2^2
   \\ \notag
& \hspace{-2.2ex}\overset{\eqref{nyrw}}=
\int_{\supp{\rho}} |zp(z)|^2 \D \rho(z)
   \\ \notag
& \Le r^2 \int_{\supp{\rho}} |p(z)|^2 \D \rho(z)
   \\ \label{ofgip}
& \hspace{-2.2ex} \overset{\eqref{nyrw}} = r^2
\|p(\wlam)e_0\|_2^2 = r^2 \vertiii{h + \nn}^2.
   \end{align}
This implies that $\wlam(\nn) \subseteq \nn$ and the
operator $\widetilde R\colon \ee/\nn \to \ee/\nn$
given by
   \begin{align} \label{rhns}
\widetilde R(h + \nn) = (\wlam h) + \nn, \quad h\in
\ee,
   \end{align}
is well defined on the quotient vector space
$\ee/\nn$. By \eqref{ofgip}, $\widetilde R$ is bounded
with respect to the norm $\vertiii{\cdot}$. In order
to conserve notation, we continue to write $\widetilde
R$ for the unique extension of the original operator
$\widetilde R$ to a bounded linear operator on
$\widehat \hh$. Finally, applying \eqref{upser} and
\eqref{rhns}, we obtain
   \begin{align*}
(U\widetilde R)(p(\wlam) e_0 + \nn) & = U(q(\wlam)e_0
+ \nn)
   \\
& = q = M_z p = (M_z U) (p(\wlam) e_0 + \nn)\;\;
\text{a.e.\ $[\rho]$}, \quad p\in \cbb[z],
   \end{align*}
where $q(z)=zp(z)$. Using \eqref{gynts}, we conclude
that $\widetilde R=U^*M_z U$, so $\widetilde R=R$.
Therefore, the formula \eqref{rhns} explicitly
describes the operator $R$.
   \hfill $\diamondsuit$
   \end{rem}
   \subsection{Quasi-affine transform}
According to the main result of this paper,
Theorem~\ref{mainth}, any CPD unilateral
weighted shift of type III (see
Definition~\ref{kgsw}) is a quasi-affine
transform of a subnormal unilateral weighted
shift. However, this result does not answer
the question of whether a CPD unilateral
weighted shift, which is a quasi-affine
transform of a subnormal unilateral weighted
shift, is of type III. In this subsection,
we will answer this question in the
negative. Namely, we will prove, among other
things, that a non-subnormal CPD unilateral
weighted shift of type I or type II is a
quasi-affine transform of any contractive
subnormal unilateral weighted shift (see
Proposition~\ref{alevy}). Summarizing, this
means that any CPD unilateral weighted shift
is a quasi-affine transform of a subnormal
unilateral weighted shift.

We begin by showing that a CPD unilateral
weighted shift which is a quasi-affine
transform of a subnormal operator must be a
quasi-affine transform of a multiplication
operator by the independent variable on some
$H^2(\tau)$. However, we have no guarantee
that such a measure $\tau$ is
rotation-invariant (cf.\
Theorem~\ref{mainth}).
   \begin{lem} \label{incsq}
Let $\wlam \in \ogr{\ell^2}$ be a CPD
unilateral weighted shift, $S\in \ogr{\hh}$
be a subnormal operator, $X\in \ogr{\ell^2,
\hh}$ be an operator with dense range such
that $X \wlam = S X$. Then there exist a
compactly supported finite Borel measure
$\tau$ on $\cbb$ and an operator $\widehat
X\in \ogr{\ell^2,H^2(\tau)}$ with dense
range such that $\widehat X \wlam = M_z
\widehat X$ and $X^*X=\widehat X^* \widehat
X$. In particular, if $X$ is
quasi-invertible $($resp.\ invertible$)$,
then so is~$\widehat X$.
   \end{lem}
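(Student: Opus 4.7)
The plan is to pass from $S$ to a cyclic subnormal functional model $M_z$ on some $H^2(\tau)$ via a unitary, and then to define $\widehat X$ as the composition of $X$ with that unitary.

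First I would show that $h_0 := X e_0$ is a cyclic vector for $S$. Iterating $SX = X\wlam$ gives $S^n h_0 = X\wlam^n e_0 = \sqrt{\hat\lambda_n}\, Xe_n$ for all $n \in \zbb_+$, so $\lin\{S^n h_0\colon n \in \zbb_+\} = X(\lin\{e_n\colon n \in \zbb_+\})$; since the polynomial span is dense in $\ell^2$ and $X$ is bounded with dense range, a routine two-step approximation shows that $\lin\{S^n h_0\}$ is dense in $\hh$.

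Next I would construct the model in the standard way. Let $G$ be the semispectral measure of $S$ and put $\tau(\varDelta) := \is{G(\varDelta)h_0}{h_0}$ for $\varDelta \in \borel{\cbb}$; this is a compactly supported finite Borel measure on $\cbb$. Formula \eqref{tobemom} then yields
\begin{align*}
\|p(S)h_0\|^2 = \int_{\cbb} |p|^2 \D\tau, \quad p \in \cbb[z],
\end{align*}
so the assignment $p \mapsto p(S) h_0$ is isometric with respect to the $H^2(\tau)$-norm and extends by continuity to an isometry $V\colon H^2(\tau) \to \hh$. Its range contains the dense subspace $\lin\{S^n h_0\}$, hence $V$ is unitary, and a one-line check on polynomials delivers $V M_z = S V$.

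Finally I would set $\widehat X := V^* X \in \ogr{\ell^2, H^2(\tau)}$. The identities $\widehat X\wlam = V^* SX = M_z V^* X = M_z\widehat X$ and $\widehat X^* \widehat X = X^* V V^* X = X^* X$ follow at once, and because $V^*$ is a Hilbert space isomorphism, density of range, injectivity, and boundedness of the inverse all transfer from $X$ to $\widehat X$. The only delicate point is the cyclicity of $h_0$; everything else is a mechanical application of the cyclic-subnormal functional model. It is worth noting that the CPD hypothesis on $\wlam$ is not actually consumed by the argument---it merely fixes the framework in which the lemma is used later in the paper.
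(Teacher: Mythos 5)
Your proposal is correct and follows essentially the same route as the paper: it passes to the cyclic functional model $M_z$ on $H^2(\tau)$ with $\tau(\cdot)=\is{G(\cdot)Xe_0}{Xe_0}$ (the paper uses the spectral measure of a normal extension, which gives the same $\tau$) and composes $X$ with the resulting unitary. The cyclicity of $Xe_0$ and the intertwining and $X^*X$-preservation checks are exactly as in the paper, so there is nothing to add.
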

   \begin{proof}
Clearly $X\wlam^n e_0 = S^n Xe_0$ for all
$n\in \zbb_+$. Thus we can assume that
$f_0:=Xe_0 \neq 0$. Since $X$ has dense
range and $e_0$ is cyclic for $\wlam$, $f_0$
is cyclic for $S$. The following argument is
well know (see e.g., \cite[Subsections~9 and
14]{St-Sz89}). If $N\in \ogr{\kk}$ is a
normal extension of $S$ and $E$ is its
spectral measure, then
   \begin{align*}
\is{S^n f_0}{S^m f_0} = \is{N^n f_0}{N^m
f_0} = \int_{\cbb} \bar z^m z^n \D \tau(z),
\quad m,n \Ge 0,
   \end{align*}
where
$\tau(\varDelta)=\is{E(\varDelta)f_0}{f_0}$
for $\varDelta \in \borel{\cbb}$. Thus there
exists a unique unitary isomorphism $U\in
\ogr{\hh,H^2(\tau)}$ such that $U(p(S)
f_0)=p$ a.e.\ $[\tau]$ for every $p\in
\cbb[z]$. As in the proof of
Theorem~\ref{mainth}, we get $US=M_z U$. Set
$\widehat X= UX\in \ogr{\ell^2, H^2(\tau)}$.
Then $\widehat X$ has dense range, $\widehat
X \wlam = M_z \widehat X$ and $\widehat X^*
\widehat X = X^*U^*U X=X^*X$.
   \end{proof}
In contrast to Theorem~\ref{mainth}, the
following result shows that a single
non-subnormal CPD unilateral weighted shift
is a quasi-affine transform of a huge number
of subnormal unilateral weighted shifts.
   \begin{pro}\label{alevy}
A non-subnormal CPD unilateral weighted
shift is a quasi-affine transform of any
contractive subnormal unilateral weighted
shift $($including the unilateral
shift\/$)$. Moreover, a subnormal unilateral
weighted shift of type I or type II has norm
$1$.
   \end{pro}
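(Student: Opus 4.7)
The plan is to produce an explicit diagonal intertwiner, the same construction working for every contractive subnormal target $W_{\omegab}$. Fix such a $W_{\omegab}\in \ogr{\ell^2}$; then $\omega_n \Le 1$ for every $n$, so $\hat{\omega}_n \Le 1$ by \eqref{mur-hupy}. The standing assumption $\bfrak \Ge 0$ combined with $\cfrak \Ge 0$, $\nu \Ge 0$ and the fact that $Q_n \Ge 0$ on $\rbb_+$ (immediate from \eqref{klaud}) gives, via \eqref{wnezero}, the lower bound $\hat{\lambda}_n \Ge 1$ for every $n$.

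I would then define $X \in \ogr{\ell^2}$ by $X e_n = c_n e_n$ with $c_n := \sqrt{\hat{\omega}_n / \hat{\lambda}_n} \in (0,1]$. This $X$ is a contraction, injective and has dense range, hence is a quasi-affinity. The equation $X \wlam = W_{\omegab} X$ reduces on the standard basis to $\lambda_n c_{n+1} = \omega_n c_n$, which is immediate from $\lambda_n^2 = \hat{\lambda}_{n+1}/\hat{\lambda}_n$ and $\omega_n^2 = \hat{\omega}_{n+1}/\hat{\omega}_n$. This proves the first assertion; non-subnormality of $\wlam$ is never used in the construction itself, but it is what makes the statement nontrivial.

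For the ``moreover'' part I would appeal to Proposition~\ref{rewa}. A subnormal weighted shift of type~I has $\nu = 0$ and $\cfrak = 0$, and (ii-b) forces $\bfrak = 0$, so $\hat{\lambda}_n = 1$ for every $n$ and $\wlam$ is the unilateral shift. A subnormal weighted shift of type~II has $\supp{\nu} = \{0\}$ and $\cfrak = 0$, and (ii-b) gives $\bfrak = -\nu(\{0\})$; plugging this into \eqref{wnezero} and using $Q_n(0) = n - 1$ for $n \Ge 1$ yields $\hat{\lambda}_n = 1 - \nu(\{0\})$ for every $n \Ge 1$. Consequently $\lambda_n = 1$ for $n \Ge 1$ and $\lambda_0^2 = 1 - \nu(\{0\}) \Le 1$, so $\|\wlam\| = 1$.

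The only real point to notice is that the standing convention $\bfrak \Ge 0$ delivers the uniform bound $\hat\lambda_n \Ge 1$, which is what allows the single diagonal operator $X$ to be bounded regardless of whether $\wlam$ is of type~I, II or III. No step strikes me as a serious obstacle.
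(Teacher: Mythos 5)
There is a genuine gap in the first assertion. Your boundedness argument for the diagonal intertwiner rests on the inequality $\hat\lambda_n\Ge 1$, which you derive from ``the standing assumption $\bfrak\Ge 0$''. But Proposition~\ref{alevy} makes no such assumption: the restriction $\bfrak\Ge 0$ is imposed in the paper only locally (after Theorem~\ref{wkwcpdws} and again in Subsection~\ref{Sec.3.2}, and it is restated explicitly as a hypothesis in Theorem~\ref{ineqsuf} precisely because it is not global). Indeed, the paper's own proof of Proposition~\ref{alevy} works through the cases of Table~1, where $\bfrak$ ranges over intervals such as $(\underline{\bfrak},\infty)$ and $(-\varGamma_1,\infty)$ with $\underline{\bfrak}<0$, and the ``moreover'' part itself produces subnormal type~II shifts with $\bfrak=-\nu(\{0\})<0$. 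Without $\bfrak\Ge 0$ the bound $\hat\lambda_n\Ge 1$ fails and the boundedness of $X$ (equivalently, by Lemma~\ref{wpwnc}, the condition $\sup_n\hat\omega_n/\hat\lambda_n<\infty$) is exactly the point that has to be proved. This is also where non-subnormality enters: the paper shows, via \cite[Theorem~5.2]{Ja-Ju-Le-St23} and Proposition~\ref{rewa}, that for a \emph{non-subnormal} CPD shift the boundary cases (e.g.\ $\cfrak=0$, $\bfrak=-\varGamma_1$, $\varGamma_2\Le 1$) cannot occur, whence $\lim_{n\to\infty}\hat\lambda_n=\infty$ in every case and $\hat\omega_n/\hat\lambda_n\to 0$. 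Your remark that non-subnormality ``is never used in the construction itself'' is the symptom of the missing step; what you have actually proved is the weaker statement that a CPD shift with $\bfrak\Ge 0$ is a quasi-affine transform of every contractive subnormal weighted shift.

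The ``moreover'' part of your argument is correct and is a pleasant alternative to the paper's route: you deduce $\bfrak=0$ (type~I) and $\bfrak=-\nu(\{0\})$, $\hat\lambda_n=1-\nu(\{0\})$ for $n\Ge 1$ (type~II) directly from condition (ii-b) of Proposition~\ref{rewa}, whereas the paper invokes \cite[Proposition~4.5]{Sh-At00} and \cite[Proposition~4.3.5]{Ja-Ju-St22}. Your computation gives slightly more, namely the explicit weight sequence $(\sqrt{1-\nu(\{0\})},1,1,\ldots)$, consistent with Corollary~\ref{abWn}.
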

   \begin{proof}
Let $\wlam\in \ogr{\ell^2}$ be a CPD
unilateral weighted shift with the
representing triplet $(\bfrak, \cfrak,
\nu)$.

Let us first consider the case where $\wlam$
is not subnormal. Set
   \allowdisplaybreaks
   \begin{align*}
\varOmega & = \bigcap_{n \in
\zbb_+}\Big\{t\in \rbb \colon 1 + t n +
\cfrak n^2 + \int_{\rbb_+} Q_n \D\nu > 0
\Big\},
      \\
\underline{\bfrak} & =\inf \varOmega,
   \\
\vartheta & = \sup{\supp{\nu}},
      \\
\varGamma_j & =\int_{\rbb_+}
\frac{1}{(1-x)^j} \D \nu(x) \text{ (provided
the integral exists)}, \quad j=1,2.
   \end{align*}
Recall that by \cite[Sec.~5]{Ja-Ju-St22},
$\underline{\bfrak} \in \rbb$. The
description of the set $\varOmega$ is given
in Table~1 and follows from
\cite[Theorem~5.2]{Ja-Ju-Le-St23}.
   \begin{table}
   \centering \caption{The description of
the set $\varOmega$.
   \\
\centering In cases \ding{172}, \ding{173}
and \ding{174}, $-\infty <
\underline{\bfrak} < 0$. In case \ding{175},
$-\varGamma_1 < \underline{\bfrak}< 0$.}
   \begin{tabular}{||c||c|c|c|c||c||}
   \hline Case & $\nu$ & $\cfrak$ &
$\varGamma_1$ & $\varGamma_2$ & $\varOmega$
   \\
   \hline \hline \ding{172} & $\vartheta >
1$ & & & & $(\underline{\bfrak},\infty)$
   \\
\hline \ding{173} & $\vartheta \Le 1$ &
$\cfrak > 0$ & & &
$(\underline{\bfrak},\infty)$
   \\
\hline \ding{174} & $\vartheta \Le 1$ &
$\cfrak = 0$ & $\varGamma_1 = \infty$ & &
$(\underline{\bfrak},\infty)$
   \\
\hline \ding{175} & $\vartheta \Le 1$ &
$\cfrak = 0$ & $\varGamma_1 < \infty$ &
$\varGamma_2 > 1$ &
$(\underline{\bfrak},\infty)$
   \\
\hline \ding{176} & $\vartheta \Le 1$ \&
$\nu=\delta_0$ & $\cfrak = 0$ & $\varGamma_1
< \infty$ & $\varGamma_2 =1$ &
$(-\varGamma_1,\infty)$
   \\
\hline \ding{177} & $\vartheta \Le 1$ \&
$\nu \neq \delta_0$ & $\cfrak = 0$ &
$\varGamma_1 < \infty$ & $\varGamma_2 =1$ &
$[-\varGamma_1,\infty)$
   \\
\hline \ding{178} & $\vartheta \Le 1$ &
$\cfrak = 0$ & $\varGamma_1 < \infty$ &
$\varGamma_2 < 1$ & $[-\varGamma_1,\infty)$
   \\
\hline
   \end{tabular}
   \end{table}
Note that under our assumptions, in cases
\ding{177} and \ding{178}, $\bfrak\in
(-\varGamma_1,\infty)$. Indeed, otherwise
$\varGamma_2\Le 1$, $\bfrak = - \varGamma_1$
and $\cfrak=0$, so in view of
Proposition~\ref{rewa}, $\wlam$ is
subnormal, a contradiction. Therefore,
according to Table~1 and
\cite[(5.6)]{Ja-Ju-Le-St23}, in cases
\ding{175}, \ding{176}, \ding{177} and
\ding{178}, $\lim_{n\to \infty} \hat
\lambda_n = \infty$. From Table~1 and the
proof of statement (i) of
\cite[Theorem~5.2]{Ja-Ju-Le-St23}, it
follows that in cases \ding{172}, \ding{173}
and \ding{174}, $\lim_{n\to \infty} \hat
\lambda_n = \infty$. As a consequence, in
all cases $\lim_{n\in \zbb_+} \hat \lambda_n
= \infty$.

Now let $W_{\omegab}\in \ogr{\ell^2}$ be any
contractive subnormal unilateral weighted
shift with weights
$\omegab=\{\omega_n\}_{n=0}^{\infty}$. Then
$\sup_{n\in \zbb_+} \omega_n =
\|W_{\omegab}\| \le 1$, which implies that
$\hat \omega_n \Le 1$ for every $n \in
\zbb_+$. Combined with $\lim_{n\in \zbb_+}
\hat \lambda_n = \infty$, this implies that
$\lim_{n \to \infty} \frac{\hat
\omega_n}{\hat \lambda_n} =0$. Thus, by
Lemma~\ref{wpwnc}, $\wlam$ is a quasi-affine
transform of $W_{\omegab}$.

Now suppose that $\wlam$ is subnormal. If
$\wlam$ is of type I, then by
\cite[Proposition~4.5]{Sh-At00}, $\wlam$ is
an isometry, so $\|\wlam\|=1$. If $\wlam$ is
of type II, then by Corollary~\ref{bcny},
$C=0$ and $\supp{F}=\{0\}$, where $(B,C,F)$
is the representing triplet of $\wlam$.
Therefore, by
\cite[Theorem~3.2.5(b)]{Ja-Ju-St22},
$\supp{M}=\{0\}$, so condition (i) of
\cite[Proposition~4.3.5]{Ja-Ju-St22} is
satisfied. This, together with assertion (b)
of \cite[Proposition~4.3.5]{Ja-Ju-St22},
implies that\footnote{In fact, we have
proved more, namely that if $T\in \ogr{\hh}$
is a nonzero subnormal operator with the
representing triplet $(B,C,F)$ such that
$\supp{F}\subseteq \{0\}$, then $\|T\|=1$.}
$\|\wlam\|=1$. This completes the proof.
   \end{proof}
   \begin{rem}
Note that if $W_{\omegab}\in \ogr{\ell^2}$
is a subnormal unilateral weighted shift
with the Berger measure $\mu$ such that
$\theta_1:=\inf{\supp{\mu}} > 1$ and $\wlam$
is a CPD unilateral weighted shift with the
scalar representing triplet $(\bfrak,
\cfrak, \nu)$ such that $1 <
\theta_2:=\sup{\supp{\nu}} < \theta_1$, then
$W_{\omegab}$ is a quasi-affine transform of
$\wlam$. Indeed, by \eqref{Stieq} and
\eqref{wnezero} we have
   \allowdisplaybreaks
   \begin{align*}
\frac{\hat \lambda_n}{\hat \omega_n} & =
\frac{1 + \bfrak n + \cfrak n^2 +
\int_{[0,\theta_2]} Q_n
\D\nu}{\int_{[\theta_1, \infty)} x^n \D
\mu(x)}
   \\
& \Le \frac{1 + \bfrak n + \cfrak n^2 +
\int_{[0,1)} Q_n \D\nu + \int_{[1,\theta_2]}
Q_n \D\nu}{\theta_1^n}
   \\
& \Le \frac{1 + \bfrak n + \cfrak
n^2}{\theta_1^n} + \frac{1}{\theta_1^n}
\nu(\rbb_+)\frac{(n-1)n}{2}(1 + \theta_2^n),
\quad n \in \zbb_+.
   \end{align*}
Since $1 < \theta_2 < \theta_1$, we see that
$\lim_{n\to \infty} \frac{\hat
\lambda_n}{\hat \omega_n} =0$. Therefore, by
Lemma~\ref{wpwnc}, $W_{\omegab}$ is a
quasi-affine transform of $\wlam$. Clearly,
if $\cfrak
> 0$, then $\wlam$ is not subnormal (see
Proposition~\ref{rewa} for other criteria).
   \hfill $\diamondsuit$
   \end{rem}
   \subsection{\label{Sec.2.3}Criteria for
type III} In this subsection we provide
criteria for determining whether a CPD
unilateral weighted shift is of type III,
which will consequently allow us to
establish criteria for determining whether
it is of type I or II. The criteria can be
found in Theorem~\ref{bacyt} (see also
Remark~\ref{wymem}). One of the criteria
refers to the seminorm $\|\cdot\|_2$, which
appears in the proof of
Theorem~\ref{mainth}. Another refers to the
unilateral weighted shift $W_{a,b}$ that was
discussed in
\cite[Example~4.3.6]{Ja-Ju-St22}. Let us
recall that $W_{a,b}\in \ogr{\ell^2}$ is the
unilateral weighted shift with weights
defined by
   \begin{align} \label{wysgi}
\lambda_n =
   \begin{cases}
\sqrt{a} & \text{ if } n=0,
   \\[1ex]
\sqrt{\frac{1 + n(b-1)}{1 + (n-1)(b-1)}} & \text{ if }
n \Ge 1,
   \end{cases}
   \end{align}
where $a \in (0,\infty)$ and $b\in [1,\infty)$. Note
that $a=\lambda_0^2$ and $b=\lambda_1^2$.

We will now give the aforementioned criteria for type
III.
   \begin{thm} \label{bacyt}
Suppose that $\wlam\in \ogr{\ell^2}$ is CPD. Let
$\|\cdot\|_2$ be as in \eqref{trnym} and
$\mcal=\bigvee_{j=1}^{\infty} \{e_j\}$. Then the
following conditions are equivalent{\em :}
   \begin{enumerate}
   \item[(i)] $\wlam|_{\mcal}$ is not a $2$-isometry,
   \item[(ii)] $\wlam$ is of type~III,
   \item[(iii)] $\wlam \neq W_{a,b}$ for all  $a \in
(0,\infty)$ and $b\in [1,\infty)$ such that
   \begin{align} \label{osqab}
1 - 2 a + a b \Ge 0,
   \end{align}
   \item[(iv)] $\|\cdot\|_2$ is a norm.
   \end{enumerate}
Moreover, if $\wlam = W_{a,b}$ for some $a \in
(0,\infty)$ and $b\in [1,\infty)$ satisfying
\eqref{osqab}, then $\wlam$ is of type~I if and only
if equality holds in~\eqref{osqab}.
   \end{thm}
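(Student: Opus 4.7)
The common backbone of the proof is the identity
\[
\beta_n \;=\; \frac{(\triangle^2 \hat{\lambdab})_n}{\hat\lambda_n} \;=\; \frac{1}{\hat\lambda_n}\int_{\rbb_+} x^n \,\D \mu_0(x), \quad n \in \zbb_+,
\]
where $\mu_0 := \nu + 2\cfrak\,\delta_1$. The first equality is a direct computation using $\lambda_n^2 = \hat\lambda_{n+1}/\hat\lambda_n$ together with \eqref{zewq}; the second is precisely \eqref{tryjg} from the proof of Proposition~\ref{rewa}. This single identity translates every condition in the theorem into a statement about the measure $\mu_0$, i.e., about $\supp{\nu}$ and~$\cfrak$.

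First I would handle (i)$\,\Leftrightarrow\,$(ii). Since $\mcal$ is invariant under $\wlam$, the restriction $\wlam|_{\mcal}$ is a unilateral weighted shift with weights $\{\lambda_n\}_{n \Ge 1}$, and the same computation as \eqref{zewq} yields $\bscr_2(\wlam|_{\mcal}) e_n = \beta_n e_n$ for $n \Ge 1$. Hence $\wlam|_{\mcal}$ is a $2$-isometry iff $\int_{\rbb_+} x^n \D \mu_0(x) = 0$ for every $n \Ge 1$, iff $\supp{\mu_0} \subseteq \{0\}$; since $\delta_1$ is concentrated at $1 \neq 0$, this is equivalent to $\cfrak = 0$ and $\supp{\nu} \subseteq \{0\}$, i.e., to $\wlam$ being \emph{not} of type~III. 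The equivalence (ii)$\,\Leftrightarrow\,$(iv) is already established inside the proof of Theorem~\ref{mainth}: it is exactly property \eqref{wcdpo}.

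For (ii)$\,\Leftrightarrow\,$(iii) together with the moreover statement, I would parametrize both sides explicitly. Suppose first $\wlam = W_{a,b}$ with $a \in (0,\infty)$, $b \in [1,\infty)$. Telescoping \eqref{wysgi} gives $\hat\omega_n = a\bigl(1 + (n-1)(b-1)\bigr)$ for $n \Ge 1$ and $\hat\omega_0 = 1$, whence
\[
(\triangle^2 \hat{\omegab})_n \;=\; (1 - 2a + ab)\,\delta_{n,0}, \quad n \in \zbb_+,
\]
so the backbone identity yields $\mu_0 = (1 - 2a + ab)\,\delta_0$. Since $\delta_0$ and $\delta_1$ have disjoint supports and $\nu(\{1\}) = 0$, uniqueness in $\mu_0 = \nu + 2\cfrak\,\delta_1$ forces $\cfrak = 0$ and $\nu = (1 - 2a + ab)\,\delta_0$. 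Therefore $\wlam$ is of type~I when $1 - 2a + ab = 0$ and of type~II when $1 - 2a + ab > 0$; in either case $\wlam$ is not of type~III. Conversely, if $\wlam$ is of type~I or type~II, then $\cfrak = 0$ and $\nu = c\,\delta_0$ for some $c \Ge 0$; using $Q_n(0) = \max(n-1, 0)$ in \eqref{wnezero} together with the standing assumption $\bfrak \Ge 0$, a direct comparison with \eqref{wysgi} matches $\wlam$ to $W_{a,b}$ with $a := 1 + \bfrak \Ge 1$ and $b := (1 + 2\bfrak + c)/(1+\bfrak) \Ge 1$, while $1 - 2a + ab = c \Ge 0$ confirms \eqref{osqab}. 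This simultaneously delivers (ii)$\,\Leftrightarrow\,$(iii) and the moreover dichotomy.

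The only nontrivial step is the telescoping comparison of $\lambda_n = \sqrt{\hat\lambda_{n+1}/\hat\lambda_n}$ coming from \eqref{wnezero} with the piecewise formula \eqref{wysgi}; the indices $n = 0, 1$ require separate attention because $Q_0(0) = Q_1(0) = 0$ while $Q_n(0) = n - 1$ for $n \Ge 2$. Once that matching is verified, all four equivalences and the moreover statement follow from the backbone identity together with \eqref{wcdpo}.
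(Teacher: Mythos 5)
Your proof is correct, but it takes a genuinely different (and more self-contained) route than the paper's. The paper proves (i)$\Leftrightarrow$(ii) by passing through the planar measure $\rho$ constructed in the proof of Theorem~\ref{mainth}: it shows $\supp{\rho}\subseteq\{0\}$ forces $\|p(\wlam)e_0\|_2=0$ for all $p$ with $p(0)=0$, hence $\bscr_2(\wlam)|_{\mcal}=0$, and conversely. You instead work entirely on $\rbb_+$ with the identity $\beta_n=\hat\lambda_n^{-1}\int_{\rbb_+}x^n\,\D\mu_0(x)$, $\mu_0=\nu+2\cfrak\delta_1$ (which is exactly \eqref{estwq} in the proof of Lemma~\ref{nyttrs}), reducing everything to $\supp{\mu_0}\subseteq\{0\}$; this is cleaner and avoids $\rho$ altogether. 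For (i)$\Leftrightarrow$(iii) and the ``moreover'' part, the paper leans on the external characterization of $2$-isometric weighted shifts in \cite[Lemma~6.1]{Ja-St01}, whereas you compute the formal moment sequence of $W_{a,b}$ by telescoping \eqref{wysgi}, obtain $(\triangle^2\hat{\omegab})_n=(1-2a+ab)\delta_{n,0}$, and read off $\cfrak=0$, $\nu=(1-2a+ab)\delta_0$ from uniqueness of the representing measure; this buys independence from the cited lemma at the cost of a short explicit computation. The treatment of (ii)$\Leftrightarrow$(iv) via \eqref{wcdpo} is identical to the paper's.

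One small repair is needed in your converse direction (type I or II $\Rightarrow$ $\wlam=W_{a,b}$): you invoke ``the standing assumption $\bfrak\Ge0$'' to get $b=(1+2\bfrak+c)/(1+\bfrak)\Ge1$, but Theorem~\ref{bacyt} carries no such hypothesis (the paper's blanket remark about nonnegative $b$ concerns Theorem~\ref{wkwcpdws} and is reinstated only in Theorem~\ref{ineqsuf}). The gap closes without it: since $\hat\lambda_n=(1+\bfrak)+(n-1)(\bfrak+c)$ must be positive for every $n\Ge1$, one automatically has $1+\bfrak>0$ (so $a>0$) and $\bfrak+c\Ge0$ (so $b\Ge1$), and $1-2a+ab=c\Ge0$ as you computed. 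With that adjustment the argument is complete.
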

   \begin{proof}
Recall that by \cite[Corollary~3.2.7(i)]{Ja-Ju-St22},
$\bscr_2(\wlam)\Ge 0$. Below we use the notation from
the proof of Theorem~\ref{mainth}.

(i)$\Rightarrow$(ii) Suppose, to the contrary, that
$\wlam$ is not of type~III, that is,
$\supp{\nu}\subseteq \{0\}$ and $\cfrak=0$. Then
according to \eqref{mam0}, $\supp{\mu_0}\subseteq
\{0\}$ and so $\supp{\rho}\subseteq \{0\}$. We
therefore have
   \begin{align} \notag
\|\sqrt{\bscr_2(\wlam)}p(\wlam)e_0\|^2 &
\overset{\eqref{trnym}} = \|p(\wlam)e_0\|_2^2
   \\ \label{gpezx}
& \overset{\eqref{nyrw}}= \int_{\cbb} |p|^2 \D \rho
=|p(0)|^2 \rho(\{0\}) =0, \quad p \in \cbb_0[z],
   \end{align}
where $\cbb_0[z]=\{p\in \cbb[z]\colon p(0)=0\}$. This
implies that $\bscr_2(\wlam)p(\wlam)e_0=0$ for every
$p\in \cbb_0[z]$, or equivalently,
$\bscr_2(\wlam)|_{\mcal}=0$. Thus we have
   \begin{align} \label{hmisw}
\|h\|^2 - 2 \|\wlam h\|^2 + \|\wlam^2 h\|^2 = 0, \quad
h\in \mcal,
   \end{align}
or equivalently, $\wlam|_{\mcal}$ is a $2$-isometry, a
contradiction.

(ii)$\Rightarrow$(i) Suppose, to the contrary, that
$\wlam|_{\mcal}$ is a $2$-isometry. Then identity
\eqref{hmisw} holds, so
   \begin{align*}
\sqrt{\bscr_2(\wlam)}p(\wlam)e_0=0, \quad p\in
\cbb_0[z].
   \end{align*}
Arguing as in \eqref{gpezx}, we deduce that
$\int_{\cbb} |p|^2 \D \rho=0$ for every $p\in
\cbb_0[z]$. Substituting $p(z)=z$, we get $\int_{\cbb}
|z|^2 \D \rho(z)=0$, which implies that $\supp{\rho}
\subseteq \{0\}$. As a consequence, $\supp{\mu_0}
\subseteq \{0\}$ and in view of \eqref{mam0},
$\supp{\nu} \subseteq \{0\}$ and $\cfrak=0$, which
contradicts the assumption that $\wlam$ is of
type~III.

(i)$\Rightarrow$(iii) This can be done by negating
(iii) and checking that then $\wlam|_{\mcal}$ is a
$2$-isometry (for this one can use
\cite[Lemma~6.1]{Ja-St01}).

(iii)$\Rightarrow$(i) Suppose, to the contrary, that
$\wlam|_{\mcal}$ is a $2$-isometry. Set
$a=\lambda_0^2$ and $b= \lambda_1^2$. Since
$\bscr(\wlam) \Ge 0$ (see
\cite[Corollary~3.2.7(i)]{Ja-Ju-St22}), we infer from
\eqref{zewq} that $1 - 2 a + a b = \beta_0 \Ge 0$.
Since $\wlam|_{\mcal}$ is a $2$-isometry, it follows
from \cite[Lemma~6.1]{Ja-St01} that $b\in [1,\infty)$
and $\wlam=W_{a,b}$, which contradicts (iii).

(ii)$\Leftrightarrow$(iv) Use \eqref{wcdpo}.

To prove the ``moreover'' part, note that if $\wlam$
is a $2$-isometry, then by \cite[Lemma~6.1]{Ja-St01},
$\beta_0=0$, or equivalently, equality holds
in~\eqref{osqab}. Conversely, if $\beta_0 = 0$, then,
as shown in \cite[Example~4.3.6]{Ja-Ju-St22},
$\bscr_2(\wlam)=0$, so $\wlam$ is a $2$-isometry.
   \end{proof}
The following is a direct consequence of
Theorem~\ref{bacyt}.
   \begin{cor} \label{mybtw}
Suppose that $\wlam\in \ogr{\ell^2}$ is CPD and is not
of type~I. Let $\|\cdot\|_2$ be as in \eqref{trnym}
and $\mcal=\bigvee_{j=1}^{\infty} \{e_j\}$. Then the
following are equivalent{\em :}
   \begin{enumerate}
   \item[(i)] $\wlam|_{\mcal}$ is not a $2$-isometry,
   \item[(ii)] $\wlam$ is of type~III,
   \item[(iii)] $\wlam \neq W_{a,b}$ for all $a \in (0,\infty)$ and $b\in
[1,\infty)$ satisfying $1 - 2 a + a b > 0$,
   \item[(iv)] $\|\cdot\|_2$ is a norm,
   \end{enumerate}
   \end{cor}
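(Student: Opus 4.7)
The plan is to deduce Corollary~\ref{mybtw} directly from Theorem~\ref{bacyt}, using the additional hypothesis that $\wlam$ is not of type~I to upgrade the non-strict inequality $1 - 2a + ab \Ge 0$ in condition (iii) of the theorem to the strict inequality $1 - 2a + ab > 0$ stated here. Conditions (i), (ii) and (iv) of the corollary are identical to those of Theorem~\ref{bacyt}, so their mutual equivalence is inherited verbatim, and the entire content of the corollary reduces to verifying (ii)$\Leftrightarrow$(iii).

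The implication (ii)$\Rightarrow$(iii) is essentially free: condition (iii) of the corollary is formally \emph{weaker} than condition (iii) of Theorem~\ref{bacyt}, since it rules out $\wlam = W_{a,b}$ for strictly fewer pairs $(a,b)$. Hence Theorem~\ref{bacyt} delivers it at once from (ii).

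For the converse (iii)$\Rightarrow$(ii), I would argue by contradiction. Suppose (iii) holds but $\wlam$ is not of type~III. Then Theorem~\ref{bacyt} supplies $a \in (0,\infty)$ and $b \in [1,\infty)$ with $\wlam = W_{a,b}$ and $1 - 2a + ab \Ge 0$. Condition (iii) of the corollary forbids the strict inequality, so necessarily $1 - 2a + ab = 0$; but the ``moreover'' clause of Theorem~\ref{bacyt} then identifies such a $W_{a,b}$ as a $2$-isometry, equivalently as being of type~I, contradicting the standing hypothesis. There is no serious obstacle in this argument; the single point requiring attention is the boundary equality $1 - 2a + ab = 0$, and it is precisely the ``not of type~I'' assumption that rules it out via the ``moreover'' part of Theorem~\ref{bacyt}.
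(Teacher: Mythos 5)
Your proposal is correct and matches the paper's intent: the paper simply states that Corollary~\ref{mybtw} is a direct consequence of Theorem~\ref{bacyt}, and your argument—inheriting (i)$\Leftrightarrow$(ii)$\Leftrightarrow$(iv) verbatim and using the ``moreover'' clause together with the ``not of type~I'' hypothesis to exclude the boundary case $1-2a+ab=0$—is exactly the deduction the authors leave to the reader.
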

The answer to the question of when $W_{a,b}$
is subnormal is given below.
   \begin{cor}  \label{abWn}
Let $a \in (0,\infty)$ and $b\in
[1,\infty)$. Then $W_{a,b}$ is subnormal if
and only if $a \Le b=1$. If $a=b=1$, then
$W_{a,b}$ is an isometry, hence of type I.
If $a<b=1$, then $W_{a,b}$ is of type II.
Moreover, if $W_{a,b}$ is subnormal, then
$\|W_{a,b}\|=1$.
   \end{cor}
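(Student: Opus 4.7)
The plan is to compute the formal moment sequence of $W_{a,b}$ in closed form and then read off the subnormality statement directly from Berger's theorem (Theorem~\ref{Ber-G-W}). By \eqref{mur-hupy} and \eqref{wysgi}, the product
\[
\hat\lambda_n = a \prod_{k=1}^{n-1} \frac{1+k(b-1)}{1+(k-1)(b-1)}
\]
telescopes, yielding $\hat\lambda_n = a\bigl(1+(n-1)(b-1)\bigr)$ for $n\Ge 1$, with $\hat\lambda_0=1$.

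For the ``only if'' direction I would invoke the log-convexity inequality $\hat\lambda_{n-1}\hat\lambda_{n+1} \Ge \hat\lambda_n^2$ that every Stieltjes moment sequence must satisfy (apply Cauchy--Schwarz to $x^{(n-1)/2}$ and $x^{(n+1)/2}$). Setting $c=b-1$ and $u=1+(n-1)c$, a direct computation gives
\[
\hat\lambda_{n-1}\hat\lambda_{n+1}-\hat\lambda_n^2 = a^2\bigl((u-c)(u+c) - u^2\bigr) = -a^2 c^2.
\]
If $b>1$ this is strictly negative for every $n\Ge 2$, so Berger's theorem rules out subnormality. If $b=1$ but $a>1$, the same inequality at $n=1$ reads $1\cdot a \Ge a^2$, forcing $a\Le 1$, a contradiction. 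Hence subnormality of $W_{a,b}$ forces $b=1$ and $a\Le 1$.

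For the ``if'' direction I would exhibit the Berger measure explicitly: when $a=b=1$ all weights equal $1$, so $W_{1,1}$ is the unilateral shift with Berger measure $\delta_1$, and is an isometry; when $a<b=1$, the probability measure $\mu = (1-a)\delta_0 + a\delta_1$ satisfies $\int_{\rbb_+} x^n \D\mu = a = \hat\lambda_n$ for $n\Ge 1$ and $\int_{\rbb_+} \D\mu = 1 = \hat\lambda_0$, so Theorem~\ref{Ber-G-W} gives subnormality. The type classification then follows from the ``moreover'' clause of Theorem~\ref{bacyt} together with the discussion preceding the corollary: $W_{a,b}$ is of type~I iff $\theta:=1-2a+ab=0$ and of type~II iff $\theta>0$. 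For $a=b=1$ we have $\theta=0$ (type~I, consistent with isometricity), while for $a<b=1$ we have $\theta=1-a>0$ (type~II). The norm identity $\|\wlam\|=\sup_n \lambda_n$, combined with $\lambda_0=\sqrt{a}\Le 1$ and $\lambda_n=1$ for $n\Ge 1$, immediately gives $\|W_{a,b}\|=1$ (alternatively, invoke the last assertion of Proposition~\ref{alevy}). No step is a serious obstacle: the content reduces to a telescoping product, a single Cauchy--Schwarz log-convexity check, and one explicit measure.
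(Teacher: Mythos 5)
Your proof is correct, and it reaches the ``only if'' direction by a genuinely different route than the paper. You compute $\hat\lambda_n=a\bigl(1+(n-1)(b-1)\bigr)$ in closed form and exclude $b>1$ via the Cauchy--Schwarz log-convexity test $\hat\lambda_{n-1}\hat\lambda_{n+1}\Ge\hat\lambda_n^2$, valid for every Stieltjes moment sequence; the identity $\hat\lambda_{n-1}\hat\lambda_{n+1}-\hat\lambda_n^2=-a^2(b-1)^2$ (for $n\Ge 2$) does all the work, and the instance $n=1$ forces $a\Le 1$ when $b=1$. The paper argues structurally instead: hyponormality gives increasing weights, hence $a\Le b$, and the restriction $W_{a,b}|_{\mcal}$ to $\mcal=\bigvee_{j=1}^{\infty}\{e_j\}$ is simultaneously subnormal and unitarily equivalent to a $2$-isometric weighted shift by \cite[Lemma~6.1]{Ja-St01}, hence an isometry by \cite[Theorem~3.4]{Ja-St01}, forcing $b=1$. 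Your route is more elementary and self-contained: log-convexity of $\hat{\lambdab}$ is hyponormality of the shift in disguise, but you verify the needed inequality by hand rather than importing two external results; the paper's route is less computational and explains conceptually why $b=1$ must occur. The converse direction (the explicit Berger measure $(1-a)\delta_0+a\delta_1$), the type classification via $\theta=1-2a+ab$ together with Theorem~\ref{bacyt}, and the norm computation all coincide with the paper's argument, so no gap remains.
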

   \begin{proof}
Suppose that $W_{a,b}$ is subnormal. Since
$W_{a,b}$ is hyponormal, its weight sequence
is increasing (see \cite[Lemma, p.\
83]{shi74}), so $a\Le b$. Let
$\mcal=\bigvee_{j=1}^{\infty} \{e_j\}$. It
is clear that $W_{a,b}|_{\mcal}$ is
subnormal and, according to \eqref{wysgi}
and \cite[Lemma~6.1]{Ja-St01}, is unitarily
equivalent to the $2$-isometric unilateral
weighted shift $W_{\omegab}$ with weights
$\omegab :=\Big\{\sqrt{\frac{1 +
(n+1)(b-1)}{1 +
n(b-1)}}\Big\}_{n=0}^{\infty}$. Since
$W_{\omegab}$ is both subnormal and
$2$-isometric, $W_{\omegab}$ is isometric
(see, e.g., \cite[Theorem~3.4]{Ja-St01}), so
$b=1$. This yields $a \Le b=1$. Since
$\|W_{a,b}\|^2= \max\{a,b\}$ (see
\cite[Example~4.3.6]{Ja-Ju-St22}), we
conclude that $\|W_{a,b}\|= 1$ (cf.\
Proposition~\ref{alevy}).

Conversely, if $a \Le b=1$, then, by \eqref{wysgi},
the sequence $\lambdab=\{\lambda_n\}_{n=0}^{\infty}$
of weights of $W_{a,b}$ takes the form
   \begin{align} \label{cygseq}
\lambdab = (\sqrt{a}, 1, 1, 1, \ldots),
   \end{align}
so $\hat{\lambdab}=(1, a, a, a, \ldots)$, and
consequently
   \begin{align*}
\hat \lambda_n= \int_{\rbb_+} x^n \D \mu_a(x), \quad
n\in \zbb_+,
   \end{align*}
where $\mu_a:= (1-a)\delta_0 + a \delta_1$. By the
Berger theorem (see Theorem~\ref{Ber-G-W}), $W_{a,b}$
is a subnormal operator with the Berger measure
$\mu_a$.

If $a=b=1$, then, by \eqref{cygseq},
$W_{a,b}$ is an isometry, so it is of type
I. In turn, if $a<b=1$, then, $1 - 2 a + a b
= 1-a > 0$, so by Theorem~\ref{bacyt},
$W_{a,b}$ is of type II. This completes the
proof.
   \end{proof}
Suppose that $\wlam$ is CPD. It follows from
\eqref{wcdpo} that $\wlam$ is of type~III if and only
if the sequence $\{\beta_n\}_{n=0}^{\infty}$ defined
by \eqref{bydqa} has positive terms (recall that
$\beta_n \Ge 0$ for all $n\in \zbb_+$). We will now
show that the sequence $\{\beta_n\}_{n=0}^{\infty}$
satisfies the following dichotomy rule:
   \begin{align*}
   \begin{minipage}{53ex}
{\em Either all terms of $\{\beta_n\}_{n=0}^{\infty}$
are positive, or at most the zero term $\beta_0$ is
positive.}
   \end{minipage}
   \end{align*}
   \begin{pro} \label{kkra}
Suppose that $\wlam\in \ogr{\ell^2}$ is CPD and $k \in
\zbb_+$. Let $\{\beta_n\}_{n=0}^{\infty}$ be as in
\eqref{bydqa}. Then the following conditions are
equivalent{\em :}
   \begin{enumerate}
   \item[(i)] $\beta_k = 0$,
   \item[(ii)] $\wlam|_{\mcal_k}$ is a $2$-isometry,
where $\mcal_k:=\bigvee_{j=k}^{\infty} \{e_j\}$.
   \end{enumerate}
Moreover, if $\beta_k = 0$, then $\beta_j=0$ for every
integer $j\Ge 1$ and $\wlam=W_{a,b}$ with $a \in
(0,\infty)$ and $b\in [1,\infty)$ satisfying
\eqref{osqab}.
  \end{pro}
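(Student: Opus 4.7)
The plan is to pivot on the identity
\begin{align*}
\beta_n \hat{\lambda}_n = \hat{\lambda}_{n+2} - 2\hat{\lambda}_{n+1} + \hat{\lambda}_n = 2\cfrak + \int_{\rbb_+} x^n \D\nu(x), \quad n \in \zbb_+,
\end{align*}
which follows from \eqref{bydqa}, \eqref{mur-hupy}, \eqref{del2} and \eqref{wnezero}, where $(\bfrak,\cfrak,\nu)$ is the scalar representing triplet of $\wlam$. Since all the quantities on the right are non-negative, the assumption $\beta_k = 0$ together with $\hat{\lambda}_k > 0$ forces $\cfrak = 0$ and $\int_{\rbb_+} x^k \D\nu = 0$. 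I then split on $k$: if $k = 0$, this gives $\nu(\rbb_+) = 0$, so $\nu = 0$ and the displayed identity yields $\beta_n = 0$ for every $n \in \zbb_+$; if $k \Ge 1$, it forces $\supp{\nu} \subseteq \{0\}$, hence $\nu = c\delta_0$ for some $c \Ge 0$, so that $\int_{\rbb_+} x^n \D \nu = 0$ for every $n \Ge 1$ and consequently $\beta_n = 0$ for all $n \Ge 1$ (while $\beta_0 = c$ need not vanish). In either case $\beta_j = 0$ for every $j \Ge 1$, which is the first half of the ``moreover'' clause.

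To finish the ``moreover'' clause, I read off from \eqref{wnezero} that $\hat{\lambda}_0 = 1$ and $\hat{\lambda}_n = (1 - c) + (\bfrak + c)n$ for $n \Ge 1$ (with $c = 0$ in the case $k = 0$). Setting $a := 1 + \bfrak$ and $b := (1 + 2\bfrak + c)/(1 + \bfrak)$, a direct comparison with the formal moment sequence of $W_{a,b}$ computed from \eqref{wysgi} gives $\wlam = W_{a,b}$. Positivity of $\hat{\lambdab}$ forces $\hat{\lambda}_1 = 1 + \bfrak > 0$ and, from the asymptotic behaviour of $\hat{\lambda}_n$ as $n \to \infty$, $\bfrak + c \Ge 0$; hence $a > 0$ and $b \Ge 1$. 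A direct calculation yields $ab = 1 + 2\bfrak + c$, so $1 - 2a + ab = c \Ge 0$, which is \eqref{osqab}.

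For the equivalence (i)$\Leftrightarrow$(ii), the subspace $\mcal_k$ is $\wlam$-invariant because $\wlam e_j = \lambda_j e_{j+1} \in \mcal_k$ for $j \Ge k$; thus $T := \wlam|_{\mcal_k}$ is well defined with $T^* = P_{\mcal_k} \wlam^*|_{\mcal_k}$, where $P_{\mcal_k}$ is the orthogonal projection onto $\mcal_k$. A direct computation shows $T^* T e_j = \lambda_j^2 e_j$ and $T^{*2} T^2 e_j = \lambda_j^2 \lambda_{j+1}^2 e_j$ for every $j \Ge k$, yielding $\bscr_2(T) e_j = \beta_j e_j$, so (ii) is equivalent to $\beta_j = 0$ for all $j \Ge k$. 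The implication (ii)$\Rightarrow$(i) is then immediate by taking $j = k$, while (i)$\Rightarrow$(ii) follows from the ``moreover'' clause just established. The one delicate point will be the verification of $\bscr_2(T) e_k = \beta_k e_k$ despite the truncation $T^* e_k = 0$ (in contrast with $\wlam^* e_k = \lambda_{k-1} e_{k-1}$ when $k \Ge 1$); this still succeeds because $T^2 e_k = \lambda_k \lambda_{k+1} e_{k+2}$ lies inside $\mcal_k$ and the two applications of $T^*$ needed to return to $e_k$ only traverse $e_{k+2}, e_{k+1}, e_k$, all of which belong to $\mcal_k$.
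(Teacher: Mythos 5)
Your proof is correct, and it takes a more self-contained, computational route than the paper's. The paper handles the equivalence (i)$\Leftrightarrow$(ii) by first treating $k=0$ via the identity $\beta_0=2\cfrak+\nu(\rbb_+)$ together with Corollary~\ref{bcny} and the triplet characterization of $2$-isometries (Lemma~\ref{dqaer}), and then reduces general $k$ to $k=0$ by observing that $\wlam|_{\mcal_k}$ is unitarily equivalent to the shifted weighted shift $W_{\boldsymbol{\lambda_k}}$ with $\beta_0(W_{\boldsymbol{\lambda_k}})=\beta_k(\wlam)$; for the ``moreover'' part it splits on whether the seminorm $\|\cdot\|_2$ is a norm, invoking \eqref{wcdpo}, Theorem~\ref{bacyt} and an external example for the identification with $W_{a,b}$. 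You instead work directly from the single identity $\beta_n\hat\lambda_n=2\cfrak+\int_{\rbb_+}x^n\,\D\nu(x)$: positivity of each summand turns $\beta_k=0$ into $\cfrak=0$ and $\supp{\nu}\subseteq\{0\}$ (with $\nu=0$ when $k=0$), which immediately gives $\beta_j=0$ for $j\Ge1$, and you then identify $\wlam$ with $W_{a,b}$ by explicitly computing $\hat\lambda_n=(1-c)+(\bfrak+c)n$ and matching moment sequences, deriving $a>0$, $b\Ge1$ and $1-2a+ab=c\Ge0$ from positivity of $\hat{\lambdab}$. The equivalence with (ii) then comes from the direct verification $\bscr_2(\wlam|_{\mcal_k})e_j=\beta_je_j$ for $j\Ge k$ (and your remark that the truncation $T^*e_k=0$ is harmless is accurate, since only $T^*e_{k+1}$ and $T^*e_{k+2}$ are ever used). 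What your approach buys is independence from Theorem~\ref{bacyt}, \eqref{wcdpo} and the cited external results; what the paper's buys is brevity by reusing machinery already in place. All steps check out, including the deduction $\bfrak+c\Ge0$ from the positivity of $\hat\lambda_n$ as $n\to\infty$.
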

   \begin{proof}
Let $(B,C,F)$ be the representing triplet of $\wlam$
and $(\bfrak,\cfrak,\nu)$ be the scalar representing
triplet of $\wlam$. Clearly, the space $\mcal_k$ is
invariant for $\wlam$.

(i)$\Leftrightarrow$(ii) Consider first the case of
$k=0$. Then by \eqref{klaud} and \eqref{wnezero} we
have
   \begin{align*}
\beta_0 = 1 - 2\hat\lambda_1 + \hat\lambda_2= 2\cfrak
+ \nu(\rbb_+).
   \end{align*}
Therefore, $\beta_0=0$ if and only if $\cfrak=0$ and
$\nu=0$, or equivalently, by Corollary~\ref{bcny}, if
and if $C=0$ and $F=0$. Hence, according to
Lemma~\ref{dqaer}(ii), $\beta_0=0$ if and only if
$\wlam=\wlam|_{\mcal_0}$ is a $2$-isometry.

The case of arbitrary $k\in \zbb_+$ reduces to $k=0$,
because $\wlam|_{\mcal_k}$ is unitarily equivalent to
the CPD unilateral weighted shift
$W_{\boldsymbol{\lambda_k}}$ with weights
$\boldsymbol{\lambda_k}:=\{\lambda_{n+k}\}_{n=0}^{\infty}$
(relative to the orthonormal basis
$\{e_{n+k}\}_{n=0}^{\infty}$ of $\mcal_k$) and
$\beta_0(W_{\boldsymbol{\lambda_k}}) =
\beta_k(\wlam)$.

It remains to prove the ``moreover'' part. There are
two possibilities. First, the seminorm $\|\cdot\|_2$
is a norm. By \eqref{wcdpo}, we deduce that $\beta_n >
0$ for all $n\in \zbb_+$. Second, the seminorm
$\|\cdot\|_2$ is not a norm. Then by
Theorem~\ref{bacyt}, $\wlam=W_{a,b}$ with $\beta_0=1 -
2 a + a b \Ge 0$, so by
\cite[Example~4.3.6]{Ja-Ju-St22}, $\beta_j=0$ for
every $j\Ge 1$. This completes the proof.
   \end{proof}
   At this point, it is worth making the following
comments related to Theorems~\ref{mainth} and
\ref{bacyt}. Below, if $\wlam$ is CPD, we write
$(B,C,F)$ for the representing triplet of $\wlam$ and
$(\bfrak, \cfrak, \nu)$ for the scalar representing
triplet of $\wlam$.
   \begin{rem} \label{wymem}
(a) Suppose that $\wlam \in \ogr{\ell^2}$ is CPD. If
$\wlam$ is of type~I, then $\bscr_2(\wlam)=0$, so by
\eqref{trnym}, $\|\cdot\|_2=0$ and consequently $\dim
\widehat{\hh}=0$. Moreover, by Lemma~\ref{dqaer}(ii),
$F=0$ and $C=0$. If $\wlam$ is not of type~I and the
seminorm $\|\cdot\|_2$ is not a norm, then by
Corollary~\ref{mybtw}, $\wlam=W_{a,b}$ with $\theta:=1
- 2 a + a b > 0$. By Theorem~\ref{bacyt}, $\wlam$ is
of type~II. Since $\bscr_2(\wlam)$ is the diagonal
operator (relative to the orthonormal basis
$\{e_n\}_{n=0}^{\infty}$) with diagonal elements
$(\theta,0,0,\ldots)$, we see that
   \begin{align*}
\|h\|_2^2 = \is{\bscr_2(\wlam)h}{h} = \theta
|\is{h}{e_0}|^2, \quad h\in \ee.
   \end{align*}
This implies that $\nn=\lin\{e_j\colon j\in
\nbb\}$, and thus $\dim \widehat{\hh}=1$.
Finally, if $\|\cdot\|_2$ is a norm, then
$\dim \widehat{\hh}=\aleph_0$ and, by
\eqref{wcdpo}, $\wlam$ is of type~III.
Therefore, we get the following
   \begin{align} \label{dynqe}
\dim \widehat{\hh} =
   \begin{cases}
0 & \text{\em if $\wlam$ is of type~I,}
   \\
1 & \text{\em if $\wlam$ is of type~II,}
   \\
\aleph_0 & \text{\em if $\wlam$ is of
type~III.}
   \end{cases}
   \end{align}

(b) Let us assume that $\wlam$ is CPD and of type~II.
Clearly, $\wlam$ is not of type~I. In view of
Corollary~\ref{mybtw}, $\wlam=W_{a,b}$ with $\theta:=1
- 2 a + a b > 0$. Straightforward computations based
on \eqref{wysgi} show that
   \begin{align*}
\hat\lambda_n = a + (n-1)a(b-1), \quad n\Ge 1.
   \end{align*}
In turn, by \eqref{rnx-1} and \eqref{wnezero}, we have
   \begin{align*}
\hat\lambda_n = (1+\bfrak)+ (n-1)(\bfrak+\nu(\{0\})),
\quad n\Ge 1.
   \end{align*}
This shows that $\bfrak = a-1$, $\cfrak =0$ and
$\nu(\{0\})=\theta$. As a consequence, $\nu=\theta \,
\delta_0$. Since $\nu$ has an atom at $0$, we deduce
from \cite[Theorem~7.2]{Ja-Ju-Le-St23} that $\wlam$
has no $1$-step backward extension.

(c) Assume that $\wlam \in \ogr{\ell^2}$ is
such that $\wlam|_{\mcal}$ is a
$2$-isometry, where
$\mcal=\bigvee_{j=1}^{\infty} \{e_j\}$. Set
$a=\lambda_0^2$ and $b=\lambda_1^2$.
Clearly, $\wlam|_{\mcal}$ is unitarily
equivalent to the unilateral weighted shift
$W_{\omegab}$ with weights $\omegab =
\{\omega_n\}_{n=0}^{\infty}$, where
$\omega_n := \lambda_{n+1}$ for $n\in
\zbb_+$. Then, by Lemma~\ref{dqaer}(ii), the
operator $W_{\omegab}$ is a CPD unilateral
weighted shift for which $\tilde{\cfrak}=0$
and $\tilde{\nu}=0$, where $(\tilde{\bfrak},
\tilde{\cfrak}, \tilde{\nu})$ stands for the
scalar representing triplet of
$W_{\omegab}$. Applying
\cite[Lemma~6.1(ii)]{Ja-St01} to
$W_{\omegab}$, we deduce that
   \begin{align*}
\widehat{\omega}_n
\overset{\eqref{mur-hupy}}= 1 + n
(\lambda_1^2-1), \quad n\in \zbb_+.
   \end{align*}
This, together with Theorem~\ref{cpdws}
applied to $W_{\omegab}$, implies that
$\tilde{\bfrak}=b-1$. Note that the
unilateral weighted shift $\wlam$ is the
$1$-step backward extension of
$\wlam|_{\mcal}$. It follows from
\cite[Theorem~7.2]{Ja-Ju-Le-St23} that
$\wlam$ is CPD if and only if
   \begin{align*}
\frac{1}{a} \Ge \int_{\rbb_+} \frac{1}{x} \D
\tilde\nu(x) + 1 + \tilde{\cfrak} - \tilde{\bfrak} = 2
- b,
   \end{align*}
or equivalently that $\theta:=1 - 2 a + a b \Ge 0$.
   \hfill $\diamondsuit$
   \end{rem}
   \section{Examples}
   \subsection{\label{Sec.3.1}Examples via the measure
$\nu$} In this subsection we give sufficient
conditions, written only in terms of the measure
$\nu$, for a CPD unilateral weighted shift $\wlam$ to
be similar to a subnormal operator. We begin with a
preparatory lemma.
   \begin{lem}  \label{nyttrs}
Let $\wlam\in \ogr{\ell^2}$ be a CPD
unilateral weighted shift with the scalar
representing triplet $(\bfrak,\cfrak,\nu)$
satisfying the following conditions{\em :}
   \begin{enumerate}
   \item[(i)] $\vartheta:=\sup\supp{\nu} > 1$,
   \item[(ii)] $\int_{(1,\vartheta]}\frac{1}{(x-1)^2}\D \nu(x) <
\infty$,
   \item[(iii)] there exists a sequence
$\{\varepsilon_n\}_{n=1}^{\infty} \subseteq
(0,\infty)$ such that
   \begin{align*}
\liminf_{n\to\infty} (\vartheta-\varepsilon_n)
> 1 \; \text{and} \; \liminf_{n\to\infty}\nu([\vartheta-\varepsilon_n,\vartheta])\Big(1-
\frac{\varepsilon_n}{\vartheta}\Big)^n > 0.
   \end{align*}
   \end{enumerate}
Then $\wlam$ is of type III and is similar
to a subnormal unilateral weighted shift.
Moreover, if at least one of the following
conditions holds\footnote{Using (i), (ii)
and the Cauchy-Schwarz inequality, we see
that $\int_{(1,\vartheta]} \frac{1}{x-1} \D
\nu(x) < \infty,$ so $-\infty \Le
\int_{\rbb_+} \frac{1}{x-1} \D \nu(x) <
\infty$. Therefore, the ``moreover'' part
holds assuming only (i) and (ii).}{\em :}
   \begin{enumerate}
   \item[(a)]
$1 - \int_{(1,\vartheta]}\frac{1}{(x-1)^2}\D \nu(x) <
\int_{[0,1)} \frac{1}{(x-1)^2} \D \nu(x) \Le \infty$,
   \item[(b)] $\bfrak \neq \int_{\rbb_+}
\frac{1}{x-1} \D \nu(x)$,
   \item[(c)] $\cfrak > 0$,
   \end{enumerate}
then $\wlam$ is not subnormal.
   \end{lem}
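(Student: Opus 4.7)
The plan is to verify the hypothesis of Theorem~\ref{mainth}(v)---namely, to produce $\varepsilon > 0$ with $\beta_n \Ge \varepsilon$ for every $n\in \zbb_+$---which then delivers at once that $\wlam$ is of type~III and similar to a subnormal unilateral weighted shift. The moreover part will be read off from Proposition~\ref{rewa}.

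The starting point is the identity extracted from the proof of Theorem~\ref{mainth}: combining \eqref{yzldwa} with $\wlam^n e_0 = \sqrt{\hat\lambda_n}\, e_n$ and \eqref{zewq} yields
\begin{align*}
\hat{\lambda}_n\, \beta_n = \int_{\rbb_+} x^n \D \mu_0(x), \quad n \in \zbb_+,
\end{align*}
where $\mu_0 = \nu + 2\cfrak \delta_1$. By (iii), $\vartheta - \varepsilon_n > 1$ for $n$ large, so restricting the integral to $[\vartheta-\varepsilon_n,\vartheta]$ gives the lower bound
\begin{align*}
\int_{\rbb_+} x^n \D \mu_0(x) \Ge (\vartheta-\varepsilon_n)^n\, \nu([\vartheta-\varepsilon_n,\vartheta]).
\end{align*}

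For an upper bound on $\hat\lambda_n$, I would split the integral in \eqref{wnezero} at $x=1$. On $[0,1]$, the pointwise bound $Q_n(x) \Le \frac{n(n-1)}{2}$ (visible from \eqref{klaud}) contributes only a polynomial in $n$. On $(1,\vartheta]$, the closed form \eqref{rnx-1} gives $Q_n(x) \Le x^n/(x-1)^2 \Le \vartheta^n/(x-1)^2$, and $(x-1)^{-2}$ is $\nu$-integrable on this set by~(ii). Since $\vartheta > 1$, the constant and polynomial-in-$n$ pieces in \eqref{wnezero} are negligible compared to $\vartheta^n$, giving $\hat\lambda_n \Le C \vartheta^n$ for large $n$. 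Combining the two estimates yields
\begin{align*}
\beta_n \Ge \frac{1}{C}\, \nu([\vartheta-\varepsilon_n,\vartheta])\Big(1-\frac{\varepsilon_n}{\vartheta}\Big)^n,
\end{align*}
whose $\liminf$ is positive by (iii). To pass from a positive $\liminf$ to $\inf_n \beta_n > 0$, I invoke (i): since every $W_{a,b}$ has scalar representing triplet with $\supp{\nu}\subseteq\{0\}$, hypothesis (i) rules out $\wlam = W_{a,b}$, so by Theorem~\ref{bacyt} the shift $\wlam$ is of type~III, and by \eqref{wcdpo} all $\beta_n$ are strictly positive. Hence $\inf_n \beta_n > 0$, and Theorem~\ref{mainth}(v) completes the first assertion.

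The moreover part is immediate from Proposition~\ref{rewa}: each of (a), (b), (c) directly negates one of the necessary subnormality conditions (ii-a), (ii-b), (ii-c) there. The main technical obstacle is the denominator estimate: one must confirm that the behaviour of $\nu$ near $x=1$ is precisely tamed by (ii) (otherwise $\int_{(1,\vartheta]}(x-1)^{-2}\D\nu$ could be infinite and the bound $\hat\lambda_n = O(\vartheta^n)$ would fail), and that (iii) is sharp enough so that the product $\nu([\vartheta-\varepsilon_n,\vartheta])(1-\varepsilon_n/\vartheta)^n$ does not collapse in the limit.
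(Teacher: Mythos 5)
Your proposal is correct and follows essentially the same route as the paper: the identity $\hat\lambda_n\beta_n=2\cfrak+\int_{\rbb_+}x^n\,\D\nu(x)$, the lower bound $\nu([\vartheta-\varepsilon_n,\vartheta])(\vartheta-\varepsilon_n)^n$ for the numerator, the split of $\int_{\rbb_+}Q_n\,\D\nu$ at $x=1$ with $Q_n(x)\Le\vartheta^n/(x-1)^2$ on $(1,\vartheta]$ controlled by (ii), the passage from $\liminf_n\beta_n>0$ to $\inf_n\beta_n>0$ via type III and \eqref{wcdpo}, and the appeal to Theorem~\ref{mainth}(v) and Proposition~\ref{rewa}. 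The only cosmetic difference is that you bound $\hat\lambda_n\Le C\vartheta^n$ directly, whereas the paper normalizes by $\int_{\rbb_+}Q_n\,\D\nu$ and bounds the resulting ratios; the estimates involved are the same.
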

   \begin{proof}
Note that
   \begin{align} \notag
\beta_n & \overset{\eqref{bydqa}}= 1-2\lambda_n^2 +
\lambda_n^2 \lambda_{n+1}^2
   \\ \notag
& \hspace{-.5ex}\overset{\eqref{mur-hupy}} =
\frac{\hat\lambda_n-2 \hat\lambda_{n+1} +
\hat\lambda_{n+2}}{\hat\lambda_n}
   \\  \notag
& \hspace{-.5ex} \overset{\eqref{wnezero}}=
\frac{2\cfrak + \int_{\rbb_+}(Q_n - 2 Q_{n+1} +
Q_{n+2})\D \nu}{\hat\lambda_n}
   \\ \label{estwq}
& \overset{\eqref{del2}}= \frac{2\cfrak +
\int_{\rbb_+} x^n \D \nu(x)}{1 + \bfrak n + \cfrak n^2
+ \int_{\rbb_+} Q_n \D\nu}, \quad n \in \zbb_+.
   \end{align}
Our next goal is to estimate the expression
$\int_{\rbb_+} Q_n \D\nu$ from below and above. We
begin with the estimation from above. By \eqref{klaud}
and \eqref{rnx-1}, we have
   \begin{align*}
Q_n(x) \Le
   \begin{cases}
\frac{(n-1)n}{2} & \text{ if $x\in [0,1]$},
   \\[1ex]
\frac{\vartheta^n}{(x-1)^2} & \text{ if $x\in
(1,\vartheta]$,}
   \end{cases}
\quad n\in \zbb_+,
   \end{align*}
which yields (recall that $\nu(\{1\})=0$)
   \begin{align} \label{qwas}
\int_{\rbb_+} Q_n \D \nu \Le \nu([0,1))
\frac{(n-1)n}{2} +
\int_{(1,\vartheta]}\frac{1}{(x-1)^2}\D \nu(x)\,
\vartheta^n , \quad n\in \zbb_+.
   \end{align}
Now, we proceed with the estimation from below:
   \begin{align} \label{iqnf}
\int_{\rbb_+} Q_n\D \nu \Ge \int_{(1, \vartheta]}
Q_n\D \nu \overset{\eqref{klaud}}\Ge \nu((1,
\vartheta]) \frac{(n-1)n}{2}, \quad n\in \zbb_+.
   \end{align}
Since, by (i), $\vartheta>1$ and $\vartheta \in
\supp{\nu}$, we deduce that $\nu((1, \vartheta])>0$
and consequently
$\int_{(1,\vartheta]}\frac{1}{(x-1)^2}\D \nu(x)
>0$. In particular, by  \eqref{iqnf}, $\int_{\rbb_+} Q_n\D \nu > 0$ for all
$n\Ge 2$ and
   \begin{multline*}
\frac{1 + \bfrak n + \cfrak n^2}{\int Q_n \D\nu} \Le
\frac{2(\frac{1}{n^2} + \frac{\bfrak}{n} +
\cfrak)}{\nu((1, \vartheta])(1-\frac{1}{n})} \text{
for all $n\Ge 2$ such that $1 + \bfrak n + \cfrak n^2
\Ge 0$.}
   \end{multline*}
This implies that
   \begin{align*}
\frac{1 + \bfrak n + \cfrak n^2}{\int Q_n \D\nu} & \Le
\frac{2 \cfrak}{\nu((1, \vartheta])} + 1 \text{ for
$n$ large enough,}
   \end{align*}
so
   \begin{align} \label{gfwqa}
0 < \frac{1 + \bfrak n + \cfrak n^2}{\int Q_n \D\nu} +
1 \Le \frac{2 \cfrak}{\nu((1, \vartheta])} + 2 \text{
for $n$ large enough.}
   \end{align}
By (iii), $\vartheta - \varepsilon_n
> 1$ for $n$ large enough, so
   \begin{align*}
\int_{\rbb_+} x^n \D\nu(x) \Ge \int_{[\vartheta -
\varepsilon_n,\vartheta]} x^n \D\nu(x) \Ge
\nu([\vartheta - \varepsilon_n,\vartheta])(\vartheta -
\varepsilon_n)^n \; \text{for $n$ large enough}.
   \end{align*}
This, together with (ii) and \eqref{qwas}, implies
that
   \begin{align*}
\frac{\int x^n \D \nu(x)}{\int Q_n \D\nu} & \Ge
\frac{\nu([\vartheta - \varepsilon_n,\vartheta])
\big(1 - \frac{\varepsilon_n}{\vartheta}\big)^n
}{\nu([0,1)) \frac{(n-1)n}{2\vartheta^n}+
\int_{(1,\vartheta]}\frac{1}{(x-1)^2}\D \nu(x)} \;
\text{for $n$ large enough}.
   \end{align*}
Therefore, by (i)-(iii), we have
   \begin{align*}
\liminf_{n\to \infty} \frac{\int x^n \D \nu(x)}{\int
Q_n \D\nu} > 0.
   \end{align*}
It follows that
   \begin{align}  \label{lynif}
\liminf_{n\to \infty} \beta_n &
\overset{\eqref{estwq}}= \liminf_{n\to \infty}
\frac{\frac{2\cfrak}{\int Q_n \D\nu} + \frac{\int x^n
\D \nu(x)}{\int Q_n
\D\nu}}{\underset{>0}{\underbrace{\frac{1 + \bfrak n +
\cfrak n^2}{\int Q_n \D\nu} + 1}}}
\overset{\eqref{gfwqa}}\Ge \liminf_{n\to \infty}
\frac{\frac{\int x^n \D \nu(x)}{\int Q_n
\D\nu}}{\frac{2 \cfrak}{\nu((1, \vartheta])} + 2} > 0.
   \end{align}
In view of (i), $\wlam$ is of type III.
Thus, by \eqref{wcdpo}, $\beta_n > 0$ for
every $n\in \zbb_+$. This, together with
\eqref{lynif}, shows that $\inf_{n\in
\zbb_+} \beta_n > 0$. Using
Theorem~\ref{mainth}(v), we conclude that
$\wlam$ is similar to a subnormal unilateral
weighted shift. Finally, if at least one of
conditions (a)-(c) holds, then, by
Proposition~\ref{rewa}, $\wlam$ is not
subnormal (for this, we only need (i) and
(ii)). This completes the proof.
   \end{proof}
In the case of the presence of an atom at the right
endpoint of the support of the measure $\nu$, the
question of similarity can be settled as follows.
   \begin{thm}\label{kdwq}
Let $\wlam\in \ogr{\ell^2}$ be a CPD
unilateral weighted shift with the scalar
representing triplet $(\bfrak,\cfrak,\nu)$
satisfying the following conditions{\em :}
   \begin{enumerate}
   \item[(i)] $\vartheta :=\sup\supp{\nu} > 1$,
   \item[(ii)] $\int_{(1,\vartheta]}\frac{1}{(x-1)^2}\D \nu(x) <
\infty$,
   \item[(iii)] $\nu(\{\vartheta\}) > 0$.
   \end{enumerate}
Then $\wlam$ is of type III and is similar
to a subnormal unilateral weighted shift.
Moreover, if at least one of conditions {\em
(a)}-{\em (c)} of Lemma~{\em \ref{nyttrs}}
is valid, then $\wlam$ is not subnormal.
   \end{thm}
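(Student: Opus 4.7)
The strategy is to deduce Theorem~\ref{kdwq} directly from Lemma~\ref{nyttrs}. Hypotheses (i) and (ii) of Theorem~\ref{kdwq} coincide verbatim with conditions (i) and (ii) of Lemma~\ref{nyttrs}, so the only real task is to exploit the atom condition (iii) of Theorem~\ref{kdwq} to produce a sequence $\{\varepsilon_n\}_{n=1}^{\infty} \subseteq (0,\infty)$ witnessing condition (iii) of Lemma~\ref{nyttrs}.

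The natural candidate is $\varepsilon_n := 1/n$ (any positive sequence tending to $0$ slowly enough will do, but this is the cleanest). Since $\vartheta > 1$, we have $\vartheta - \varepsilon_n \to \vartheta > 1$, so in particular $\liminf_{n\to\infty}(\vartheta - \varepsilon_n) > 1$, verifying the first half of Lemma~\ref{nyttrs}(iii). For the second half, the atom $\nu(\{\vartheta\}) > 0$ yields the monotone lower bound
\begin{align*}
\nu([\vartheta - \varepsilon_n, \vartheta]) \Ge \nu(\{\vartheta\}) > 0, \quad n\in \nbb,
\end{align*}
while the elementary calculus estimate
\begin{align*}
\Big(1 - \frac{\varepsilon_n}{\vartheta}\Big)^n = \Big(1 - \frac{1}{n\vartheta}\Big)^n \xrightarrow[n\to\infty]{} \E^{-1/\vartheta} > 0
\end{align*}
gives
\begin{align*}
\liminf_{n\to\infty} \nu([\vartheta - \varepsilon_n, \vartheta])\Big(1 - \frac{\varepsilon_n}{\vartheta}\Big)^n \Ge \nu(\{\vartheta\})\, \E^{-1/\vartheta} > 0,
\end{align*}
which is precisely the remaining requirement.

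With all three hypotheses of Lemma~\ref{nyttrs} verified, the lemma immediately gives that $\wlam$ is of type~III and similar to a subnormal unilateral weighted shift. The ``moreover'' part is inherited verbatim from the corresponding ``moreover'' clause of Lemma~\ref{nyttrs}, since (as noted in the footnote there) conditions (a)--(c) in the non-subnormality conclusion only require hypotheses (i) and (ii), both of which are assumed here. There is no genuine obstacle in this proof; the entire content is the observation that a point mass at the top of $\supp{\nu}$ forces $\int x^n\, \D\nu(x)$ to grow like $\vartheta^n$, which is exactly the asymptotic lower bound needed to activate the machinery of Lemma~\ref{nyttrs}.
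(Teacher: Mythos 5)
Your proposal is correct and follows essentially the same route as the paper: both choose $\varepsilon_n=1/n$ and verify condition (iii) of Lemma~\ref{nyttrs} via the limit $(1-\tfrac{1}{n\vartheta})^n\to\E^{-1/\vartheta}$, the only cosmetic difference being that you bound $\nu([\vartheta-\varepsilon_n,\vartheta])$ from below by $\nu(\{\vartheta\})$ using monotonicity while the paper computes the exact limit by continuity from above. Both verifications are valid and the handling of the ``moreover'' part is identical.
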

   \begin{proof}
In view of Lemma~\ref{nyttrs}, it suffices to show
that condition (iii) of this lemma is valid. Set
$\varepsilon_n = \frac{1}{n}$ for $n\Ge 1$. By
continuity from above of finite measures, we have
   \begin{align*}
\lim_{n\to\infty}
\nu([\vartheta-\varepsilon_n,\vartheta])=\nu(\{\vartheta\}).
   \end{align*}
Using this equality and Euler's formula
   \begin{align*}
\lim_{n\to \infty} \Big(1+\frac{x}{n}\Big)^n= \E^{x},
\quad x \in \rbb,
   \end{align*}
we verify that
   \begin{align*}
\lim_{n\to\infty}\nu([\vartheta-\varepsilon_n,\vartheta])\Big(1-
\frac{\varepsilon_n}{\vartheta}\Big)^n =
\nu(\{\vartheta\}) \E^{-1/\vartheta}.
   \end{align*}
Hence, by (iii), condition (iii) of Lemma~\ref{nyttrs}
is satisfied.
   \end{proof}
Regarding Theorem~\ref{kdwq}, observe that if
$\vartheta > 1$ and $\nu((1,\eta))=0$ for some $1 <
\eta \Le \vartheta$, then condition (ii) of this
theorem is satisfied automatically. In particular,
this is the case if any of the conditions (i)-(iii) of
Theorem~\ref{ineqsuf} is valid. Consequently,
condition (iii) of Theorem~\ref{kdwq} (or
Lemma~\ref{nyttrs}) is not necessary for $\wlam$ to be
similar to a subnormal operator (see
Example~\ref{3uwre}).
   \subsection{\label{Sec.3.2}Examples via the scalar
representing triplet $(\bfrak,\cfrak, \nu)$}
   We begin with a lemma offering sufficient
conditions for similarity of a CPD unilateral weighted
shift to a subnormal operator written in terms of its
weights.
   \begin{lem}  \label{dfawyr}
Let $\wlam\in \ogr{\ell^2}$ be a CPD
unilateral weighted shift. Suppose that at
least one of the following conditions
holds{\em :}
   \begin{enumerate}
   \item[(i)] there exist $\tau\in (0,1)$ and $M\in
(0,\infty)$ such that $(1-\tau) (1+M)<1$ and $1 + \tau
\Le \lambda_n^2 \Le 1 + M$ for $n$ large enough,
   \item[(ii)]there exists $\tau\in [1,\infty)$
such that $1 + \tau \Le \lambda_n^2$ for $n$ large
enough.
   \end{enumerate}
Then $\wlam$ is of type III and is similar to a
subnormal unilateral weighted shift.
   \end{lem}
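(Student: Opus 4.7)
The plan is to apply Theorem~\ref{mainth}(v): it suffices to produce $\varepsilon > 0$ with $\beta_n = 1 - 2\lambda_n^2 + \lambda_n^2 \lambda_{n+1}^2 \Ge \varepsilon$ for every $n \in \zbb_+$. The natural reparametrization is $t_n := \lambda_n^2 - 1$, which yields the identity
\begin{align*}
\beta_n = t_n(t_{n+1} - 1) + t_{n+1}, \quad n \in \zbb_+,
\end{align*}
verified by direct expansion. Under (i) we have $\tau \Le t_n \Le M$ for all $n \Ge N$, while under (ii) we have $t_n \Ge \tau \Ge 1$ for all $n \Ge N$.

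Case (ii) is immediate: since $t_{n+1} \Ge \tau \Ge 1$, the term $t_n(t_{n+1}-1)$ is nonnegative, so $\beta_n \Ge t_{n+1} \Ge \tau > 0$ for $n \Ge N$. In case (i), $t_{n+1}-1$ may be negative, but regardless of its sign, $0 \Le t_n \Le M$ forces $t_n(t_{n+1}-1) \Ge M(t_{n+1}-1)$ when $t_{n+1} \Le 1$ and the estimate is trivial when $t_{n+1} \Ge 1$; combining the two and using $t_{n+1} \Ge \tau$, one gets
\begin{align*}
\beta_n \Ge (M+1)t_{n+1} - M \Ge \tau(1+M) - M, \quad n \Ge N.
\end{align*}
The hypothesis $(1-\tau)(1+M) < 1$ rewrites as $\tau(1+M) > M$, so $\varepsilon_0 := \tau(1+M) - M > 0$ and $\beta_n \Ge \varepsilon_0$ for $n \Ge N$.

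Having a uniform positive lower bound for large $n$ is not quite the hypothesis of Theorem~\ref{mainth}(v), so I would next upgrade it to all $n$. By Proposition~\ref{kkra}, vanishing of any $\beta_k$ would force $\beta_j = 0$ for all $j \Ge 1$, which is incompatible with $\beta_n \Ge \varepsilon_0$ for $n \Ge N$. Consequently every $\beta_n$ is strictly positive, and setting
\begin{align*}
\varepsilon := \min\{\beta_0, \beta_1, \ldots, \beta_{N-1}, \varepsilon_0\} > 0
\end{align*}
gives $\beta_n \Ge \varepsilon$ for all $n \in \zbb_+$. Theorem~\ref{mainth}(v) then yields simultaneously that $\wlam$ is of type~III and that it is similar to a subnormal unilateral weighted shift.

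The only subtle point is the sign analysis in case (i), where the worst case $t_{n+1} < 1 < t_n$ can push $t_n(t_{n+1}-1)$ as low as $M(\tau-1)$; the hypothesis $(1-\tau)(1+M) < 1$ is tailored precisely to ensure this loss is still dominated by the positive contribution $t_{n+1} \Ge \tau$. Case (ii) needs no such balancing because $\tau \Ge 1$ kills the negativity altogether.
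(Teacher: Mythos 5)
Your proof is correct and follows essentially the same route as the paper: a uniform positive lower bound on $\beta_n$ for large $n$ (your $\tau(1+M)-M$ is literally the paper's $1-(1-\tau)(1+M)$, obtained there without the substitution $t_n=\lambda_n^2-1$ or any case split), then Proposition~\ref{kkra} to exclude vanishing terms and upgrade to $\inf_{n}\beta_n>0$, then Theorem~\ref{mainth}(v). The only blemishes are cosmetic: in case (i) the displayed chain $\beta_n\Ge (M+1)t_{n+1}-M$ is only valid when $t_{n+1}\Le 1$ (though your conclusion still holds since $\beta_n\Ge t_{n+1}\Ge 1>\tau(1+M)-M$ otherwise), and you should note explicitly that $\beta_n>0$ forces $\bscr_2(\wlam)\neq 0$, so $\wlam$ is not of type~I and Theorem~\ref{mainth} is applicable.
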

   \begin{proof}
(i) Note that
   \begin{align*}
\beta_n = 1 -2 \lambda_n^2 + \lambda_n^2
\lambda_{n+1}^2 & \Ge 1 -2 \lambda_n^2 +
(1+\tau)\lambda_n^2 = 1 - (1-\tau) \lambda_n^2
   \\
& \Ge 1 - (1-\tau) (1+M) > 0 \quad \text{ for $n$
large enough}.
   \end{align*}
Using Proposition~\ref{kkra} and the fact that
$\inf_{n\in \zbb_+} \beta_n \Ge 0$ (see \eqref{zewq}),
we conclude that $\inf_{n\in \zbb_+} \beta_n > 0$.
Hence, $\wlam$ is not of type I. According to
Theorem~\ref{mainth}(v), $\wlam$ is of type~III and is
similar to a subnormal unilateral weighted shift.

(ii) Likewise, we get
   \begin{align*}
\beta_n \Ge 1 +(\tau-1) \lambda_n^2 \Ge 1 + (\tau-1)
(\tau+1)=\tau^2 >0 \quad \text{ for $n$ large enough}.
   \end{align*}
Arguing as above, we come to the same conclusion.
   \end{proof}
In contrary to Theorem~\ref{kdwq}, the
similarity criterion given below appeals
largely to the parameters $\bfrak$ and
$\cfrak$ of the scalar representing triplet
$(\bfrak, \cfrak, \nu)$, with $\nu$
satisfying the mild requirements for the
endpoints of $\supp{\nu}$ and the total mass
of~$\nu$. To avoid over-complexity of the
problem studied in this paper, we will focus
on the case when $\bfrak\Ge 0$.
   \begin{thm} \label{ineqsuf}
Let $\wlam\in \ogr{\ell^2}$ be a CPD
unilateral weighted shift and
$(\bfrak,\cfrak, \nu)$ be its scalar
representing triplet with $\bfrak \Ge 0$
satisfying at least one of the following
conditions{\em :}
   \begin{enumerate}
   \item[(i)] $\bfrak + \cfrak \Ge 1$,
$2\cfrak + \nu(\rbb_+)(1 - \cfrak) \Ge \bfrak$,
$\nu(\rbb_+)\Ge 1$ and $\inf{\supp{\nu}} \Ge
2(1+\cfrak)$,
   \item[(ii)]  there exists $t \in (0,
\infty)$ such that $\bfrak + \cfrak + \nu(\rbb_+)t (1
+ t) \Ge 1$, \break $2\cfrak + \nu(\rbb_+)(1 - 2t
-\frac{3}{2}t^2) \Ge \bfrak$, $\nu(\rbb_+)t(2+t) \Ge 2
\cfrak$ and $\inf{\supp{\nu}} \Ge 2+t$,
   \item[(iii)] there exist  $t
\in (0, \infty)$ and $\tau \in (0, 1)$ such that
   \begin{itemize}
   \item[(iii-a)] $\bfrak + \cfrak
\Ge \tau$, $2\cfrak + \nu(\rbb_+)(1 - \frac{t}{2}) \Ge
\tau \bfrak$ and $\nu(\rbb_+) t \Ge 2 \tau \cfrak$,
   \item[(iii-b)] $\bfrak + \cfrak \Le \vartheta -1$,
$2\cfrak + \nu(\rbb_+) \Le (\vartheta -1) \bfrak$,
$(1-\tau)\vartheta < 1$ and $\inf{\supp{\nu}} \Ge
1+\tau + t$, where $\vartheta:=\sup\supp \nu$.
   \end{itemize}
   \end{enumerate}
Then $\wlam$ is of type III and is similar
to a subnormal unilateral weighted shift.
Moreover, if at least one of conditions {\em
(a)}-{\em (c)} of Lemma~{\em \ref{nyttrs}}
is valid, then $\wlam$ is not subnormal.
   \end{thm}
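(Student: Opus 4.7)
The plan is to reduce the theorem to Lemma~\ref{dfawyr}, whose two sufficient conditions amount to establishing either a lower bound $\lambda_n^2\Ge 1+\tau$ for some $\tau\Ge 1$ and $n$ large, or a two-sided bound $1+\tau\Le\lambda_n^2\Le 1+M$ with $(1-\tau)(1+M)<1$. Under each of (i)--(iii) I will verify the appropriate bound; the type-III assertion then comes almost for free.

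For the type-III assertion, note that each hypothesis forces $\nu\neq 0$ (trivially under (i) since $\nu(\rbb_+)\Ge 1$; under (ii) or (iii) because $\nu=0$ together with $\bfrak\Ge 0$ would make the remaining inequalities mutually inconsistent), and each forces $\inf\supp{\nu}>1$. Hence $\supp{\nu}\nsubseteq\{0\}$ and $\wlam$ is of type~III by Definition~\ref{kgsw}.

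The central algebraic identity is
\[
\hat\lambda_{n+1}-\alpha\hat\lambda_n=(1-\alpha)(1+\bfrak n+\cfrak n^2)+\bfrak+\cfrak(2n+1)+n\nu(\rbb_+)+\int_{\rbb_+}(x-\alpha)Q_n(x)\D\nu(x),
\]
obtained by expanding both sides using \eqref{wnezero} together with the recursion $Q_{n+1}=xQ_n+n$ from \eqref{rnx-0}. Since $\lambda_n^2\Ge\alpha$ is equivalent to nonnegativity of this expression, each of (i)--(iii) reduces to showing the right-hand side has the correct sign for a specific $\alpha$. For (i) I will take $\alpha=2$: the hypothesis $\inf\supp{\nu}\Ge 2(1+\cfrak)$ gives $x-2\Ge 2\cfrak$ on $\supp{\nu}$, and coupled with the elementary estimate $Q_n(x)\Ge \tfrac{n(n-1)}{2}$ for $x\Ge 1$, $n\Ge 2$ from \eqref{klaud}, this contributes at least $\cfrak n(n-1)\nu(\rbb_+)\Ge\cfrak n(n-1)$ to the integral, cancelling the $-\cfrak n^2$ coming from $(1-\alpha)\cfrak n^2$ up to a linear residue; the three scalar inequalities in (i) then combine (using $\nu(\rbb_+)\Ge 1$ to bootstrap $2\cfrak+\nu(\rbb_+)(1-\cfrak)\Ge\bfrak$ into $\cfrak+\nu(\rbb_+)\Ge\bfrak$) to cover the constant and linear-in-$n$ terms for every $n$, yielding $\lambda_n^2\Ge 2$ and so Lemma~\ref{dfawyr}(ii) with $\tau=1$ applies. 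For (ii) the same scheme with $\alpha=2+t$ goes through, the three inequalities of (ii) being engineered to control the constant, $O(n)$, and $O(n^2)$ coefficients of the identity after the analogous lower estimate of $\int(x-2-t)Q_n\D\nu$. For (iii) a two-sided estimate is needed: the lower bound $\lambda_n^2\Ge 1+\tau$ uses $\alpha=1+\tau$ together with (iii-a) and $\inf\supp{\nu}\Ge 1+\tau+t$, while the upper bound $\lambda_n^2\Le\vartheta$ uses $\alpha=\vartheta$ and the fact that $x-\vartheta\Le 0$ on $\supp{\nu}$, with (iii-b) controlling the sign of the polynomial-in-$n$ part; setting $M:=\vartheta-1$, the hypothesis $(1-\tau)\vartheta<1$ matches the requirement $(1-\tau)(1+M)<1$ of Lemma~\ref{dfawyr}(i).

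The main technical obstacle is the bookkeeping in (ii) and (iii): each inequality in the hypothesis corresponds to the sign of one polynomial coefficient in $n$, and matching them requires decomposing the integral $\int(x-\alpha)Q_n\D\nu$ into its effective constant, linear-in-$n$, and quadratic-in-$n$ contributions. The small-$n$ cases ($n=0,1$) must be treated separately because the bound $Q_n\Ge\tfrac{n(n-1)}{2}$ degenerates there, but these reduce to direct computations from $\hat\lambda_0=1$, $\hat\lambda_1=1+\bfrak+\cfrak$, and $\hat\lambda_2=1+2\bfrak+4\cfrak+\nu(\rbb_+)$. The ``moreover'' assertion is then immediate from Proposition~\ref{rewa}: each of (a)--(c) in Lemma~\ref{nyttrs} directly negates one of (ii-a)--(ii-c) there, so $\wlam$ cannot be subnormal.
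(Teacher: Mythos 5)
Your overall strategy --- rewrite $\hat\lambda_{n+1}-\alpha\hat\lambda_n$ via the recursion $Q_{n+1}(x)=xQ_n(x)+n$, match the resulting coefficients in $n$ against the hypotheses, and feed the resulting bounds on $\lambda_n^2$ into Lemma~\ref{dfawyr} --- is exactly the paper's, and your treatments of cases (i) and (iii), of the type-III claim, and of the ``moreover'' part via Proposition~\ref{rewa} are sound.

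The gap is in case (ii). Taking $\alpha=2+t$ in your central identity means proving $\lambda_n^2\Ge 2+t$, and that is false in general under hypothesis (ii). With $\alpha=2+t$ the polynomial part of your identity is $-(1+t)(1+\bfrak n+\cfrak n^2)+\bfrak+\cfrak(2n+1)+n\nu(\rbb_+)$, whose $n^2$-coefficient is $-(1+t)\cfrak<0$ whenever $\cfrak>0$, while the ``analogous lower estimate'' of $\int_{\rbb_+}(x-2-t)Q_n(x)\,\D\nu(x)$ is just $\Ge 0$ (the integrand vanishes identically when $\nu$ is concentrated at $2+t$), so no positive quadratic term is available to compensate. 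Hypothesis (ii) does admit $\cfrak>0$: e.g.\ $t=\tfrac1{10}$, $\cfrak=\tfrac1{100}$, $\bfrak=1$, $\nu=2\delta_{2.1}$ satisfies all four inequalities, and there a direct computation gives $\hat\lambda_{n+1}-(2+t)\hat\lambda_n=-0.09+2.92n-0.011n^2<0$ for large $n$, so $\lambda_n^2<2+t$ eventually. The paper instead targets $\alpha=2$ (which is all Lemma~\ref{dfawyr}(ii) needs, with $\tau=1$) and uses $x\Ge 2+t$ only to peel off $t\int Q_n\,\D\nu$ as surplus; moreover, even with the target $\alpha=2$ the crude bound $Q_n(x)\Ge\frac{n(n-1)}{2}$ is not enough, since it yields the quadratic coefficient $\frac{t\nu(\rbb_+)}{2}$ and would require $t\nu(\rbb_+)\Ge 2\cfrak$, which is strictly stronger than the stated hypothesis $\nu(\rbb_+)t(2+t)\Ge 2\cfrak$. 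The missing ingredient is the refined estimate $Q_n(x)\Ge(n-1)+(2+t)\frac{(n-1)(n-2)}{2}$ for $x\Ge 2+t$ and $n\Ge 3$ (the paper's \eqref{nyrq}), which produces the coefficient $\frac{\nu(\rbb_+)t(2+t)}{2}$ that hypothesis (ii) is calibrated to. Your bookkeeping for (ii) cannot close without both corrections.
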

   \begin{proof}
(i) By \eqref{klaud}, we have
   \begin{align} \label{qkx}
Q_n(x) \Ge \frac{n(n-1)}{2}, \quad x\in [1,\infty), \,
n\in \zbb_+.
   \end{align}
From \eqref{rnx-0}, \eqref{wnezero} and the assumption
$\inf\sup \nu \Ge 2(1+\cfrak)$ it follows that
   \allowdisplaybreaks
   \begin{align*}
\hat\lambda_{n+1} & = (1 + \bfrak n + \cfrak n^2) +
(\bfrak + \cfrak) + \big(2\cfrak+\nu(\rbb_+)\big)n +
\int_{\rbb_+} x Q_n(x) \D\nu(x)
   \\
& \Ge (1 + \bfrak n + \cfrak n^2) + (\bfrak + \cfrak)
+ \big(2\cfrak+\nu(\rbb_+)\big)n + 2(1+\cfrak)
\int_{\rbb_+} Q_n \D\nu
   \\
& = \hat\lambda_n + (\bfrak + \cfrak) +
\big(2\cfrak+\nu(\rbb_+)\big)n + 2\cfrak \int_{\rbb_+}
Q_n \D\nu + \int_{\rbb_+} Q_n \D\nu
   \\
& \hspace{-1.8ex}\overset{\eqref{qkx}} \Ge
\hat\lambda_n + (\bfrak + \cfrak) +
\big(2\cfrak+\nu(\rbb_+)(1 - \cfrak)\big)n +
\nu(\rbb_+) \cfrak n^2 + \int_{\rbb_+} Q_n \D\nu
   \\
& \overset{\mathrm{(i)}} \Ge 2 \hat\lambda_n, \quad
n\in \zbb_+.
   \end{align*}
According to \eqref{mur-hupy},
$\lambda_n^2=\frac{\hat\lambda_{n+1}} {\hat\lambda_n}
\Ge 2$ for all $n\in \zbb_+$, so by
Lemma~\ref{dfawyr}(ii) with $\tau=1$, $\wlam$ is of
type III and is similar to a subnormal unilateral
weighted shift.

(ii) According to \eqref{klaud}, we have
   \allowdisplaybreaks
   \begin{align} \notag
Q_n(x) & = (n-1) + \sum_{j=1}^{n-2} (n-j-1) x^j
   \\ \notag
& \Ge (n-1) + \delta (1 + \ldots (n-2))
   \\ \label{nyrq}
& = (n-1) + \delta \frac{(n-1)(n-2))}{2}, \quad x\Ge
\delta \Ge 1, \, n \Ge 3.
   \end{align}
This, together with \eqref{rnx-0}, \eqref{wnezero} and
\eqref{nyrq} with $\delta=2+t$, implies that
   \begin{align*}
\hat\lambda_{n+1} & = (1 + \bfrak n + \cfrak n^2) +
(\bfrak + \cfrak) + \big(2\cfrak+\nu(\rbb_+)\big)n +
\int_{\rbb_+} x Q_n(x) \D\nu(x)
   \\
& \Ge (1 + \bfrak n + \cfrak n^2) + (\bfrak + \cfrak)
+ \big(2\cfrak+\nu(\rbb_+)\big)n + (2+t)\int_{\rbb_+}
Q_n \D\nu
   \\
& = \hat\lambda_n + (\bfrak + \cfrak) +
\big(2\cfrak+\nu(\rbb_+)\big)n + (1+t)\int_{\rbb_+}
Q_n \D\nu
   \\
& \Ge \hat\lambda_n + (\bfrak + \cfrak) +
\big(2\cfrak+\nu(\rbb_+)\big)n + \nu(\rbb_+) t (n-1)
   \\
& \hspace{15.3ex}+ \nu(\rbb_+) t(2+t)
\frac{(n-1)(n-2)}{2} + \int_{\rbb_+} Q_n \D\nu
   \\
& = \hat\lambda_n + \big(\bfrak + \cfrak + \nu(\rbb_+)
t (1 + t)\big) + \big(2\cfrak + \nu(\rbb_+)(1 - 2t
-\frac{3}{2}t^2)\big) n
   \\
& \hspace{15.3ex}+ \frac{\nu(\rbb_+)t(2+t)}{2} n^2 +
\int_{\rbb_+} Q_n \D\nu
   \\
& \hspace{-.5ex}\overset{\mathrm{(ii)}}\Ge 2
\hat\lambda_n, \quad n \Ge 3.
   \end{align*}
Therefore, $\lambda_n^2=\frac{\hat\lambda_{n+1}}
{\hat\lambda_n} \Ge 2$ for all integers $n\Ge 3$, so
by Lemma~\ref{dfawyr}(ii) with $\tau=1$, $\wlam$ is of
type III and is similar to a subnormal unilateral
weighted shift.

(iii) Arguing as above, we get
   \allowdisplaybreaks
   \begin{align*}
\hat\lambda_{n+1} & = (1 + \bfrak n + \cfrak n^2) +
(\bfrak + \cfrak) + \big(2\cfrak+\nu(\rbb_+)\big)n +
\int_{\rbb_+} x Q_n(x) \D\nu(x)
   \\
& \hspace{-1.5ex} \overset{\textrm{(iii-b)}}\Ge (1 +
\bfrak n + \cfrak n^2) + (\bfrak + \cfrak) +
\big(2\cfrak+\nu(\rbb_+)\big)n + (1+\tau + t)
\int_{\rbb_+} Q_n \D\nu
   \\
& = \hat\lambda_n + (\bfrak + \cfrak) +
\big(2\cfrak+\nu(\rbb_+)\big)n + t \int_{\rbb_+} Q_n
\D\nu + \tau \int_{\rbb_+} Q_n \D\nu
   \\
& \hspace{-1.8ex}\overset{\eqref{qkx}} \Ge
\hat\lambda_n + (\bfrak + \cfrak) + \big(2\cfrak +
\nu(\rbb_+)(1 - \frac{t}{2})\big)n + \nu(\rbb_+)
\frac{t}{2} n^2 + \tau \int_{\rbb_+} Q_n \D\nu
   \\
& \hspace{-1.5ex} \overset{\textrm{(iii-a)}}\Ge
(1+\tau) \hat\lambda_n, \quad n \in \zbb_+.
   \end{align*}
This together with \eqref{mur-hupy} implies that
   \begin{align} \label{tykwe}
\lambda_n^2=\frac{\hat\lambda_{n+1}} {\hat\lambda_n}
\Ge 1 + \tau, \quad n\in \zbb_+,
   \end{align}
Noting that
   \allowdisplaybreaks
   \begin{align*}
\hat\lambda_{n+1} & = (1 + \bfrak n + \cfrak n^2) +
(\bfrak + \cfrak) + \big(2\cfrak+\nu(\rbb_+)\big)n +
\int_{\rbb_+} x Q_n(x) \D\nu(x)
   \\
& \Le \hat\lambda_n + (\bfrak + \cfrak) +
\big(2\cfrak+\nu(\rbb_+)\big)n + (\vartheta -1)
\int_{\rbb_+} Q_n \D\nu
   \\
& \hspace{-1.5ex}\overset{\textrm{(iii-b)}}\Le
\vartheta \hat\lambda_n, \quad n \in \zbb_+,
   \end{align*}
we see that $\lambda_n^2 \Le \vartheta$ for every
$n\in \zbb_+$. This combined with \eqref{tykwe} shows
that $1 + \tau \Le \lambda_n^2 \Le 1 + M$ for all
$n\in \zbb_+$ with $M:=\vartheta -1$. It follows from
\mbox{(iii-b)} that $\inf{\supp{\nu}} >1$, so $M
> 0$, and $(1-\tau) (1+M)<1$. By Lemma~\ref{dfawyr}(i),
$\wlam$ is of type III and is similar to a subnormal
unilateral weighted shift.
   \end{proof}
We conclude this subsection by showing that there are
many CPD unilateral weighted shifts satisfying any of
the conditions (i)-(iii) of Theorem~\ref{ineqsuf}.
   \begin{exa}  \label{3uwre}
We discuss three cases $1^{\circ}$-$3^{\circ}$
corresponding to the conditions (i)-(iii) of
Theorem~\ref{ineqsuf}.

$\bold{1^{\circ}}$ Let us take any $b,c \in \rbb_+$
such that $b + c \Ge 1$. We will give, by considering
three cases, necessary and sufficient conditions for
the existence of $\alpha\in [1,\infty)$ satisfying the
inequality $2c + \alpha (1 - c) \Ge b$. The case $c <
1$ is obvious. If $c=1$, then such $\alpha$ exists if
and only if $b \Le 2c$. Finally, if $c>1$, then such
$\alpha$ exists if and only if $\frac{2c-b}{c-1} \Ge
1$, or equivalently $b \Le c+1$. Now, take any
compactly supported Borel measure $\nu$ on $\rbb_+$
such that $\nu(\{1\})=0$, $\nu(\rbb_+)=\alpha$ and
$\inf{\supp{\nu}} \Ge 2(1+c)$. It follows from
Theorem~\ref{wkwcpdws} that there exists a CPD
unilateral weighted shift $\wlam$ with the
representing triplet $(\bfrak, \cfrak, \nu)$ such that
$\bfrak=b$ and $\cfrak=c$ which satisfies the
condition (i) of Theorem~\ref{ineqsuf}.

$\bold{2^{\circ}}$ Let us take $b,c\in\rbb_+$. Note
that there is a unique solution $t_0\in (0,\infty)$ of
the quadratic equation $1 - 2t -\frac{3}{2}t^2=0$.
Take any $t \in (0,t_0)$. Then $1 - 2t
-\frac{3}{2}t^2> 0$. This implies that there exists
$\alpha_0 \in (0,\infty)$ such that for every $\alpha
\in [\alpha_0, \infty)$, $b + c + \alpha t (1 + t) \Ge
1$, $2 c + \alpha(1 - 2t -\frac{3}{2}t^2) \Ge b $ and
$\alpha t(2+t) \Ge 2 c$. With the parameters
$b,c,t,\alpha$ fixed, we can proceed as in the
previous paragraph to get $\wlam$ satisfying condition
(ii) of Theorem~\ref{ineqsuf} (but now we require that
$\inf{\supp{\nu}} \Ge 2+t$).

$\bold{3^{\circ}}$ First, we consider the case of
$\cfrak=0$. Let us take $\tau \in (\frac{2}{3}, 1)$.
Then $(\frac{4}{3}, 2\tau) \neq \emptyset$. Let $t \in
(\frac{4}{3}, 2\tau)$. Note that $J_1:=\big(1+
\frac{\tau}{1-\frac{t}{2}}, \frac{1}{1-\tau}\big) \neq
\emptyset$. Take $\theta \in J_1$ and note that
$J_2:=\big(\frac{2\tau^2}{2-t}, (\theta-1)\tau\big)
\neq \emptyset$. Take any $\alpha \in J_2$. We will
show that
   \begin{align} \label{trecz}
\theta > 1 + \tau + t.
   \end{align}
Indeed, since $t > \frac{4}{3}$ and $\tau >
\frac{t}{2}$, we see that $\tau + t > 2$. It is easy
to see that the latter inequality is equivalent to $1+
\frac{\tau}{1-\frac{t}{2}} > 1 + \tau + t$. This
implies \eqref{trecz}. Set $b=\tau$ and $c=0$. This
all together implies that $\theta -1
> b = \tau$  (use \eqref{trecz}), $\alpha(1 -
\frac{t}{2}) > \tau b$, $\alpha t > 0$, $\alpha <
(\theta -1) b$ and $(1-\tau)\theta < 1$. Having chosen
the parameters $b, c, t, \tau, \alpha, \theta$ and
using \eqref{trecz}, we can repeat the procedure used
in the previous two paragraphs, leading to the measure
$\nu$ with $\nu(\rbb_+)=\alpha$, $\inf{\supp{\nu}} \Ge
1+\tau+t$ and $\sup{\supp{\nu}}=\theta$, and
consequently to a CPD unilateral weighted shift
$\wlam$ satisfying condition (iii) of
Theorem~\ref{ineqsuf} with $\cfrak = 0$. Now, using
the limit argument, we conclude that if $c\in
(0,\infty)$ is small enough, then the above implies
that $\theta -1 > b+c > \tau$, $2c + \alpha(1 -
\frac{t}{2}) > \tau b$, $\alpha t > 2 \tau c$, $2c +
\alpha < (\theta -1) b$ and $(1-\tau)\theta < 1$. As a
consequence, this leads to a CPD unilateral weighted
shift $\wlam$ satisfying condition (iii) of
Theorem~\ref{ineqsuf} with $\cfrak > 0$.
   \hfill $\diamondsuit$
   \end{exa}

\appendix
\numberwithin{equation}{section}
\section{Quasi-similarity versus similarity}
   The following lemma characterizes when,
given two unilateral weighted shifts, one of
them is a quasi-affine transform of the
other. For the sake of completeness, we will
sketch the proof (see
\cite[Problem~90]{Hal82} for the case of
similarity).
   \begin{alem} \label{wpwnc}
Let $\wlam\in \ogr{\ell^2}$ and
$W_{\omegab}\in \ogr{\ell^2}$ be unilateral
weighted shifts. Then the following
conditions are equivalent{\em :}
   \begin{enumerate}
   \item[(i)] there exits $X \in \ogr{\ell^2}$
such that $\overline{\ob{X}} = \ell^2$ and
$X\wlam = W_{\omegab}X$,
   \item[(ii)]
$\wlam$ is a quasi-affine transform of
$W_{\omegab}$,
   \item[(iii)] $\sup_{n\in \zbb_+}
\frac{\hat\omega_n} {\hat\lambda_n} <
\infty$.
   \end{enumerate}
   \end{alem}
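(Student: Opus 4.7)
The plan is to argue (ii)$\Rightarrow$(i)$\Rightarrow$(iii)$\Rightarrow$(ii), so that the quasi-invertibility in (ii) is in fact automatic once some intertwiner with dense range exists. The implication (ii)$\Rightarrow$(i) is immediate from the definition of a quasi-affine transform.

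For (i)$\Rightarrow$(iii), I would exploit the cyclicity of $e_0$ for $\wlam$. Given any $X\in \ogr{\ell^2}$ with dense range satisfying $X\wlam = W_{\omegab} X$, iteration yields $X\wlam^n = W_{\omegab}^n X$, and since $\wlam^n e_0 = \sqrt{\hat\lambda_n}\, e_n$ by \eqref{mur-hupy}, one gets
   \begin{align*}
X e_n = \frac{1}{\sqrt{\hat\lambda_n}}\, W_{\omegab}^n f_0, \quad n\in \zbb_+,
   \end{align*}
where $f_0 := Xe_0$. Because $\lin\{e_n\}$ is dense in $\ell^2$, $\ob{X} = \lin\{W_{\omegab}^n f_0 : n \in \zbb_+\}$, and the density hypothesis forces $f_0$ to be a cyclic vector for $W_{\omegab}$. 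The key elementary observation (which I will verify separately) is that a vector $f_0 = \sum_k \alpha_k e_k \in \ell^2$ can be cyclic for the injective weighted shift $W_{\omegab}$ only if $\alpha_0 = \is{f_0}{e_0} \neq 0$: otherwise, $f_0$ lies in the closed $W_{\omegab}$-invariant subspace $\overline{\lin}\{e_k : k\Ge 1\}$, which then contains every $W_{\omegab}^n f_0$. Writing $W_{\omegab}^n f_0 = \alpha_0 \sqrt{\hat\omega_n}\, e_n + \sum_{k\Ge 1} \alpha_k \sqrt{\hat\omega_{n+k}/\hat\omega_k}\, e_{n+k}$ and orthogonality of the basis give $\|W_{\omegab}^n f_0\|^2 \Ge |\alpha_0|^2 \hat\omega_n$, so
   \begin{align*}
\|X\|^2 \Ge \|X e_n\|^2 \Ge \frac{|\alpha_0|^2 \hat\omega_n}{\hat\lambda_n}, \quad n\in \zbb_+,
   \end{align*}
which is (iii).

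For (iii)$\Rightarrow$(ii) I would give an explicit construction: define $X\in \ogr{\ell^2}$ by
   \begin{align*}
X e_n = \sqrt{\frac{\hat\omega_n}{\hat\lambda_n}}\, e_n, \quad n \in \zbb_+.
   \end{align*}
Condition (iii) ensures $X$ is bounded (and diagonal in the standard basis), all diagonal entries are strictly positive so $X$ is injective and its range contains each $e_n$ (hence is dense). A short computation using $\hat\lambda_{n+1} = \lambda_n^2 \hat\lambda_n$ and $\hat\omega_{n+1} = \omega_n^2 \hat\omega_n$ verifies $X\wlam e_n = W_{\omegab} X e_n$ for every $n\in \zbb_+$, so $X\wlam = W_{\omegab} X$ and $\wlam$ is a quasi-affine transform of $W_{\omegab}$.

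The main obstacle, and the only step that requires any genuine thought, is the cyclicity/nonvanishing argument at the heart of (i)$\Rightarrow$(iii): one must be sure that dense range of $X$ translates into $\is{Xe_0}{e_0} \neq 0$ rather than merely $Xe_0 \neq 0$. The invariance of $\overline{\lin}\{e_k : k \Ge m\}$ under $W_{\omegab}$ for every $m$ makes this clean: if the first $m$ coordinates of $f_0$ vanish, all of $\ob{X}$ lies in a proper closed subspace, contradicting density.
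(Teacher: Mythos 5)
Your proposal is correct and follows essentially the same route as the paper: the implication (iii)$\Rightarrow$(ii) uses the identical diagonal intertwiner $Xe_n=\sqrt{\hat\omega_n/\hat\lambda_n}\,e_n$, and your cyclic-vector argument for (i)$\Rightarrow$(iii) is a repackaging of the paper's matrix-entry computation, both resting on showing $\is{Xe_0}{e_0}\neq 0$ (via $\ob{X}\not\subseteq\{e_0\}^{\perp}$) and on the fact that the $e_n$-coefficient of $Xe_n$ equals $\sqrt{\hat\omega_n/\hat\lambda_n}\,\is{Xe_0}{e_0}$. One cosmetic slip: the displayed claim ``$\ob{X}=\lin\{W_{\omegab}^n f_0:n\in\zbb_+\}$'' should be an equality of closures, $\overline{\ob{X}}=\overline{\lin\{W_{\omegab}^n f_0:n\in\zbb_+\}}$, but the cyclicity conclusion you draw from it is unaffected.
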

   \begin{proof}
(i)$\Rightarrow$(iii) Setting
$a_{i,j}=\is{Xe_j}{e_i}$ for $i,j \in
\zbb_+$, we get
   \begin{align} \label{vczq}
\lambda_j a_{i+1,j+1} = \is{X\wlam
e_j}{e_{i+1}} = \is{W_{\omegab}X
e_j}{e_{i+1}} = \omega_i a_{i,j}, \quad
i,j\Ge 0.
   \end{align}
Similarly, we get
   \begin{align} \label{prdv}
\lambda_j a_{0,j+1} = 0, \quad j \Ge 0.
   \end{align}
Using an induction argument, we infer from
\eqref{vczq} that
   \begin{align}  \label{freq}
a_{n,n} = \sqrt{\frac{\hat \omega_n}{\hat
\lambda_n}} a_{0,0}, \quad n \Ge 0.
   \end{align}
Note that $a_{0,0} \neq 0$ (because
otherwise, by \eqref{prdv},
$\is{Xe_j}{e_0}=0$ for all $j\in \zbb_+$, so
$e_0 \perp \overline{\ob{X}}=\ell^2$, a
contradiction). Then \eqref{freq} yields
$\sup_{n\in \zbb_+} \frac{\hat
\omega_n}{\hat \lambda_n} \Le
\frac{\|X\|^2}{|a_{0,0}|^2}$.

(iii)$\Rightarrow$(ii) There exists a unique
$X\in \ogr{\ell^2}$ such that
$Xe_n=\sqrt{\frac{\hat \omega_n}{\hat
\lambda_n}} e_n$ for $n\in \zbb_+$. Clearly,
$\jd{X}=\{0\}$ and
$\overline{\ob{X}}=\ell^2$. It is easy to
see that $X\wlam = W_{\omegab}X$.

(ii)$\Rightarrow$(i) Trivial.
   \end{proof}
Now we show that for unilateral weighted
shifts the quasi-similarity and similarity
relations coincide.
   \begin{athm}
Let $\wlam\in \ogr{\ell^2}$ and
$W_{\omegab}\in \ogr{\ell^2}$. Then the
following conditions are equivalent{\em :}
   \begin{enumerate}
   \item[(i)] there exit operators $X,Y \in \ogr{\ell^2}$
with dense range such that $X\wlam =
W_{\omegab}X$ and $Y W_{\omegab}=\wlam Y$,
   \item[(ii)]
$\wlam$ and $W_{\omegab}$ are quasi-similar,
   \item[(iii)] $\wlam$ and $W_{\omegab}$ are
similar,
   \item[(iv)] $0 < \inf_{n\in
\zbb_+} \frac{\hat\omega_n} {\hat\lambda_n}$
and $\sup_{n\in \zbb_+}\frac{\hat\omega_n}
{\hat\lambda_n} < \infty$.
   \end{enumerate}
   \end{athm}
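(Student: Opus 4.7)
The plan is to reduce everything to Lemma \ref{wpwnc} and an explicit diagonal intertwiner. The implications (iii)$\Rightarrow$(ii)$\Rightarrow$(i) are immediate from the definitions (every bijective operator with dense range is a quasi-affinity). The heart of the argument is the equivalence (i)$\Leftrightarrow$(iv)$\Rightarrow$(iii).

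For (i)$\Rightarrow$(iv), I apply Lemma \ref{wpwnc} in both directions. The existence of $X\in \ogr{\ell^2}$ with dense range intertwining $X\wlam=W_{\omegab}X$ gives, via Lemma \ref{wpwnc}, that $\sup_{n\in \zbb_+}\hat\omega_n/\hat\lambda_n<\infty$. Symmetrically, the existence of $Y\in \ogr{\ell^2}$ with dense range and $YW_{\omegab}=\wlam Y$ gives $\sup_{n\in \zbb_+}\hat\lambda_n/\hat\omega_n<\infty$, which is exactly $\inf_{n\in \zbb_+}\hat\omega_n/\hat\lambda_n>0$. Together these yield (iv).

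For (iv)$\Rightarrow$(iii), I construct the intertwiner explicitly. Under (iv), set $c_n=\sqrt{\hat\omega_n/\hat\lambda_n}$; then $\{c_n\}_{n=0}^{\infty}$ is bounded and bounded away from $0$. Define $X\in \ogr{\ell^2}$ as the diagonal operator $Xe_n=c_n e_n$ relative to the standard basis. Then $X$ is bijective (its inverse is the diagonal operator with diagonal $\{1/c_n\}_{n=0}^{\infty}$, which is bounded by (iv)), and a direct check using \eqref{mur-hupy} gives
\begin{align*}
X\wlam e_n=c_{n+1}\lambda_n e_{n+1}=\omega_n c_n e_{n+1}=W_{\omegab}Xe_n, \quad n\in \zbb_+,
\end{align*}
where the middle equality uses $c_{n+1}\lambda_n=\omega_n c_n$, which follows from $\hat\omega_{n+1}/\hat\omega_n=\omega_n^2$ and the analogous identity for $\wlam$. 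Hence $X\wlam=W_{\omegab}X$ on the dense linear span of $\{e_n\}$, and therefore everywhere, proving similarity.

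The chain (iii)$\Rightarrow$(ii)$\Rightarrow$(i) is trivial: any similarity $X$ is in particular a quasi-affinity, and $Y=X^{-1}$ intertwines in the opposite direction, so (iii) implies (ii); (ii) implies (i) by the definition of quasi-similarity. I expect no real obstacle here: the nontrivial content is already packaged inside Lemma \ref{wpwnc}, and the rest is just bookkeeping with the diagonal intertwiner. The one point that deserves a line of verification is that the $X$ produced in (iv)$\Rightarrow$(iii) is genuinely bijective, which follows from the two-sided bound on $\{c_n\}_{n=0}^{\infty}$ and the inverse mapping theorem.
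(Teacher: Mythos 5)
Your proposal is correct and follows essentially the same route as the paper: (i)$\Rightarrow$(iv) by applying Lemma~\ref{wpwnc} to both intertwiners, (iv)$\Rightarrow$(iii) via the explicit diagonal operator $Xe_n=\sqrt{\hat\omega_n/\hat\lambda_n}\,e_n$ (the construction the paper cites from Halmos and from the proof of (iii)$\Rightarrow$(ii) in Lemma~\ref{wpwnc}), and the remaining implications being trivial. The diagonal intertwining computation checks out, so there is nothing to add.
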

   \begin{proof}
Implication (i)$\Rightarrow$(iv) is a direct
consequence of Lemma~\ref{wpwnc}.
Implication (iv)$\Rightarrow$(iii) is well
known (see \cite[Problem~90]{Hal82}); see
also the proof of (iii)$\Rightarrow$(ii).
Implications (iii)$\Rightarrow$(ii) and
(ii)$\Rightarrow$(i) are trivial.
   \end{proof}

   \bibliographystyle{amsalpha}
   
   \end{document}